\numberwithin{equation}{section}
\newtheorem{Theorem}{Theorem}[section]
\newtheorem*{Theorem*}{Theorem}
\newtheorem*{Corollary*}{Corollary}
\newtheorem{Lemma}[Theorem]{Lemma}
\newtheorem{Proposition}[Theorem]{Proposition}
\newtheorem*{Conjecture}{Conjecture}
\theoremstyle{definition}
\theoremstyle{remark}
\newtheorem{Remark}[Theorem]{Remark}
\newtheorem*{Remark*}{Remark}
\newcommand{\C}{\mathbb{C}}
\newcommand{\N}{\mathbb{N}}
\newcommand{\Z}{\mathbb{Z}}
\newcommand{\Q}{\mathbb{Q}}
\renewcommand{\k}{\mathbbm{k}}
\renewcommand{\Z}{\mathbb{Z}}
\newcommand{\Sb}{\mathbb{S}}
\newcommand{\g}{\mathfrak{g}}
\newcommand{\p}{\mathfrak{p}}
\renewcommand{\v}{\mathfrak{v}}
\newcommand{\gl}{\mathfrak{gl}}
\renewcommand{\c}{\mathfrak{c}}
\renewcommand{\t}{\mathfrak{t}}
\renewcommand{\l}{\mathfrak{l}}
\renewcommand{\sl}{\mathfrak{sl}}
\newcommand{\so}{\mathfrak{so}}
\renewcommand{\sp}{\mathfrak{sp}}
\newcommand{\z}{\mathfrak{z}}
\newcommand{\m}{\mathfrak{m}}
\newcommand{\n}{\mathfrak{n}}
\newcommand{\q}{\mathfrak{q}}
\newcommand{\Nc}{\mathcal{N}}
\renewcommand{\O}{\mathcal{O}}
\newcommand{\I}{\mathcal{I}}
\renewcommand{\P}{{\mathcal{P}}}
\newcommand{\F}{\mathcal{F}}
\newcommand{\ad}{\operatorname{ad}}
\newcommand{\Ad}{\operatorname{Ad}}
\newcommand{\Lie}{\operatorname{Lie}}
\newcommand{\im}{\operatorname{Im}}
\newcommand{\GL}{\operatorname{GL}}
\newcommand{\Sp}{\operatorname{Sp}}
\newcommand{\Aut}{\operatorname{Aut}}
\newcommand{\End}{\operatorname{End}}
\newcommand{\Mat}{\operatorname{Mat}}
\newcommand{\Ind}{\operatorname{Ind}}
\newcommand{\Char}{\operatorname{char}}
\newcommand{\op}{{\operatorname{op}}}
\newcommand{\col}{\operatorname{col}}
\newcommand{\row}{\operatorname{row}}
\newcommand{\refl}{\operatorname{r}}
\newcommand{\ve}{\varepsilon}
\newcommand{\ug}{\underline{\g}}
\newcommand{\ue}{\underline{e}}
\newcommand{\uchi}{\underline{\chi}}
\newcommand{\uG}{\underline{G}}
\newcommand{\uO}{\underline{\O}}
\newcommand{\hZ}{\widehat{Z}}
\renewcommand{\i}{{\bf i}}
\renewcommand{\j}{{\bf j}}
\newcommand{\PeN}{\P_\epsilon(N)}
\begin{document}
\title[Modular representations and Humphreys' conjecture]{Modular representations of Lie algebras of reductive groups and Humphreys'
conjecture}

\author{Alexander Premet and Lewis Topley}

\thanks{\nonumber{\it Mathematics Subject Classification} (2000 {\it revision}).
Primary 17B35, 17B50. Secondary 17B20.}
\address{Department of Mathematics, The University of Manchester, Oxford Road, M13 9PL, UK} 
\email{alexander.premet@manchester.ac.uk}
\address{School of Mathematics, University of Birmingham,
Edgbaston, Birmingham B15 2TT, UK}\email{L.Topley.1@bham.ac.uk} \maketitle
\begin{center}
	{\it To the memory of James E. Humphreys}
\end{center}

\begin{abstract}
\noindent Let $G$ be connected reductive algebraic group defined over an algebraically 
closed field of characteristic $p>0$ and suppose that  $p$ is a good prime for the root system of $G$, the derived subgroup of $G$ is simply connected 
and the Lie algebra $\g=\Lie(G)$ admits a non-degenerate  $(\Ad\,G)$-invariant symmetric bilinear form. Given a linear function $\chi$  on $\g$ we denote by $U_\chi(\g)$ 
the reduced enveloping algebra of $\g$ associated with $\chi$.
By the Kac--Weisfeiler conjecture (now a theorem), any irreducible $U_\chi(\g)$-module has dimension divisible by
$p^{d(\chi)}$ where $2d(\chi)$ is the dimension of the coadjoint $G$-orbit containing $\chi$. In this paper we give a positive answer to the natural question raised in the 1990s by Kac, Humphreys and the first-named author and show that any algebra
$U_\chi(\g)$ admits a module of dimension $p^{d(\chi)}$. 
\end{abstract}
\maketitle

\section{Introduction}
Let $\k$ be an algebraically closed field of characteristic $p>0$ and let $G$ be a connected reductive $\k$-group. The Lie algebra $\g=\Lie(G)$ carries a canonical $[p]$-mapping $\g\ni x\mapsto x^{[p]}\in\g$ equivariant under the adjoint action of $G$ on $\g$. In this paper we always assume that $G$ is {\it standard}, that is, the derived subgroup of $G$ is simply connected, $p$ is a good prime
for the root system of $G$, and $\g$ admits a non-degenerate $G$-invariant symmetric bilinear form. 

Let $U(\g)$ denote the universal enveloping algebra of $\g$. It is well-known that all irreducible $\g$-modules are finite-dimensional and to any irreducible $\g$-module $V$ one can attach a linear
function $\chi\in \g^*$, called the $p$-{\it character} of $V$, such that for any $x\in \g$ the element $x^p-x^{[p]}\in U(\g)$ acts on $V$ as
multiplication by $\chi(x)^p$. Conversely, any linear function on $\g$ serves as a $p$-character of an irreducible $\g$-module. This follows from the fact that for any $\chi\in\g^*$ the $\g$-modules with $p$-character $\chi$ are precisely the modules over the {\it reduced enveloping algebra} $U_\chi(\g)\,:=\,U(\g)/I_\chi$ where $I_\chi$ is the two-sided ideal of $U(\g)$ generated by all $x^p-x^{[p]}-\chi(x)^p$ with $x\in \g$. 

By the Kac--Weisfeiler conjecture \cite{KW71} first proved in
\cite{Pr95} the dimension of any $U_\chi(\g)$-module is divisible by $p^{(\dim\O(\chi))/2}$ where $\O(\chi)$ denotes the coadjoint $G$-orbit of $\chi$ (different proofs were later found in \cite{PS99} and \cite{BMR}).
Since a priori it is unclear that this $p$-divisibility constraint is sharp,  a question was raised in \cite[p.~114]{Pr95} whether for any linear function $\chi\in \g^*$ there exists an irreducible $\g$-module with $p$-character $\chi$ whose dimension equals $p^{(\dim\O(\chi))/2}$. This question was explicitly mentioned by Kac \cite{Kac} and Humphreys  \cite[p.~110]{Ha} and became known as {\it the problem of small representations} or simply as {\it Humphreys' conjecture}. In this context, a representation $\rho\colon\, U_\chi(\g)\to \End(V)$ is called {\it small} if
$\dim V=p^{(\dim\O(\chi))/2}$.

There are some long-standing problems in the characteristic-zero theory of primitive ideals which have the same flavour as the problem of small representations. Here one should mention the problem of constructing and classifying all completely prime primitive ideals of $U(\g_\C)$ with a prescribed associated variety, where $\g_\C$ is a finite-dimensional simple Lie algebra over $\C$. Experts know that such problems are much easier to tackle in type $\rm A$. So it is not surprising that the problem of small modular representations has a positive solution for $\g=\mathfrak{gl}_n(\k)$.
This has been known for some time, but very recently Goodwin and the second-named author managed to classify all such representations. Specifically, they proved that all small $\mathfrak{gl}_n(\k)$-modules with $p$-character $\chi$ can be obtained by induction from one-dimensional representations of a suitable parabolic subalgebra of $\mathfrak{gl}_n(\k)$; see \cite{GT19a}. This result can be regarded as a modular analogue of M{\oe}glin's classification of the completely prime primitive ideals of $U(\gl_n(\C))$.

We stress that in general small $U_\chi(\g)$-modules $p$-character $\chi$ are difficult to construct and even harder to classify. 
Indeed, it can be rigorously proved that if $\chi\ne 0$ is {\it rigid}  (i.e. non-induced in the sense of Lusztig--Spaltenstein) then small $U_\chi(\g)$-modules with $p$-character $\chi$ cannot be obtained by parabolic induction from proper Levi subalgebras of $\g$.
Nevertheless, the existence of small modular representations was established  in a handful of rigid cases by Kac--Radul, Chari--Pressley, Peters, Peters--Shi and Jantzen; see \cite{Kac, Ch-P, Pet, P-Shi, Ja97}.

The main result of this paper is the following.
\begin{Theorem}\label{main}
Let $G$ be a standard reductive $\k$-group and $\g=\Lie(G)$. Then for any $\chi\in \g^*$ there exists an irreducible $U_\chi(\g)$-module of dimension $p^{d(\chi)}$ where $d(\chi)$ is half the dimension of the coadjoint orbit
$G$-orbit of $\chi$.  
\end{Theorem}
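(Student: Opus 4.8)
The plan is to reduce the general statement to the case of a \emph{nilpotent} $p$-character and then use the theory of the restricted nilpotent cone together with a deformation/degeneration argument.

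\medskip

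\noindent\textbf{Step 1: Reduction to nilpotent $\chi$.} Using the non-degenerate $G$-invariant bilinear form on $\g$, I would identify $\g^*\cong\g$ and write the corresponding element of $\g$ in its Jordan decomposition $x=x_s+x_n$ with $x_s$ semisimple and $x_n$ nilpotent. Passing to the centraliser $\g_{x_s}=\Lie(C_G(x_s))$, which is again the Lie algebra of a standard reductive group (here one uses that $p$ is good and the derived group is simply connected, so that centralisers of semisimple elements are nice), one has the standard Morita-type equivalence between $U_\chi(\g)$-modules and $U_{\bar\chi}(\g_{x_s})$-modules, under which dimensions get multiplied by $p^{(\dim\g-\dim\g_{x_s})/2}$ and $d(\chi)=d(\bar\chi)+\tfrac12(\dim\g-\dim\g_{x_s})$. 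So it suffices to produce a small module for the nilpotent $p$-character $\bar\chi$ on the smaller algebra. Iterating, one is reduced to the case where $\chi$ corresponds to a nilpotent element $e\in\g$.

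\medskip

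\noindent\textbf{Step 2: The nilpotent case via finite $W$-algebras and their modular reductions.} For nilpotent $e$, the reduced enveloping algebra $U_\chi(\g)$ is Morita equivalent to the modular finite $W$-algebra $U_\chi(\g,e)$ (by the first author's earlier work on modular Slodowy slices), and a small $U_\chi(\g)$-module corresponds to a $1$-dimensional $U_\chi(\g,e)$-module. The strategy is to produce such a $1$-dimensional representation by reduction modulo $p$ from characteristic zero: one takes a $1$-dimensional representation of the \emph{complex} finite $W$-algebra $U(\g_\C,e)$ — equivalently, a completely prime primitive ideal of $U(\g_\C)$ whose associated variety is the closure of the orbit of $e$, which exist by the theory of special/Richardson orbits and the first author's results on the existence of $1$-dimensional representations of finite $W$-algebras in the non-exceptional case, extended to cover all cases — spread it out over a finitely generated $\Z$-subalgebra, and specialise at a large prime. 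The delicate point is that a $1$-dimensional representation of $U(\g_\C,e)$ reduces to a $1$-dimensional representation of $U_\chi(\g,e)$ only after possibly twisting by a character of the component group and only for $p$ in a suitable congruence class; handling \emph{all} good $p$ uniformly, including the small and exceptional cases, is what requires the bulk of the argument.

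\medskip

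\noindent\textbf{Step 3: Dealing with rigid orbits and the remaining obstructions.} The hardest part, as the introduction already signals, is the rigid nilpotent orbits: there a small module cannot come by parabolic induction, so Step 2's reduction-mod-$p$ input must itself be available, i.e.\ one needs a $1$-dimensional representation of the complex $W$-algebra attached to every rigid orbit. For the classical types this can be done by explicit constructions (building on Kac--Radul, Chari--Pressley, Peters, Peters--Shi, Jantzen and the $\gl_n$ classification of Goodwin--Topley), but for the exceptional groups the list of rigid orbits is finite and one expects a case-by-case verification — possibly computer-assisted — that each carries a $1$-dimensional $W$-algebra module whose central character reduces correctly mod $p$. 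I anticipate that the genuinely new work, and the main obstacle, lies exactly here: establishing existence of the required $1$-dimensional representations (or an admissible twist thereof) of finite $W$-algebras attached to rigid nilpotent orbits in bad-looking situations, and then checking that the mod-$p$ reduction lands in the correct block of $U_\chi(\g,e)$ for \emph{every} good prime $p$, not merely for $p\gg0$.
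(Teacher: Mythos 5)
Your overall architecture coincides with the paper's: Jordan decomposition and the Friedlander--Parshall theorem reduce to nilpotent $\chi$, parabolic induction reduces to rigid $e$, and then one uses the Morita isomorphism $U_\chi(\g)\cong\Mat_{p^{d(\chi)}}(U_\chi(\g,e))$ together with reduction modulo $p$ of one-dimensional representations of the complex finite $W$-algebra (which exist for all rigid orbits by \cite{PT14} in classical types and by \cite{GRU}, \cite{Pr14} in exceptional types). However, at the two points you yourself flag as delicate the proposal stops short of an argument, and in both cases the missing idea is substantive.

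First, reducing a characteristic-zero one-dimensional representation modulo $p$ can at best produce an ideal of codimension $1$ in the full modular $W$-algebra $U(\g,e)$, hence a one-dimensional module over $U_\eta(\g,e)$ for \emph{some} $\eta$ in the good transverse slice $\Sb_\chi$ --- not for $\eta=\chi$ itself. Your suggestion that this is repaired by twisting by a character of the component group, or by restricting $p$ to a congruence class, points in the wrong direction; no such twist or congruence condition enters. The mechanism that actually closes this gap is the contracting $\k^\times$-action \eqref{e:contractingaction} on $\Sb_\chi$ with unique fixed point $\chi$: the locus of $\eta\in\Sb_\chi$ for which $U_\eta(\g)$ admits a two-sided ideal of codimension $p^{2d(\chi)}$ is Zariski closed and stable under this action, hence contains $\chi$ (Proposition~\ref{P:reductionprop}). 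Second, spreading out over a finitely generated $\Z$-algebra and specialising at a large prime only proves the theorem for $p\gg 0$, whereas the claim is for every good $p$. Handling all good primes uniformly is the technical core of the paper: one must build an $R$-form $U(\g_R,e)$ of the finite $W$-algebra over the localisation $R$ of $\Z$ at the bad primes only, show that the PBW generators $\Theta(x_k)$ acquire no denominators outside $R$ (Propositions~\ref{ThetaR} and \ref{P:Theta-k}, which rest on the structure of $\g_R^e$ for rigid $e$: either $\g^e$ is perfect or $\g^e=\k e\oplus[\g^e,\g^e]$, with the positive part generated in low Dynkin degrees), and exhibit a two-sided ideal $I_R$ with $U(\g_R,e)=R\,1\oplus I_R$ whose reduction modulo every good $p$ still has codimension $1$, the six orbits of Table~1 requiring an extra argument with the Casimir element. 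Without these ingredients the proposal does not close.
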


The forthcoming paper of Goodwin--Topley builds in a crucial way on the results and methods of the present work and gives a partial solution to the classification of small modules in symplectic and orthogonal Lie algebras $\g=\Lie(G)$ under the assumption that $p\ne 2$. It will be shown that if $\chi$ lies in a unique sheet then any small module $U_\chi(\g)$-module is parabolically induced from a small module over a Levi subalgebra $\l$ of $\g$ such that the restriction of $\chi $ to $\l$ is rigid. For a general nilpotent $\chi\in\g^*$ there is a natural action of the component group of $G^\chi$ on the set of all isomorphism classes of irreducible $U_\chi(\g)$-modules, and it will be demonstrated that every small $U_\chi(\g)$-module fixed by this action is parabolically induced from a small $\l$-module whose $p$-character $\chi\vert_{\l}$ is rigid in $\l^*$. The latter result may be regarded as a modular analogue of the description of multiplicity-free primitive ideals of $U(\so_N(\C))$ and $U(\sp_N(\C))$ obtained in \cite{PT14}.

In order to prove Theorem~\ref{main} we first reduce the general problem to the case where the group $G$ is simple and the linear function $\chi$ is $G$-unstable in the sense of Geometric Invariant Theory. This is done by using a Morita theorem for $U_\chi(\g)$ proved by Friedlander-Parshall and some results on the structure of $\g$ obtained in \cite[2.1]{PS18} (see also \cite[Proposition~2.5]{Sasha1}). As $\g$ admits a non-degenterate $G$-invariant bilinear form we can identify a $G$-unstable $\chi\in\g^*$ with a nilpotent element $e$ of $\g$. If this element can be obtained by Lusztig--Spaltenstein induction from a proper Levi subalgebra $\l$ of $\g$ we construct a small $U_\chi(\g)$-module by parabolic induction from a small $U_{\bar{\chi}}(\l)$-module where $\bar{\chi}=\chi_{\vert\l}$; see Subsection~\ref{ss:smallrigid}. We thus reduce the problem of small representations to the case where $G$ is simple and $e$ is a nonzero rigid nilpotent element of  $\g$ (the latter means that the sheet of $\g$ containing $e$ coincides with the adjoint $G$-orbit of $e$). In particular, we may assume that $G$ is not of type $\rm A$. 

Since $p$ is a good prime for the root system of $G$, the rigid nilpotent orbits of $\g$ have the same description as in the characteristic zero case. For $G$ exceptional this is proved in \cite[Theorem~3.8]{PS18}. For classical groups over algebraically closed fields of characteristic $\ne 2$ a uniform combinatorial description of rigid nilpotent orbits was obtained by Kempken \cite{K} and Spaltenstein \cite{Sp1}. 

Next we apply some results obtained in \cite{Sasha1, Pr10} and  \cite{GT18} to identify the algebra $U_\chi(\g)$ with the matrix algebra ${\rm Mat}_{p^{d(\chi)}}(U_\chi(\g,e))$ where $$U_\chi(\g,e)\,=\,U(\g,e)/J_\chi U(\g,e)$$ is a $p$-central reduction of the modular finite $W$-algebra $U(\g,e)$; see Subsection~\ref{ss:pcentreandSkryabin} for more detail. We then use a contracting $\k^\times$-action on a good slice to the adjoint $G$-orbit of $e$ to show that if the modular finite $W$-algebra $U(\g,e)$ admits a one-dimensional representation then so does its finite-dimensional quotient
$U_\chi(\g,e)$. We thus reduce Humphreys' conjecture to proving that all modular finite $W$-algebras $U(\g,e)$ associated with nonzero rigid nilpotent elements
of $\g$ contain ideals of codimension $1$ (and hence afford  one-dimensional representations).

It was conjectured in \cite{Pr07a} that in the characteristic-zero case
all finite $W$-algebras afford one-dimensional representations. This conjecture was reduced to the rigid case in \cite{Pr10} and then confirmed for classical groups in \cite{PT14}. The case of exceptional groups was dealt with  in \cite{GRU} and \cite{Pr14}.
The arguments in \cite{PT14, Pr14} relied on important results proved by Losev in \cite{Lo3, Lo5} whilst \cite{GRU} was entirely based on computer-aided computations.
In order to make use of these results we now have to link the rigid modular finite $W$-algebras with their characteristic-zero counterparts. 

We denote by $R$ the localisation of $\Z$ at the bad primes of the root system $\Phi$ of $G$. Thus we set $R\,:=\,\Z[\frac{1}{2}]$ for $G$ of type $\rm B$, $\rm C$ or $\rm D$, $R\,:=\,\Z[\frac{1}{6}]$ for $G$ of type ${\rm G}_2$, ${\rm F}_4$, ${\rm E}_6$, ${\rm E}_7$, and $R\,:=\,\Z[\frac{1}{30}]$ for $G$ of type ${\rm E}_8$. Let $\g_\Z$ denote the Chevalley $\Z$-form of the complex Lie algebra
$\g_{\C}$ with root system $\Phi=\Phi(G)$ and set $\g_R\,:=\,\g_\Z\otimes_\Z R$. As $p$ is a good prime for $\Phi$ and the group $G$ is not of type $\rm A$ we  may assume that $\g\,=\,\g_{R}\otimes_R \k$
and $e=\tilde{e}\otimes_R 1$ for some nilpotent element $\tilde{e}\in  \g_R$. In view of \cite{Pr03} and \cite{PS18} we may assume that the GIT-unstable vectors $e$ and $\tilde{e}$ share the same optimal cocharacter $\lambda_e\colon \mathbb{G}_{\rm m}\to\, G_\Z$ where $G_{\Z}$ is a Chevalley group scheme
with root system $\Phi$. Furthermore, we may suppose that both $\tilde{e}$ and $e$ have weight $2$ with respect to $\lambda_e$. Our choice of $\tilde{e}$ is specified in Subsection~\ref{ss:char0nilp} where we invoke basic notions of the Bala--Carter theory.
By \cite[Theorem~3.8]{PS18}, if $e$ is rigid then so is $\tilde{e}$.
\begin{Theorem}\label{main1}
The following are true under the above assumptions on the rigid nilpotent elements $e\in\g$ and $\tilde{e}\in\g_R$.
\begin{enumerate} 
\item 
The finite $W$-algebra $U(\g_\C,\tilde{e})$ contains a unital $R$-subalgebra
$U(\g_R,\tilde{e})$ which is freely generated as an $R$-module by a $PBW$ basis of $U(\g_\C,\tilde{e})$. Moreover, $$U(\g_R,\tilde{e})\otimes_R \C\,\cong\,U(\g_\C, \tilde{e})\ \  
\mbox{and}\ \  
U(\g_R,\tilde{e})\otimes_R\k \,\cong\,U(\g, e)$$ as algebras over the respective fields.

\smallskip

\item   The $R$-algebra
$U(\g_R,\tilde{e})$ contains a two-sided ideal $I_R$ which is a free  $R$-module and has the property that  $U(\g_R,\tilde{e})\,=\,R\, 1\oplus I_R$.

\smallskip

\item The two-sided ideal $I_R\otimes_R\k$ of $U(\g,e)\,\cong\, U(\g_R,\tilde{e})\otimes_R\k$ has codimension $1$ in $U(\g,e)$.
\end{enumerate}
\end{Theorem}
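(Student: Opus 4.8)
The plan is to establish (1)--(3) in turn: part (1) produces the $R$-form together with its two reductions, part (2) descends a one-dimensional representation from characteristic zero after a rescaling, and part (3) is then formal. For (1), fix the common optimal cocharacter $\lambda_e\colon\mathbb{G}_{\rm m}\to G_\Z$, write $\g_R=\bigoplus_i\g_R(i)$ for the induced grading (so $\tilde e\in\g_R(2)$) and set $\chi=(\tilde e,\,\cdot\,)\in\g_R^*$. Choose an $R$-Lagrangian $\l_R\subseteq\g_R(-1)$ for the form $x,y\mapsto\chi([x,y])$, form $\m_R=\l_R\oplus\bigoplus_{i\le-2}\g_R(i)$, the generalised Gelfand--Graev module $Q_{\chi,R}=U(\g_R)/(U(\g_R)\{x-\chi(x)\colon x\in\m_R\})$ --- which is $R$-free by the PBW theorem --- and the $W$-algebra $U(\g_R,\tilde e)=Q_{\chi,R}^{\m_R}$ (the $\m_R$-invariants, with its natural algebra structure); all of these carry the Kazhdan filtration. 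Under the standing hypotheses the integral $\sl_2$- and $\lambda_e$-theory is available over $R=\Z[\tfrac1N]$ (by \cite{Pr03,PS18}), and the integral analogue of the Gan--Ginzburg/Premet description holds (cf.\ \cite{GT18}): $\gr Q_{\chi,R}$ is the coordinate ring of the $R$-scheme $\chi+\m_R^\perp$, while $\gr U(\g_R,\tilde e)\cong R[S_R]$, the coordinate ring of the Slodowy slice $S_R=\tilde e+\g_R^{\tilde f}$. Since $\g_R^{\tilde f}$ is a finitely generated free module over the principal ideal domain $R$, $R[S_R]$ is a polynomial $R$-algebra and hence $R$-free; consequently $U(\g_R,\tilde e)$ is $R$-free (its Kazhdan filtration is exhaustive, bounded below, with free quotients), and lifting polynomial generators $x_1,\dots,x_\ell$ of $R[S_R]$ to elements $\Theta_1,\dots,\Theta_\ell\in U(\g_R,\tilde e)$ of the corresponding Kazhdan degrees $n_1,\dots,n_\ell$ yields a PBW $R$-basis $\{\Theta^{\mathbf a}\}_{\mathbf a\in\Z_{\ge0}^\ell}$ of $U(\g_R,\tilde e)$, which is simultaneously a PBW basis of $U(\g_\C,\tilde e)$ and of $U(\g,e)$.

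The two base-change isomorphisms now follow by comparing associated graded algebras. The construction of $Q_\chi$ commutes with the base changes $R\to\C$ and $R\to\k$, giving $Q_{\chi,R}\otimes_R\C\cong Q_{\chi,\C}$ and $Q_{\chi,R}\otimes_R\k\cong Q_\chi$; since $U(\g_R,\tilde e)$ is a saturated $R$-submodule of $Q_{\chi,R}$ (a finite intersection of kernels of $R$-linear maps between $R$-free modules), the induced filtered algebra homomorphisms $U(\g_R,\tilde e)\otimes_R\C\to U(\g_\C,\tilde e)$ and $U(\g_R,\tilde e)\otimes_R\k\to U(\g,e)$ are injective (using $R$-freeness) and land in the respective $\m$-invariants. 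On associated graded they induce the isomorphisms $R[S_R]\otimes_R\C=\C[S_\C]=\gr U(\g_\C,\tilde e)$ and $R[S_R]\otimes_R\k=\k[S_\k]=\gr U(\g,e)$, the last equality being part of the structure theory of the modular $W$-algebra (from \cite{Sasha1,Pr10,GT18}); hence both maps are algebra isomorphisms. This proves (1).

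For (2), by \cite{PT14} for $\g$ of classical type and by \cite{GRU,Pr14} for $\g$ of exceptional type, the characteristic-zero finite $W$-algebra $U(\g_\C,\tilde e)$ attached to a nonzero rigid nilpotent element admits a one-dimensional representation, and inspection of these constructions shows the corresponding character can be taken $\Q$-valued; thus there is a $\Q$-algebra homomorphism $\phi\colon U(\g_\Q,\tilde e)\to\Q$ whose restriction to $U(\g_R,\tilde e)$ has image a finitely generated $R$-subalgebra $\phi(U(\g_R,\tilde e))=R[\tfrac1{N'}]\subseteq\Q$ for some integer $N'$. Now use the contracting $\k^\times$-action $\{\gamma_t\}$ on $U(\g_R,\tilde e)$ attached to the Kazhdan grading --- written over $R$, it sends $\Theta_i$ to $t^{n_i}\Theta_i$ plus terms of strictly smaller Kazhdan degree carrying lower powers of $t$, with all coefficients in $U(\g_R,\tilde e)$ --- so that each $\phi\circ\gamma_t(\Theta_i)$ is a polynomial in $t$ with coefficients in $R[\tfrac1{N'}]$; choosing an integer $t$ divisible by sufficiently high powers of the primes of $N'$ clears all denominators, and $\eta:=\phi\circ\gamma_t$ then restricts to an $R$-algebra homomorphism $U(\g_R,\tilde e)\to R$. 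Put $I_R:=\ker\eta$. It is a two-sided ideal, $\eta$ is surjective because $\eta(1)=1$, and from $u-\eta(u)\,1\in I_R$ for every $u$ together with $R\cdot1\cap I_R=0$ we obtain $U(\g_R,\tilde e)=R\cdot1\oplus I_R$ as $R$-modules. Being a submodule of the free $R$-module $U(\g_R,\tilde e)$ over the principal ideal domain $R$, $I_R$ is $R$-free. This is (2).

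Part (3) is immediate: applying $-\otimes_R\k$ to the decomposition $U(\g_R,\tilde e)=R\cdot1\oplus I_R$ and invoking the isomorphism $U(\g_R,\tilde e)\otimes_R\k\cong U(\g,e)$ from (1), one gets $U(\g,e)=\k\cdot1\oplus(I_R\otimes_R\k)$, so $I_R\otimes_R\k$ is a two-sided ideal of codimension $1$ in $U(\g,e)$ and $U(\g,e)/(I_R\otimes_R\k)\cong\k$. The principal difficulty in this programme is the base-change isomorphism $U(\g_R,\tilde e)\otimes_R\k\cong U(\g,e)$ of (1): one has to know that, for the prescribed good primes, the modular finite $W$-algebra is genuinely the reduction mod $p$ of its characteristic-zero counterpart, which rests on the integral $\sl_2$/$\lambda_e$-theory (so that inverting only the bad primes suffices), on the integral Gan--Ginzburg/Premet model for $\gr Q_\chi$, and on the compatibility of the Kazhdan filtration with reduction mod $p$ --- the latter being salvageable precisely because $U(\g_R,\tilde e)$ and all its filtration steps are $R$-free. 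A secondary point is the rationality of the one-dimensional characters of $U(\g_\C,\tilde e)$ used in (2); absent a manifestly rational construction, one would descend instead through the ring of integers of a number field and a prime above $p$.
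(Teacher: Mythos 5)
Your outline reproduces the right skeleton (an $R$-form of the generalised Gelfand--Graev module, a PBW $R$-basis, a one-dimensional representation, then base change), and your part (3) is indeed the same formal deduction the paper makes. But both of the substantive steps contain genuine gaps.

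In part (1) you assert that ``the integral analogue of the Gan--Ginzburg/Premet description holds'', i.e.\ that $\gr U(\g_R,\tilde e)\cong R[S_R]$, and you deduce from this that lifts $\Theta_1,\dots,\Theta_\ell$ with prescribed symbols exist \emph{inside} $U(\g_R,\tilde e)$. That is precisely the point at issue: taking $\ad\,\m$-invariants does not commute with base change, so knowing $\gr U(\g_\C,\tilde e)\cong\C[S_\C]$ and $\gr U(\g_\k,e)\cong\k[S_\k]$ for each good $p$ separately does not give surjectivity of $U(\g_R,\tilde e)\otimes_R\k\to U(\g_\k,e)$, nor does it rule out denominators in the coefficients $\lambda^k_{\i,\j}$ of the characteristic-zero generators $\Theta(x_k)$. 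This is exactly why the paper states the general case only as a Conjecture and remarks that one would have to check that the Gan--Ginzburg arguments go through over $R$. The actual proof is forced to use rigidity of $\tilde e$: Proposition~\ref{ThetaR} shows by a reduction-modulo-$p$ and $\t_{e}$-weight argument that $\Theta(x_k)\in Q_R$ whenever $n_k\le\ell-2$, and Propositions~\ref{P:R-derived} and \ref{P:Theta-k} then propagate integrality to all generators using the fact that for rigid $e$ the positive part of $\g_R^e$ is generated by its low-degree components. Your argument never invokes rigidity, which is a sign that it proves too much.

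In part (2) the denominator-clearing device does not exist: the Kazhdan structure on $U(\g_R,\tilde e)$ is a filtration, not a grading, and there is no algebra automorphism $\gamma_t$ with $\gamma_t(\Theta_i)=t^{n_i}\Theta_i+(\text{lower order})$ --- the map $x\mapsto t^{i+2}x$ on $\g(i)$ fails to respect the bracket by a factor $t^2$ (this is why one only gets an automorphism of the Rees-type family $U_{\chi,t^2}(\g)[t,t^{-1}]$, permuting its fibres rather than acting on one of them). Even granting such an action, a nonzero constant term of $\phi(\gamma_t(\Theta_i))$ with a denominator cannot be cleared by any choice of $t$. The paper's route is different and uses the structure of $\g_R^e$ again: when $\tilde e$ is not in Table~1 the centraliser is perfect, so the two-sided ideal generated by commutators already has codimension one over $R$ (Proposition~\ref{P:idealI1}); in the six remaining cases one must adjoin $\Theta(e)-\tfrac{q-c}{2}$, and the integrality of $q$ is extracted from the explicit multiplicity-free primitive ideal $\Ann_{U(\g_\C)}L(\lambda)$ of \cite{Pr14} together with Losev's multiplicity formula (Proposition~\ref{P:idealI2}). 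Your closing remark about descending through a ring of integers addresses rationality of the character but not integrality at the good primes, which is the actual difficulty.
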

To be more precise, for any nilpotent element $\tilde{e}$ as above we construct
an $R$-form $Q_R$ of the generalised Gelfand--Graev module $Q_\C$ associated with $\tilde{e}$ and then show, for $\tilde{e}$ rigid, that the endomorphism ring $U(\g_R,\tilde{e})\,:=\,{\rm End}_{\g_R}(Q_R)^{\rm op}$ has a nice PBW basis.
Our proof of Theorem~\ref{main1} relies on some structural properties of the centraliser $\g^e$ established in \cite{Ya10}, \cite{PT14} and \cite{PS18}. In particular, we use the fact that either $\g^e=[\g^e,\g^e]$ or $\g^e=\k e\oplus [\g^e,\g^e]$ and the latter occurs for the six rigid orbits listed in Table~1.
As a consequence, we show that in all rigid cases the small $\g$-modules with $p$-character
$\chi$ are separated by the action of a Casimir element of $U(\g)$. 
When $\g^e=[\g^e,\g^e]$ the Lie algebra $\g$ affords a unique small module with $p$-character $\chi$.

\begin{Conjecture}
We conjecture that Theorem~\ref{main1} holds for all nilpotent elements  $e\in\g$. 
\end{Conjecture}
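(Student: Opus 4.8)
The plan is to carry out the argument behind Theorem~\ref{main1} for an arbitrary nilpotent element $\tilde e\in\g_R$, dispensing with the places where the dichotomy $\g^e=[\g^e,\g^e]$ or $\g^e=\k e\oplus[\g^e,\g^e]$ is used. One ingredient is already available in full generality, namely the existence of a one-dimensional representation of the complex finite $W$-algebra $U(\g_\C,\tilde e)$ for every nilpotent $\tilde e$: this is the conjecture of \cite{Pr07a}, proved for classical $\g$ in \cite{PT14} and for exceptional $\g$ in \cite{GRU,Pr14} by means of \cite{Lo3,Lo5}. So the task divides into (i) producing the $R$-form $U(\g_R,\tilde e)$ with a $PBW$ basis and the base-change isomorphisms of part (1), and (ii) descending a suitable augmentation ideal of $U(\g_\C,\tilde e)$ to an $R$-form $I_R$ fulfilling parts (2) and (3).

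For (i) I would follow Subsection~\ref{ss:char0nilp}: fix the optimal cocharacter $\lambda_{\tilde e}\colon\mathbb{G}_{\rm m}\to G_\Z$, take a good grading $\g_R=\bigoplus_i\g_R(i)$ over $R$ (the Dynkin grading attached to an $\sl_2$-triple through $\tilde e$ will do), and, when this grading has nonzero odd part, choose a Lagrangian $R$-submodule $\l_R\subseteq\g_R(-1)$ for the alternating form $(x,y)\mapsto\langle\tilde e,[x,y]\rangle$; such an $\l_R$ exists because the form is $R$-valued and non-degenerate on the $R$-free module $\g_R(-1)$ and $R$ is a principal ideal domain. Setting $\m_R:=\l_R\oplus\bigoplus_{i\le-2}\g_R(i)$, let $Q_R:=U(\g_R)\otimes_{U(\m_R)}R_\chi$ and $U(\g_R,\tilde e):={\rm End}_{\g_R}(Q_R)^{\rm op}$; the isomorphisms $Q_R\otimes_R\C\cong Q_\C$ and $Q_R\otimes_R\k\cong Q_\chi$ are formal. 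It then remains to show that $U(\g_R,\tilde e)$ is $R$-free with the expected $PBW$ basis. Concretely, choose a basis $x_1,\dots,x_r$ of $\g^{\tilde e}_R$ adapted to the $\lambda_{\tilde e}$-grading; Premet's inductive recipe (cf.\ \cite{Pr07a}) produces each generator $\Theta_{x_j}$ as the image of $x_j$ in $Q_R$ corrected by a term of lower Kazhdan degree, the correction being the solution of an explicit inhomogeneous linear system over the ground ring, and the recipe uses only the data $(\lambda_{\tilde e},\g_R(i),\l_R,x_i)$, which is defined over $R$ and specialises to the data used over $\C$ in \cite{Pr07a} and over $\k$ in \cite{GT18,Sasha1}. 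Granting that these systems are solvable over $R$, the $\Theta_{x_j}$ lie in $U(\g_R,\tilde e)$, and their ordered monomials form an $R$-basis: the quotient of each Kazhdan-filtration piece $F_dU(\g_R,\tilde e)$ — a finitely generated free $R$-module — by the span of the relevant monomials is a finitely generated torsion $R$-module with trivial reduction at every residue characteristic of $R$ (by the $PBW$ theorem over $\C$ and the modular $PBW$ theorem), hence is zero.

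The main obstacle is exactly the solvability over $R$ of the systems defining the $\Theta_{x_j}$, i.e.\ the assertion that ${\rm End}_{\g_R}(Q_R)^{\rm op}$ is as large as its specialisations and $R$-free. Over $\Q$ the systems are solvable by \cite{Pr07a}, and modulo every good prime by \cite{GT18,Sasha1}, but this controls the denominators of the solutions only up to a single power of each bad prime, whereas the Smith normal forms of the systems could a priori involve higher powers. In the rigid case the paper sidesteps this by using that $\g^{\tilde e}$ is perfect (or perfect up to $\k\tilde e$), which makes the relevant systems unimodular. For general $\tilde e$ — in the extreme, $\tilde e$ regular and $\g^{\tilde e}$ abelian of rank $\ell$ — one must bound these denominators directly, and I expect this to need an integral refinement of the structure theory of $U(\g_\C,\tilde e)$: one should rewrite the systems in terms of the $R$-module $\g^{\tilde e}_R$ and the centraliser filtrations of \cite{Ya10,PT14,PS18} and show that their determinant ideals are supported on the bad primes (equivalently, that the base-change obstructions for ${\rm End}_{\g_R}(Q_R)$ vanish over $R$). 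This case-free replacement for the perfectness of $\g^{\tilde e}$ is, in my view, the real difficulty. Step (ii) is comparatively soft once (i) is in hand: the one-dimensional representations of $U(\g_\C,\tilde e)$ are the $\C$-points of $\Specm$ of its abelianisation, which by the works cited above is non-empty with a rational structure governed by the component group of $G^{\tilde e}$; one picks a character $\eta$ taking values in $R$ on the $\Theta_{x_j}$, puts $I_R:=\ker\eta\cap U(\g_R,\tilde e)$ so that $U(\g_R,\tilde e)=R\,1\oplus I_R$, and since $R$ is a $PID$ the $R$-module $I_R$ is free and the decomposition is preserved by $-\otimes_R\k$, yielding parts (2) and (3). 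Thus the conjecture reduces to the integral denominator bound in (i) together with the existence of an $R$-integral one-dimensional character in (ii), the former being the crux.
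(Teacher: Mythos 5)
The statement you are asked to prove is stated in the paper as an open \emph{conjecture}: the authors give no proof, they only remark that a proof would likely require checking that the Gan--Ginzburg arguments of \cite{GG02} go through over the ring $R$, and that the only case currently known is $\gl_n$, via \cite{BK} and \cite[3.4]{GT19b}. Your proposal does not close this gap; it is a plan that, by your own admission, leaves the decisive step unproved. Concretely, the crux you isolate --- that the generators $\Theta(x_j)$ can be chosen in $Q_R^{\ad\,\m_R}$, equivalently that ${\rm End}_{\g_R}(Q_R)^{\rm op}$ is $R$-free on the expected PBW monomials --- is exactly the point where the paper's rigid-case argument is not case-free: Propositions~\ref{ThetaR} and \ref{P:Theta-k} control the denominators of the $\lambda^k_{\i,\j}$ only by combining the low-degree statement (Kazhdan degree $\le \ell$) with the structural input of Proposition~\ref{P:R-derived}, namely that for rigid $e$ the Lie ring $\g_R^e$ is perfect (or perfect up to $Re$) and its positive part is generated in degrees $\le \ell-2$, so that every higher $\Theta(x_k)$ is reconstructed from commutators of lower ones. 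For a general nilpotent $e$ (already for $e$ regular, where $\g^e$ is abelian) this inductive mechanism is unavailable, and no substitute bound on the determinant ideals of your linear systems is offered; asserting that one "should" be able to show these ideals are supported at bad primes is precisely the conjecture restated, not an argument.

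Your step (ii) also contains an unproved assumption: even granting (i), you need a one-dimensional character of $U(\g_\C,\tilde e)$ taking values in $R$ on the chosen PBW generators. In the rigid case the paper extracts the constants $c_k\in R$ from the same reduction process (Propositions~\ref{P:idealI1} and \ref{P:idealI2}, the latter using the Casimir eigenvalue $q\in R$ coming from \cite{Pr14} and Losev's multiplicity formula); for general $e$ the $\C$-points of the abelianisation of $U(\g_\C,\tilde e)$ form a positive-dimensional variety and the existence of an $R$-rational point on it is not automatic and is not argued. So the proposal correctly identifies where the difficulty lies (and in that sense is consistent with the paper's own remark that one must redo \cite{GG02} integrally, or proceed as in type $\rm A$ via even good gradings and shifted Yangians), but it does not constitute a proof of the conjecture.
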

It seems likely that in order to prove this conjecture one would have to check that the arguments used by Gan--Ginzburg in \cite{GG02} go through over the ring $R$. It follows from the results of \cite{BK} and \cite[3.4]{GT19b}
that the conjecture does hold for $\gl_n$.

If $\tilde{e}$ is rigid and special in the sense of Lusztig then  \cite[Theorem~B]{Pr14} implies that the central character of the unique small $U_\chi(\g)$-module corresponds to the orbit of the Arthur--Barbasch--Vogan weight associated with $e$ under the action of the Weyl group on the lattice of weights with a subsequent reduction modulo $p$; see Remark~\ref{rem} for more detail.
When $G$ is an exceptional group and $\tilde{e}$ is rigid and non-special in the sense of Lusztig one can use computations in \cite{Pr14} to determine the central characters of small $\g$-modules with $p$-character $\chi$.\bigskip

\bigskip
\noindent {\bf Acknowledgement.} Part of this work was done in Spring 2018 when the first author was in residence at MSRI (Berkeley). He would like to thank the Institute for the hospitality and support. The work of the second author is funded by the UKRI Future Leaders fellowship program, grant number MR/S032657/1.
%

\section{Preliminaries and recollections}

\subsection{Basic notation}
\label{ss:notationanddefinitions}

In this paper $\Z_+$ will always denote the non-negative integers and $\k$ will denote an algebraically closed field of characteristic $p > 0$. 
Unless otherwise stated an algebraic group will be an affine algebraic group over $\k$. These will be denoted by capital Roman letters 
whilst their Lie algebras will be denoted by the corresponding German scripts. 

The Lie algebra $\g = \Lie(G)$, where $G$ is an algebraic $\k$-group, comes equipped with a canonical $p$-th power operation $\g\ni x \mapsto x^{[p]}\in \g$ equivariant under the adjoint action of $G$ on $\g$.  The {\it nilpotent cone}
$\Nc(\g)$ of $\g$ consists of all $x\in \g$ such that $x^{[p]^r}=0$ for all $r\gg 0$. The adjoint $G$-orbits contained in $\Nc(\g)$ are known as the {\it nilpotent orbits} and they will feature heavily in our discussions.

Given a rational $G$-module $V$ and a vector $v\in V$ we write $G^v$ for the stabiliser of $v$ in $G$. When $x\in \g$ (resp. $\chi\in\g^*$) the Lie algebra $\Lie(G^x)$ (resp. $\Lie(G^\chi)$) is contained in the centraliser $\g^x:=\c_\g(x)$ (resp. in the stabiliser $\g^\chi:=\{y\in \g\,|\,\, \chi([y,\g])=0\}$).


The universal enveloping algebra of $\g$ is denoted by $U(\g)$. 
The the {\it $p$-centre} $Z_p(\g)$ of $U(\g)$ is the unital subalgebra of $U(\g)$ generated by all elements $x^p - x^{[p]}$ with $x\in\g$. It is well-known that $Z_p(\g)$ is contained in the centre of $U(\g)$ and identifies as a $G$-algebra with $\k[(\g^*)^{(1)}]$ where $(\g^*)^{(1)}$ denotes the first Frobenius twist of the coadjoint $G$-module. It follows that the maximal spectrum $Z_p(\g)$ identifies with $(\g^*)^{(1)}$ and, since $\k$ is algebraically closed, the latter can be naturally identified with $\g^*$. Consequently, every $\chi \in \g^*$ leads to a maximal ideal $I_\chi$ of $Z_p(\g)$ and, to make this precise, we have $$I_\chi = \langle x^p - x^{[p]} - \chi(x)^p\mid x\in \g\rangle.$$ The {\it reduced enveloping algebra with $p$-character $\chi$} is the quotient $U_\chi(\g) := U(\g) / I_\chi U(\g)$. By the PBW theorem,  $U(\g)$ is a free module of rank $p^{\dim \g}$ over the $p$-centre $Z_p(\g)$. Therefore, every irreducible representation of $U(\g)$ factors through precisely one of the quotients $U(\g) \twoheadrightarrow U_\chi(\g)$.

\subsection{The standard hypotheses}
\label{ss:standardhypotheses}

We consider a connected reductive algebraic group $G$ over $\k$ satisfying the standard hypotheses:
\begin{enumerate}
\item[(H1)] the derived subgroup of $G$ is simply connected;
\item[(H2)] $p$ is either zero or a good prime for $G$;
\item[(H3)] $\g$ admits a non-degenerate $(\Ad\,G)$-invariant form which we denote $\kappa : \g \times \g \to \k$.
\end{enumerate}
Such groups will be referred as {\it standard} throughout this paper. In this Subsection we recall some useful results on standard reductive groups
which can be found in \cite{SS70}, \cite{Ja98}, \cite{Pr03} and \cite{PS99}. 
Probably the most important result for us is the equality
$\g^x=\Lie(G^x)$ which holds for every $x\in \g$. If $G_1,\ldots, G_s$ are the simple components of the algebraic group $G$, then the Lie algebra $\g$ decomposes into a direct sum of $G$-stable ideals 
\begin{equation}\label{directsum}
\g\,=\,\tilde{\g}_1\oplus\cdots\oplus\tilde{\g}_s\oplus \z,
\end{equation} where
$\z\subseteq \z(\g)$ and $\Lie(G_i)$ is an ideal of codimension $\le 1$ in $\tilde{\g}_i$ for all $i\le s$. More precisely, $\Lie(G_i)=\tilde{\g}_i$ unless $G_i$ is of type ${\rm A}_{rp-1}$ for some $r\in\N$ in which case $\Lie(G_i)\cong \mathfrak{sl}_{rp}(\k)$ and $\tilde{\g}_i\cong \mathfrak{gl}_{rp}(\k)$; see
\cite[2.1]{PS99} for more detail.

 We use the invariant form $\kappa$ to identify the $G$-modules $\g$ and $\g^*$. For any $\chi\in\g^*$ there exists a unique $x\in\g$ such that $\chi=\kappa(x,-)$. The Lie algebras of the Levi subgroups of $G$ are referred to as  the {\it Levi subalgebras} of $\g$. 
If $G$ is standard then so is any Levi subgroup $\uG$ of $G$.
If $x=x_s+x_n$ is the Jordan--Chevalley decomposition of $x$ in the restricted Lie algebra $\g$ then the centraliser $\g^{x_s}$ is Levi subalgebra of $\g$ containing $x$, and $\g^x=(\g^{x_s})^x$.

For the rest of the Subsection we outline how the properties of reductive groups can be used to study nilpotent orbits via reduction modulo $p$.
Let $\mathcal{D}G$ be the derived subgroup of $G$ and 
let $G'_\C$ be a semisimple simply-connected algebraic group over $\C$ whose root system equals that of $G$. It is well-known that $\Nc(\g)\subset\g'$ where
$\g'=\Lie(\mathcal{D}G)$.
The nilpotent part $x_n$ of any $x\in \g$  lies in $\Nc(\g)$.
The number of $G$-orbits in $\Nc(\g)$ is finite (see \cite[\textsection 2]{Ja04} for example) and it follows from \cite{Pr03} that the Dynkin and Bala--Carter classifications of nilpotent orbits work uniformly in the case where $G$ is standard. The uniformity means that the nilpotent orbits of $\g$ are parametrised by the weighted Dynkin diagrams 
of the complex counterpart $\g'_{\mathbb C}=\Lie(G'_\C)$ of $\g'$ in such a way that the nilpotent orbit $\O(\Delta)\subset\Nc(\g)$
with weighted Dynkin diagram $\Delta$ has a nice representative
obtained by base-changing a nilpotent element $e_\Delta$ contained in a Chevalley $\Z$-form $\g'_{\Z}$ of $\g'_{\C}$. The element $e_\Delta\in\g'_\Z$
has several useful properties described in Subsections~\ref{ss:char0nilp} and \ref{ss:RformsQ}. In particular,
it
lies in the adjoint $G'_{\C}$-orbit with weighted Dynkin diagram $\Delta$ and
the dimension of 
that orbit equals $\dim_\k\O(\Delta)$.  The existence of $e_\Delta$ is justified in Subsection~\ref{ss:char0nilp} where we apply \cite[2.6]{Pr03} 
to reduce the general case to the case where $e$ is distinguished. For distinguished nilpotent elements, the existence of $e_\Delta$ is checked   case-by-case by using the tables in \cite{LT2} (for $G$ exceptional) and our description in Subsection~\ref{ss:nilpotentsforclassical} (for $G$ classical).


The results mentioned above enable us to reduce proving Humphreys' conjecture to the case of rigid nilpotent elements in Levi subalgebras of $\g$. In order to solve the problem in those cases 
we introduce, for $e$ rigid, certain $R$-forms of  finite $W$-algebras $U(\g'_\C,e_\Delta)$
and then make use of the results on multiplicity-free primitive ideals
obtained in \cite{Pr14} and \cite{PT14}. Here 
$$R=\Z\big[\textstyle{\frac{1}{p}}\,|\  p\ \mbox{is bad for}\ G\big].$$ In order to construct $R$-forms of rigid finite $W$-algebras we rely in a crucial way
on the structure of the modular Lie algebra $\g^e$ where $e$ is the image of $e_\Delta$ in $\g'\,\cong\, \g'_\Z\otimes_{\Z}\k$.

\subsection{Lusztig--Spaltenstein induction}
\label{ss:LSinduction}
We now recall the theory of induced nilpotent orbits, first developed in \cite{LS79} and generalised to the hypotheses of the current paper in \cite{PS18}. Choose a parabolic subalgebra $\p \subseteq \g$ with Levi factor $\ug \subseteq \p$. Write $\p = \ug \oplus \n$ where $\n$ denotes the nilradical and let $\uO$ be an orbit in $\Nc(\ug)$. The set $\uO + \n$ is contained in $\Nc(\g)$ and so there is a unique nilpotent orbit in $\Nc(\g)$ which intersects this set densely; it is denoted $\Ind_{\ug}^\g(\uO)$ and is referred to as {\it the nilpotent orbit induced from $\uO$}. Crucially, the induced orbit does not depend upon the choice of parabolic subalgebra, only upon the $G$-orbit of the pair $(\ug, \uO)$, which we refer to as {\it the induction data}. The most important feature of this construction for our purposes is that is the following:
\begin{Lemma}
\label{L:inductionproperties}
Let $\ug$ be a Levi subalgebra of $\g$ and let $\uO \subseteq \ug$ be a nilpotent $\uG$-orbit. Then $$\dim \Ind_{\ug}^\g(\uO)
\,=\,\dim \uO+2\dim\n.$$
\end{Lemma}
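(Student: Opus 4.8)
The plan is to establish the dimension formula for the induced nilpotent orbit by a standard transversality-and-density argument, working with the fibration $G \times^P (\uO + \n) \to \g$.

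\smallskip

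\emph{Step 1: Set up the collapsing map.} Fix a parabolic subalgebra $\p = \ug \oplus \n$ with Levi factor $\ug$ and let $P \subseteq G$ be the corresponding parabolic subgroup. Consider the variety $X := G \times^P (\overline{\uO} + \n)$, where $P$ acts on $\overline{\uO} + \n$ via the adjoint action (note $\overline{\uO} + \n$ is $P$-stable because $\uO$ is a $\uG$-orbit and $[\ug, \n] \subseteq \n$, $[\n,\n]\subseteq\n$). There is a projective morphism $\mu\colon X \to \g$, $\mu(g * y) = \Ad(g)\,y$, whose image $\overline{G\cdot(\uO + \n)}$ is the closure of a single nilpotent orbit by the definition recalled before the lemma; call that orbit $\Ind_\ug^\g(\uO)$. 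Since $X$ is irreducible of dimension $\dim G - \dim P + \dim \overline{\uO} + \dim \n = \dim G/P + \dim \uO + \dim \n$, and $\mu$ has image of dimension $\dim \Ind_\ug^\g(\uO)$, we get the inequality $\dim \Ind_\ug^\g(\uO) \le \dim G/P + \dim \uO + \dim \n$; using $\dim G/P = \dim \n$ this reads $\dim \Ind_\ug^\g(\uO) \le \dim\uO + 2\dim\n$.

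\smallskip

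\emph{Step 2: Generic finiteness of $\mu$.} The heart of the argument is to show $\mu$ is generically finite onto its image, which forces equality of dimensions. I would do this by a centraliser-dimension count at a representative $e \in \uO$ of the induced orbit sitting inside $\uO + \n$. The key point (which holds under the standard hypotheses by $\g^x = \Lie(G^x)$, and was already used in \cite{PS18}) is that one can choose $e \in \uO + \n$ lying in the open induced orbit such that $\g^e \subseteq \p$, equivalently $\g^e = \p^e$. Granting this, the fibre $\mu^{-1}(e)$ is, up to the $G^e$-action, governed by $\{g \in G : \Ad(g)^{-1} e \in \overline{\uO}+\n\}/P$, and the dimension of the generic fibre equals $\dim G^e - \dim(G^e \cap P) + (\text{fibre dimension inside } \overline{\uO}+\n)$. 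The condition $\g^e \subseteq \p$ gives $\dim G^e = \dim(G^e\cap P)$, and one checks that the component of the fibre through the chosen point is zero-dimensional, so $\mu$ is generically finite and $\dim\Ind_\ug^\g(\uO) = \dim X = \dim\uO + 2\dim\n$.

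\smallskip

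\emph{Step 3: The main obstacle.} The delicate part is Step 2 — specifically producing a point $e$ in the dense induced orbit inside $\uO + \n$ with $\g^e \subseteq \p$. In characteristic zero this is Lusztig--Spaltenstein's original argument via $\dim \g^e$; under the present hypotheses one must invoke the good-prime structure theory from \cite{PS18} (where Lusztig--Spaltenstein induction is set up), in particular that $\dim_\k \g^x$ is constant along the adjoint orbit and equals $\dim \g - \dim G\cdot x$, and that the relevant nilpotent orbits reduce well modulo $p$ as recalled in Subsection~\ref{ss:standardhypotheses}. Once the containment $\g^e \subseteq \p$ is in hand, the dimension bookkeeping is routine. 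An alternative, if one prefers to avoid explicit slice constructions, is to quote directly from \cite{PS18} that induction in the modular setting satisfies the same dimension formula as in characteristic zero, reducing the lemma to the classical statement in \cite{LS79}; but I would present the self-contained collapsing-map proof above, flagging the one nontrivial input $\g^e\subseteq\p$ as the place where the standard hypotheses enter.
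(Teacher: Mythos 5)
The paper offers no proof of this lemma at all: the sentence immediately preceding it attributes the theory of induced orbits to \cite{LS79} in characteristic zero and to \cite{PS18} under the standard hypotheses, and the lemma is simply quoted. So your fallback option at the end — reduce to \cite{PS18} — is exactly what the authors do, and is the safe route. Your self-contained alternative follows the standard collapsing-map strategy, and Step 1 (the inequality $\dim \Ind_{\ug}^{\g}(\uO)\le \dim\uO+2\dim\n$ from $\dim X=\dim G/P+\dim\uO+\dim\n$) is complete and correct.

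The gap is in Step 2, and it is larger than the one you flag. You correctly identify that producing $e$ in the dense orbit of $\uO+\n$ with $\g^e\subseteq\p$ is nontrivial; but even granting that containment, the remaining "bookkeeping" is not routine. Generic finiteness of $\mu$ is equivalent to $\dim\g^e=\dim\ug^{\ue}$ (the generic fibre has dimension $\dim\g^e-\dim\ug^{\ue}$), and $\g^e\subseteq\p$ only gives $\g^e=\p^e$; one still must show $\dim\p^e=\dim\ug^{\ue}$, which amounts to the separate, comparably hard fact that $P\cdot e$ is dense in $\uO+\n$. The naive bound coming from projecting $\p^e$ to $\ug$ only yields $\dim\p^e\le\dim\ug^{\ue}+\dim(\n\cap\g^e)$, and $\n\cap\g^e$ is typically nonzero: for $\uO=0$, $\p$ a Borel and $e$ regular nilpotent one has $\g^e\subseteq\n$, so that projection carries no information. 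Moreover, the fibre-dimension count $\dim X=\dim\mu(X)+\dim\mu^{-1}(e)$ requires \emph{every} component of the generic fibre to be finite, whereas your argument only inspects the component through the base point $eP$. These are precisely the points where the Lusztig--Spaltenstein argument and its good-characteristic version in \cite{PS18} do real work, so as written the self-contained sketch does not close; cite \cite[\S2]{PS18} (or \cite{LS79} together with the reduction mod $p$ statements there) for the lower bound.
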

Nilpotent $G$-orbits which cannot be induced are called {\it rigid} and such orbits play a central role in our proof of the existence of small modules. One of the main results of the first author and David Stewart in \cite{PS18} implies that the classification of rigid orbits does not depend upon the good characteristic.

\begin{Proposition}
	\label{P:reductionrigid}
	Let $G_0$ be an arbitrary simple algebraic $\k$-group satisfying (H1), (H2), (H3) and let $\g_0=\Lie(G_0)$. If $U_\xi(\g_0)$ admits small representations for every nonzero rigid nilpotent $\xi\in \g_0^*$, then 
	$U_\chi(\g)$ admits small representations for every $\chi \in \g^*$.
\end{Proposition}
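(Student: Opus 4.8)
The plan is to combine three ingredients: the reduction of the Kac--Weisfeiler $p$-divisibility machinery to a product situation, the Morita/Friedlander--Parshall reduction of $U_\chi(\g)$ to a twisted version with $\chi$ replaced by its ``nilpotent part'', and finally the structural decomposition \eqref{directsum} to pass from a standard reductive group to its simple components. I would organise the argument in the reverse order of these reductions.

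\emph{Step 1: reduce to nilpotent $\chi$.} Given $\chi\in\g^*$, use $\kappa$ to identify $\chi$ with $x\in\g$, and write $x=x_s+x_n$ for its Jordan--Chevalley decomposition in the restricted Lie algebra $\g$. Then $\g^{x_s}=:\ug$ is a Levi subalgebra of $\g$ (standard again by the remarks in Subsection~\ref{ss:standardhypotheses}), and $\chi$ restricts to an element $\bar\chi$ of $\ug^*$ whose corresponding element is $x\in\ug$ with nilpotent part $x_n$. The classical theorem of Friedlander--Parshall provides a Morita equivalence between $U_\chi(\g)$ and $U_{\bar\chi}(\ug)$ (more precisely, $U_\chi(\g)$ is a matrix algebra over $U_{\bar\chi}(\ug)$ of size $p^{\dim\n}$ where $\p=\ug\oplus\n$ is a parabolic with Levi $\ug$). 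Since $\dim\O(\chi)=\dim\O(\bar\chi)+2\dim\n$ by the dimension bookkeeping analogous to Lemma~\ref{L:inductionproperties} (the coadjoint orbit of $\chi$ meeting $\bar\chi+\n^\perp$ densely), an irreducible $U_{\bar\chi}(\ug)$-module of dimension $p^{d(\bar\chi)}$ yields, after tensoring up through the matrix algebra, an irreducible $U_\chi(\g)$-module of dimension $p^{d(\bar\chi)}\cdot p^{\dim\n}=p^{d(\chi)}$. So it suffices to produce small modules for nilpotent $p$-characters on standard Levi subalgebras of $\g$; since any Levi of $\g$ is again standard, we may simply assume $\chi$ itself is nilpotent, i.e. corresponds to $e\in\Nc(\g)$.

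\emph{Step 2: reduce to simple $G$ and then to rigid $e$.} Apply the decomposition \eqref{directsum}: $\g=\tilde\g_1\oplus\cdots\oplus\tilde\g_s\oplus\z$, with each $\tilde\g_i$ either $\Lie(G_i)$ or differing from it by a one-dimensional centre (the $\gl_{rp}$ versus $\sl_{rp}$ phenomenon). Nilpotent elements live in $\g'=\bigoplus\tilde\g_i$, so $e=\sum e_i$ with $e_i\in\tilde\g_i$, the reduced enveloping algebra factors as a tensor product $U_\chi(\g)\cong\bigotimes_i U_{\chi_i}(\tilde\g_i)\otimes U_0(\z)$, and $d(\chi)=\sum_i d(\chi_i)$; an external tensor product of small modules is small. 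The central factor $U_0(\z)$ contributes only a one-dimensional twist (since $\z$ is abelian and toral-plus-central, $\chi|_\z$ nilpotent forces the relevant orbit to be a point), so it is harmless. For the factors $\tilde\g_i\cong\gl_{rp}$ one either cites the known $\gl_n$ case (Goodwin--Topley, \cite{GT19a}) or notes that $\gl_{rp}=\sl_{rp}\oplus(\text{centre})$ and handles $\sl_{rp}$ as a simple case; for the factors $\tilde\g_i=\Lie(G_i)$ with $G_i$ simple, we are exactly in the situation of the hypothesis of the proposition (a simple group satisfying (H1)--(H3)) once we further reduce $\chi_i$ to rigid type. That last reduction is the Lusztig--Spaltenstein induction step: if $e_i$ is induced from $(\ug_i,\uO_i)$ with $\ug_i$ a proper Levi, then by the construction in Subsection~\ref{ss:smallrigid} (parabolic induction of a small $U_{\bar\chi}(\l)$-module, using Lemma~\ref{L:inductionproperties} to match dimensions) a small module over $\ug_i$ produces a small module over $\tilde\g_i$; iterating, we land on rigid nilpotent elements in Levi subalgebras, which are covered by the hypothesis applied to the simple factors of those Levis. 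Assembling the small modules over all factors by external tensor product gives the desired small $U_\chi(\g)$-module.

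\emph{Main obstacle.} The conceptual content of the proposition is entirely in the two reduction mechanisms, so no single step is deep; the part requiring the most care is the precise dimension bookkeeping across the Morita equivalence and the parabolic induction, i.e. verifying that the exponents $d(\chi)$ add up correctly under (a) passing to the Levi $\g^{x_s}$, (b) Lusztig--Spaltenstein induction via Lemma~\ref{L:inductionproperties}, and (c) the tensor decomposition along \eqref{directsum}, and simultaneously checking that ``irreducible of the right dimension'' is preserved (not merely ``some module of the right dimension''). The one genuinely external input is that the $\gl_{rp}$ (equivalently $\sl_{rp}$, type ${\rm A}$) factors admit small modules, which is the already-cited result of \cite{GT19a}; with that in hand the proposition follows by the above bookkeeping, reducing everything to nonzero rigid nilpotent $p$-characters on simple groups satisfying (H1)--(H3), as claimed.
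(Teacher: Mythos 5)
Your proposal is correct and follows essentially the same route as the paper: Jordan decomposition plus the Friedlander--Parshall Morita theorem to reduce to nilpotent $p$-characters, parabolic (Lusztig--Spaltenstein) induction to reduce to rigid ones, and the decomposition \eqref{directsum} to pass to simple factors. The only cosmetic difference is that the paper performs the induction step \emph{before} splitting into simple factors, so that once $e$ is rigid the type ${\rm A}$ components automatically carry $e_i=0$ (handled by a codimension-one ideal of $U_0(\ug)$), and no appeal to \cite{GT19a} is needed.
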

\begin{proof}
Pick $\chi \in \g^*$. The form $\kappa$ arising from (H3) in Subsection~\ref{ss:standardhypotheses} induces a $G$-equivariant isomorphism $\bar \kappa: \g\to\g^*$ and thus we have a Jordan decomposition $\chi = \chi_{\operatorname{s}} + \chi_{\operatorname{n}}$ into the semisimple and nilpotent parts. The centraliser $\g_{\chi_{\operatorname{s}}}$ is a Levi subalgebra of $\g$ and it is immediate from \cite[Theorem~3.2]{FP80} and  \cite[Proposition~2.5]{Sasha1} that small $U_\chi(\g)$-modules exist if and only if small $U_{\chi_{\operatorname{n}}}(\g^{\chi_{\operatorname{s}}})$-modules exist. The fact that Levi subalgebras of $G$ satisfy the standard hypotheses allows us to reduce the existence of small modules for $U_\chi(\g)$  to the case where $\chi_{\operatorname{s}}$ vanishes on the derived subalgebra of $\g$. In view of \cite[B.9]{JaLA04} we may reduce further to the case where $\chi_{\operatorname{s}} = 0$. Applying \cite[Lemma~2.10]{Ja04} and the Hilbert--Mumford Criterion we may assume from now on that $\chi$ is $G$-unstable and has the form $\chi=\kappa(e,-)$ for some nilpotent element $e\in \g$.

If $U_\chi(\g)$ admits a small module then so too does $U_\psi(\g)$ for all $\psi \in G\cdot \chi$ \cite[A.8]{JaLA04}, and so it suffices to confirm the existence of small modules for a representative $\chi$ of each nilpotent orbit.
Now suppose that $\O\subseteq \g$ is an induced nilpotent orbit. Let $\ug \subseteq \g$ be a Levi subalgebra and $\uO \subseteq \ug$ be a nilpotent orbit such that $\O = \Ind_{\ug}^\g(\uO)$. Choose a parabolic subalgebra $\p \subseteq \g$ which has $\ug$ as a Levi factor, and write $\n$ for the nilradical. Let $\ue \in \uO$ and pick $\ue + n \in (\uO + \n) \cap \O$. We write $\uchi = \kappa(\ue, -)|_{\ug} \in \ug^*$ and $\chi = \kappa(e, -) \in \g^*$. Observe that $\chi|_{\ug} = \uchi$ and $\chi(\n) = 0$.
This leads to a natural embedding of algebras $U_{\uchi}(\ug) \hookrightarrow U_\chi(\g)$, through which we may inflate $U_{\uchi}(\g)$-modules to $U_\chi(\p)$-modules. If $V$ is a small $U_{\uchi}(\ug)$-module then the induced $U_\chi(\g)$-module
$$\widetilde{V}\,:=\,U_\chi(\g)\otimes_{U_\chi(\p)}\,V$$ has dimension $p^{\dim \n+d(\uchi)}$ where 
$d(\uchi)=(\dim\uO)/2$. Applying
Lemma~\ref{L:inductionproperties} we deduce that $\widetilde{V}$ is a small $U_{\chi}(\g)$-module. 
Since any Levi subgroup $\uG$ satisfies the standard hypotheses and $U_0(\ug)$ clearly contains ideals of codimension $1$,
our remarks earlier in the proof show that in order to prove the proposition we may assume without loss of generality that $\ug=\g$ and  
$\uchi=\chi=\kappa(e,-)$ for some nonzero rigid nilpotent element $e\in\g$.  

By (\ref{directsum}), we have that $e=e_1+\cdots+e_s+z$ for some $e_i\in\tilde{\g}_i$ and $z\in\z$. Since all direct summands of (\ref{directsum}) are $G$-invariant and $0\in \overline{G\cdot e}$ it must be that $z=0$ and $0\in\overline{G_i\cdot e_i}$ for all $i$. Since $e$ is rigid in $\g$ and $\{0\}$ is the only rigid unstable orbit in $\gl_n(\k)$ it is straightforward to see that $e_i\ne 0$ only if $\tilde{\g}_i=\g_i$ is not of type $\rm A$. If each such $\g_i$ admits a small module with $p$-character
$\chi_i=\kappa(e_i,-)_{\vert\g_i}$ the the reduced enveloping algebra 
$$U_\chi(\g)\,\cong\,U_\chi(\tilde{\g}_1)\otimes\cdots\otimes U_\chi(\tilde{\g}_s)\otimes U_\chi(\z)$$ admits a module of dimension $p^{(\dim G\cdot \chi)/2}$. This complete the proof.
\end{proof}


Proposition~\ref{P:reductionrigid} reduces the problem of small representations to the case where $G$ is a simple algebraic $\k$-group of type other than $\rm A$, $p$ is a good prime for $G$, and $\chi=\kappa(e,-)$ for some nonzero rigid nilpotent element $e\in \g$.
If $G$ is of type {\rm B}, {\rm C} or {\rm D} then the rigid orbits in $\g$ were first classified by Kempken \cite{K} and Spaltenstein \cite{Sp} in combinatorial terms. Their precise description in terms of partitions is recalled in \cite[Theorem~7]{PT14} and remains valid over algebraically closed fields of characteristic $\ne 2$.
The description of rigid nilpotent orbits in exceptional complex Lie algebras $\g_\C$ can be found in \cite{deGE09} where the authors also show, in the form of tables, how the nilpotent orbits are distributed amongst the sheets of $\g_\C$. It is proved in \cite{PS18} that the results of \cite{deGE09} remain valid in good positive characteristic.

\subsection{Generalised Gelfand--Graev modules and finite $W$-algebras}
\label{ss:finiteWalgebras}
Until the end of this Section we assume that $G$, $\g$ and $\kappa$ are as in Subsection \ref{ss:standardhypotheses}. In order to ease notation we occasionally allow $\k$ to have zero characteristic in which case we set $\k=\C$.
Pick any nilpotent element $e \in \g$ and write $\chi := \kappa(e,-)$ for the associated element of $\g^*$. Thanks to \cite[Theorem~A]{Pr03} there exists a cocharacter $\lambda_e : \k^\times \to G$ which induces a grading $\g = \bigoplus_{i\in \Z}\, \g(i)$ satisfying:
\begin{enumerate}
\setlength{\itemsep}{4pt}
\item[(i)] $e \in \g(2)$;
\item[(ii)] $\g^e \subseteq \bigoplus_{i\ge 0} \g(i)$.
\end{enumerate}
A grading satisfying (i) and (ii) is known as a {\it good grading for $e$}. Such gradings were classified over $\C$ in \cite[Theorem~20]{BG07} and under the standard hypotheses in \cite[\textsection 3]{GT18}. Choose a maximal torus $T \subseteq G$ containing $\lambda_e(\k^\times)$ so that the graded pieces $\g(i)$ are spanned by root vectors for $T$. Property (ii) ensures that $\ad\, e$ induces a bijection $\g(-1) \to \g(1)$. This, in turn, implies that the form $\Psi : \g(-1) \times \g(-1) \to \k$ given by $(x,y)\mapsto \chi([x,y])$ is skew-symmetric and non-degenerate. The following fact was first observed in \cite{BG07} over the complex numbers, and generalised for groups satisfying the standard hypotheses in \cite[\textsection 4.1]{GT18}.
\begin{Lemma}
\label{L:Tstablelagrangian}
There exists a $T$-stable subspace $\g(-1)_0 \subseteq \g(-1)$ which is Lagrangian with respect to $\Psi$.
\end{Lemma}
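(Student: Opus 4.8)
The plan is to exhibit the required Lagrangian as a $T$-stable subspace by decomposing $\g(-1)$ into $T$-weight spaces and pairing them up under the form $\Psi$. First I would recall that $T$ acts on $\g(-1)$ and decompose it as $\g(-1) = \bigoplus_{\alpha} \g(-1)_\alpha$, the sum running over those $T$-weights $\alpha$ occurring in $\g(-1)$. Since $\chi = \kappa(e,-)$ and $e \in \g(2)$ is fixed by $\lambda_e(\k^\times)$ but transforms under $T$ with some weight, the form $\Psi(x,y) = \chi([x,y])$ can only be nonzero on a pair $\g(-1)_\alpha \times \g(-1)_\beta$ when $\alpha + \beta$ equals the $T$-weight of $e$ (or, dualising, the weight of $\chi$). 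Call this weight $\gamma$. So the weights occurring in $\g(-1)$ come in pairs $\{\alpha, \gamma - \alpha\}$, and $\Psi$ restricts to a perfect pairing between $\g(-1)_\alpha$ and $\g(-1)_{\gamma - \alpha}$ whenever both are nonzero, and is identically zero on $\g(-1)_\alpha \times \g(-1)_\alpha$ unless $2\alpha = \gamma$.

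Next I would treat the two kinds of weight spaces separately. For a pair of distinct weights $\alpha \ne \gamma - \alpha$, I select one representative from each pair and put the whole weight space $\g(-1)_\alpha$ into $\g(-1)_0$; the complementary weight space $\g(-1)_{\gamma-\alpha}$ is then omitted. This contributes an isotropic, $T$-stable subspace whose dimension is exactly half of $\dim(\g(-1)_\alpha \oplus \g(-1)_{\gamma-\alpha})$, because the pairing between the two pieces is nondegenerate. The potentially troublesome case is the "self-paired" weight space with $2\alpha = \gamma$, where $\Psi$ restricts to a \emph{nondegenerate skew-symmetric} form on $\g(-1)_\alpha$ itself (nondegeneracy following from nondegeneracy of $\Psi$ on the whole space together with the weight decomposition). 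On such a space I would just choose a Lagrangian subspace with respect to that symplectic form; since $T$ acts by a single scalar character on $\g(-1)_\alpha$, \emph{every} linear subspace of it is automatically $T$-stable, so this choice costs nothing in terms of equivariance.

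Assembling: define $\g(-1)_0$ to be the direct sum of the chosen weight spaces $\g(-1)_\alpha$ (for $\alpha$ in the selected set of representatives of distinct pairs) together with the chosen Lagrangian inside each self-paired weight space. By construction this is $T$-stable. It is isotropic: a product $\Psi(x,y)$ with $x \in \g(-1)_\alpha$, $y \in \g(-1)_\beta$ both among the chosen pieces vanishes unless $\alpha + \beta = \gamma$, but we never chose both members of a distinct pair, and on a self-paired piece we chose an isotropic subspace. Finally, counting dimensions weight-space by weight-space shows $\dim \g(-1)_0 = \tfrac12 \dim \g(-1)$, so an isotropic subspace of half-dimension is Lagrangian.

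The main obstacle — really the only subtlety — is verifying that $\Psi$ respects the $T$-weight grading in the precise sense used above, i.e. that $\Psi(\g(-1)_\alpha, \g(-1)_\beta) = 0$ unless $\alpha + \beta$ equals the weight of $\chi$. This is the place where one genuinely uses that $\chi$ is a $T$-weight vector in $\g^*$ (equivalently that $e$ is a $T$-eigenvector in $\g$, which one arranges by the choice of $T$ containing $\lambda_e(\k^\times)$ with $\g(i)$ spanned by root vectors). One should also check that nondegeneracy of $\Psi$ globally forces nondegeneracy of the pairing on each pair of dual weight spaces and on each self-paired space — this is a standard orthogonality argument: the perp of $\bigoplus_{\beta \ne \gamma - \alpha}\g(-1)_\beta$ inside $\g(-1)$ is $\g(-1)_\alpha$ by weight considerations, so the radical of $\Psi|_{\g(-1)_\alpha \oplus \g(-1)_{\gamma-\alpha}}$ is contained in the radical of $\Psi$, which is zero. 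Everything else is bookkeeping.
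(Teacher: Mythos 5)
The paper itself does not prove this lemma but quotes it from \cite{BG07} and \cite[\textsection 4.1]{GT18}, so I am judging your argument on its own terms. It has a genuine gap at the step you yourself flag as the crux. Your whole reduction rests on the claim that $\Psi(\g(-1)_\alpha,\g(-1)_\beta)=0$ unless $\alpha+\beta$ equals one fixed weight $\gamma$, which you justify by saying that $e$ is a $T$-eigenvector, "arranged by the choice of $T$ containing $\lambda_e(\k^\times)$". That premise is false and cannot be arranged: containing $\lambda_e(\k^\times)$ only makes $e$ an eigenvector for that one-dimensional subtorus. A nonzero $T$-eigenvector of nonzero weight lies in a single root space, so $e$ would have to be a root vector; but the representatives in question are sums $e=\sum_i c_i e_{\gamma_i}$ over several distinct roots $\gamma_i$ of $\lambda_e$-weight $2$ (already $e_{13}+e_{24}\in\gl_4$ is not an eigenvector of any maximal torus, and the same holds for essentially all the elements built in Subsection~\ref{ss:nilpotentsforclassical}). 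Hence $\chi=\kappa(e,-)$ is supported on $\bigoplus_i\g_{-\gamma_i}$ and the true orthogonality statement is only $\Psi(\g_\alpha,\g_\beta)\ne0\Rightarrow\alpha+\beta\in\{-\gamma_1,\dots,-\gamma_k\}$. The weights of $\g(-1)$ then do not break into well-defined pairs $\{\alpha,\gamma-\alpha\}$: one weight may pair nontrivially with several others, the resulting incidence structure need not be a perfect matching, and "select one representative from each pair" is undefined. Since each $\g(-1)_\alpha$ is a one-dimensional root space you also cannot retreat to choosing half of a weight space, and a graph in which every vertex has a neighbour need not contain an independent set of size exactly half. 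So the existence of the Lagrangian is exactly what remains to be proved.

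The standard repair replaces the single weight $\gamma$ by a torus on which \emph{all} the $\gamma_i$ vanish, i.e.\ a subtorus of $T$ centralising $e$, so that $\chi$ has weight $0$ for it and $\Psi$ can only pair weight spaces of opposite weights. The centraliser of such a torus (taken maximal in $G^e\cap C_G(\lambda_e)$, after conjugating $e$ inside $\g(2)$ if necessary) is a Levi subalgebra $\l$ in which $e$ is distinguished, whence $\l(-1)=0$ and no weight of $\g(-1)$ restricts to zero; splitting the weights by the sign of their value on a generic cocharacter of that torus produces two complementary, totally isotropic, $T$-stable subspaces, each forced by non-degeneracy to have half the dimension. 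This is the argument of \cite[\textsection 4.1]{GT18}, and it is visible in the present paper in Subsection~\ref{ss:RformsQ}, where $\g_R(-1)=\n_{-,R}(-1)\oplus\n_{+,R}(-1)$ with both summands totally isotropic because $\kappa$ vanishes on $\n_{\pm,R}$ and $e$ lies in $\l_R$ with $\l_R(-1)=0$. Your isotropy and dimension-count bookkeeping at the end is fine once the correct splitting device is in place.
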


We fix $\g(-1)_0$ in accordance with Lemma~\ref{L:Tstablelagrangian}, set $\m := \g(-1)_0 \oplus \bigoplus_{i<-1} \g(i)$, and let $P$ be the parabolic subgroup of $G$ with Lie algebra $\bigoplus_{i\le 0}\g(i)$.
Since $\m$ is spanned by $T$-root spaces there exists a connected unipotent algebraic subgroup $M$ of $G$ with $\Lie(M)\,=\,\m$. Moreover, it is explained in \cite[4.2]{GT18} that  $M$ is generated by unipotent root subgroups contained in $R_u(P)$.

Make the notation $\m_\chi := \{x - \chi(x) \mid x\in \m\} \subseteq U(\g)$.
Since $\chi$ vanishes on $\bigoplus_{i\le -3}\,\g(i)$ and $[\m, \m] \subseteq \ker\,\chi$, the preceding remark entails that $\Ad\, M$ preserves the left ideal $U(\g)\m_\chi$ of $U(\g)$ and acts on the quotient  $U(\g)/U(\g)\m_\chi$. From these ingredients we may construct the {\it generalised Gelfand--Graev module} and the {\it finite $W$-algebra}
\begin{eqnarray*}
Q &:=& U(\g)/U(\g)\m_\chi;\\
U(\g,e) &:=& Q^{\Ad\,M}.
\end{eqnarray*}
The subquotient $U(\g,e)$ of $U(\g)$ inherits the structure of an associative $\k$-algebra via $\overline{u} \cdot \overline{v}\, =\, \overline{uv}$ where $\overline{x}$ denotes the coset $x + U(\g) \m_\chi$ of $x\in U(\g,e)$.

When ${\rm char}(\k)=0$ the algebra $U(\g,e)$
quantises the transverse Poisson structure on the Slodowy slice to the adjoint orbit of $e$, which can also be defined by Hamiltonian reduction; see \cite{GG02}. In this case, we also have that $Q^{\Ad\,M} = \,Q^{\ad\,\m}$ which allows us to identify $U(\g,e)$ with $\End_\g(Q)^{\op}$ as associative algebras (via Frobenius reciprocity).

When ${\rm char}(\k)=p>0$ one can define a {\it good transverse slice} to the adjoint orbit of $e$ and show that $U(\g,e)$ quantises its Poisson structure obtained by Hamiltonian reduction, parallel to the characteristic zero case. However, $U(\g,e)$ no longer identifies with $\End_\g(Q)^\op$, but rather with the subalgebra of $M$-invariants; see \cite[Lemma~4.4]{GT18}.

\subsection{The Poincar{\'e}--Brikhoff-Witt theorem}
\label{ss:PBWtheorem}
The PBW theorem for $U(\g,e)$ was first proven for $\k=\C$ in \cite[Theorem~4.6]{Sasha1}, over fields of large positive characteristic in \cite[Lemma~2.1]{Pr10}, and under the standard hypotheses in \cite[Theorem~7.3]{GT18}. Before we record the result we require a little more notation.

The theory of symplectic vector spaces entails that $\g(-1)_0$ admits a Lagrangian complement inside $\g(-1)$, say $\g(-1) = \g(-1)_0 \oplus \g(-1)_1$. We choose a basis $z'_1,\ldots,z'_s$ for $\g(-1)_0$ and a dual basis $z_1,\ldots,z_s$ for $\g(-1)_1$ satisfying
\begin{eqnarray}
\label{e:Lagrangiandualbases}
\Psi(z_i', z_j) = \delta_{i,j}.
\end{eqnarray}
Thanks to property (i) of the good grading on $\g$, the centraliser $\g^e$ is a graded subspace of $\g$. Suppose that $x_1,\ldots,x_r \in \g^e$ is a homogeneous basis for $\g^e$. Thanks to property (ii) of the good grading we can extend this to a homogeneous basis $x_1,\ldots,x_m$ of $\bigoplus_{i\ge 0} \g(i)$. Write $n_i$ for the graded degree of $x_i$. When $(\i, \j) \in \Z_+^m \times \Z_+^s$ we make the notation $x^\i z^\j = x_1^{i_1}\cdots x_m^{i_m} z_1^{j_1}\cdots z_s^{j_s}$ for the PBW monomial, viewed as an element of $Q$. The direct sum decomposition $$\g = \m \oplus \g(-1)_1 \oplus (\bigoplus_{i\ge 0} \g(i))$$
implies that $Q$ is spanned by $\{x^\i z^\j \mid (\i, \j ) \in \Z_+^m \times \Z_+^s\}$, thanks to the classical PBW theorem for $U(\g)$. We define a $\Z_+$-filtration $Q\, = \,\bigcup_{d \in \Z_+} \F_d\, Q$ by placing $\g(d)$ in degree $d+2$ and write $|(\i, \j)|_e$ for the filtration degree of 
the PBW monomial $x^\i z^\j\in Q$. Clearly,
\begin{eqnarray}
\label{e:PBWdegreemonomials}
|(\i, \j)|_e\, =\,\sum_{k=1}^m i_k(n_k + 2) +  \sum_{k=1}^s j_k.
\end{eqnarray}
This filtration on $Q$ induces a non-negative, connected filtration $U(\g,e) = \bigcup_{d\ge 0} \F_d U(\g,e)$ known as the {\it Kazhdan filtration}. 

Given a multi-index ${\bf k}\in\Z_+^d$ we write $|{\bf k}|$ for the total degree of $\bf k$. 
\begin{Lemma}
\label{L:PBWtheorem}
There exist unique elements $\Theta(x_1),\ldots,\Theta(x_r) \in U(\g,e)$ which satisfy
\begin{eqnarray}
\label{e:PBWKazhdanterm}
\Theta(x_k) =\, x_k + \sum_{|(\i, \j)|_e\, \le\, n_k + 2,\ |{\bf i}|+|{\bf j}|\ge 2} \lambda^k_{\i, \j} x^\i z^\j
\end{eqnarray}
where $\lambda^k_{\i,\j} = 0$ whenever ${\bf j}={\bf 0}$  and 
$i_{r+1}=\cdots= i_m=0$.  Furthermore, if $\Theta(x_1),\ldots,\Theta(x_r)$ are elements of $U(\g,e)$ satisfying \eqref{e:PBWKazhdanterm} for certain $\lambda_{\i,\j}^k$ then $U(\g,e)$ admits a basis consisting of the ordered monomials
\begin{eqnarray}
\label{e:PBWbasis}
\{\Theta(x_1)^{i_1}\cdots \Theta(x_r)^{i_r}\,|\,\, (i_1,\ldots,i_r) \in \Z_+^r\}.
\end{eqnarray}
\end{Lemma}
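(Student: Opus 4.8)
The plan is to establish Lemma~\ref{L:PBWtheorem} in two stages: first the existence and uniqueness of the distinguished elements $\Theta(x_k)$, and then the fact that their ordered monomials form a basis of $U(\g,e)$. Throughout I would work with the Kazhdan filtration $\F_\bullet Q$ and the induced filtration on $U(\g,e) = Q^{\Ad M}$, exploiting that $\Ad M$ acts by filtered automorphisms (since $\m$ sits in strictly negative degrees, the group $M$ raises Kazhdan degree). A crucial input is that the associated graded $\gr Q$ is, as a graded vector space, the polynomial algebra on $\g(-1)_1 \oplus \bigoplus_{i\ge 0}\g(i)$, and that taking $\ad\m$-invariants (equivalently $M$-invariants, which I would justify by a standard argument that the relevant $M$-representation on each Kazhdan-degree piece is a finite-dimensional rational representation of a unipotent group, hence the invariants compute correctly) of $\gr Q$ yields the polynomial algebra $\k[x_1,\dots,x_r]$ on the centraliser basis. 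This is the graded skeleton on which everything hangs.

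For the existence of $\Theta(x_k)$: I would construct it by downward induction on Kazhdan degree, starting from the naive coset $\bar x_k \in Q$ (which lies in filtration degree $n_k+2$) and correcting it step by step. At each stage the obstruction to $\Ad M$-invariance of a partial lift lives in a graded piece of $\gr Q$ of Kazhdan degree $\le n_k+2$; since $\gr$ of the $M$-invariants is surjective onto the $M$-invariants of $\gr Q$ (this is where the rational unipotent representation theory is used — cohomology vanishing, or just that $\bigl(\gr Q\bigr)^M = \gr\bigl(Q^M\bigr)$ in each degree), one can subtract a correction term that is itself a polynomial in $x^\i z^\j$ of strictly lower leading behaviour, and the process terminates because the filtration is bounded below in each internal degree. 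The normalisation conditions ``$\lambda^k_{\i,\j}=0$ when $\j=\mathbf 0$ and $i_{r+1}=\cdots=i_m=0$'' are imposed to pin down the lift uniquely: I would show that the difference of two lifts satisfying \eqref{e:PBWKazhdanterm} is an $M$-invariant element of $\F_{n_k+1}Q$ whose leading term, by the normalisation, would have to be a pure polynomial in $x_1,\dots,x_r$ of lower degree that is forced to vanish, and then iterate down the filtration to conclude the difference is zero.

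For the basis statement: given any family $\Theta(x_1),\dots,\Theta(x_r)$ satisfying \eqref{e:PBWKazhdanterm} (not necessarily the canonical one), I would pass to associated graded. By \eqref{e:PBWKazhdanterm} the symbol of $\Theta(x_k)$ in $\gr_{n_k+2} U(\g,e) \subseteq \gr Q$ is exactly $x_k$ (the correction terms all have strictly smaller filtration degree or are ruled out). Hence the symbol of the ordered monomial $\Theta(x_1)^{i_1}\cdots\Theta(x_r)^{i_r}$ is the commutative monomial $x_1^{i_1}\cdots x_r^{i_r}$, and since these commutative monomials form a $\k$-basis of $\bigl(\gr Q\bigr)^M = \k[x_1,\dots,x_r]$, a routine filtered-to-graded argument (a linear combination of the $\Theta$-monomials that is zero would have a nonzero leading symbol, contradiction; spanning follows by the same degree induction) gives that the $\Theta$-monomials form a basis of $U(\g,e)$.

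The main obstacle, and the step deserving the most care, is the identification $\bigl(\gr Q\bigr)^{\Ad M} = \gr\bigl(Q^{\Ad M}\bigr)$ together with the computation that this invariant ring is precisely the polynomial algebra on $x_1,\dots,x_r$. In characteristic zero this is standard (the $\m$-action and an $\mathfrak{sl}_2$-triple do the job), but under the standard hypotheses one must argue via the $T$-stable Lagrangian of Lemma~\ref{L:Tstablelagrangian}, the fact that $M$ is generated by root subgroups inside $R_u(P)$, and the good-grading properties (i), (ii); here I would lean directly on \cite[Theorem~7.3]{GT18} and \cite[Lemma~4.4]{GT18}, which is exactly the content being quoted, so in the write-up this lemma can largely be cited and the argument above reduced to recording how the normalisation \eqref{e:PBWKazhdanterm} pins down the $\Theta(x_k)$ uniquely.
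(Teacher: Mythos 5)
Your overall strategy coincides with the paper's: both the existence of PBW generators and the basis statement are ultimately delegated to \cite[Theorem~4.5]{Sasha1} in characteristic zero and \cite[Theorem~7.3]{GT18} under the standard hypotheses, after which the only new content is that the normalisation ($\lambda^k_{\i,\j}=0$ when $\j=\mathbf{0}$ and $i_{r+1}=\cdots=i_m=0$) can be achieved and forces uniqueness. The paper arranges the normalisation by subtracting from the cited generators suitable multiples of products $\Theta(x_1)^{i_1}\cdots\Theta(x_r)^{i_r}$ with $|\i|\ge 2$ (so as to stay inside $U(\g,e)$) and checking termination, and obtains uniqueness from the fact --- the formula on p.~27 of \cite{Sasha1}, still valid in characteristic $p$ --- that a nonzero element of $U(\g,e)$ must have a PBW term with $\j=\mathbf{0}$ and $i_{r+1}=\cdots=i_m=0$; your uniqueness sketch rests on the same mechanism.

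One step of your argument is wrong as written: you assert that the correction terms in \eqref{e:PBWKazhdanterm} ``all have strictly smaller filtration degree'', so that the Kazhdan symbol of $\Theta(x_k)$ is exactly $x_k$ and the symbol of an ordered monomial is $x_1^{i_1}\cdots x_r^{i_r}$. The sum in \eqref{e:PBWKazhdanterm} runs over $|(\i,\j)|_e\le n_k+2$, so correction terms of Kazhdan degree \emph{equal} to $n_k+2$ are permitted and genuinely occur: for $x\in\g^e(0)$ the term $-\tfrac{1}{2}\sum_i z_i[x,z_i']$ of \eqref{e:zerogenerators} has Kazhdan degree $2=n_k+2$. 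What singles out $x_k$ among the top Kazhdan-degree terms is that it is the unique one of total degree $|\i|+|\j|=1$; the linear-independence and spanning arguments therefore require the refinement by total degree (the statement $\Lambda^{\max}(h)\subseteq\Lambda^0_n(h)$ of \cite[Lemma~4.5]{Sasha1}, which the paper invokes repeatedly later), not the single Kazhdan grading. Since you fall back on citing \cite[Theorem~7.3]{GT18} for the basis statement in any case, this does not sink the proposal, but the self-contained symbol argument you sketch would fail without that correction.
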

\begin{proof}
When ${\rm char}(\k)=0$ we can take for $\Theta(x_i),\ldots,\Theta(x_r)$ the generators
of $U(\g,e)$ described in \cite[Theorem~4.5(i)]{Sasha1}. 
The uniqueness of such generators follows from the fact that the monomials
$\{\Theta^\i\,|\,\,\i\in\Z_+^r\}$ form a PBW basis of $U(\g,e)$; see \cite[Theorem~4.5(ii)]{Sasha1}. Indeed, if $\Theta(x_i)'$ is another generator satisfying (\ref{e:PBWKazhdanterm}) then $\Theta(x_i)'-\Theta(x_i)\in U(\g,e)$ has no terms supported on
$x_1,\ldots,x_r$. It is easy to see that this contradicts the formula displayed on \cite[p.~27]{Sasha1}.

When ${\rm char}(\k)=p>0$, we start with PBW generators described in  \cite[Theorem~7.3]{GT18}(ii) and then ``improve'' them by subtracting scalar multiples of PBW monomials which correspond to unwanted terms (supported on $x_1,\ldots, x_r$). This process will terminate after finitely many steps.
The uniqueness of $\Theta(x_i)$'s then follows as in the case where ${\rm char}(\k)=0$ since the formula displayed 
on \cite[p.~27]{Sasha1} is still valid in characteristic $p$.
\end{proof}
\subsection{The $p$-central reductions of $U(\g,e)$}
\label{ss:pcentreandSkryabin}
In \cite[Appendix]{Sasha1} Skryabin proved that the category of modules over the finite $W$-algebra $U(\g,e)$ is equivalent to the category of Whittaker $\g$-modules when $\k=\C$. This was inspired by a Morita equivalence proven over fields of positive characteristic by the first-named author and generalised in \cite{Pr10, To17, GT18}. In this paper we only need the characteristic $p$ version of this result, which is enriched by the presence of the $p$-centres of $U(\g)$ and $U(\g,e)$. 


The $p$-centre $Z_p(\g) \subseteq U(\g)$ was described in Subsection \ref{ss:notationanddefinitions}, and we recall that there is a natural $G$-equivariant isomorphism $Z_p(\g) \cong \k[(\g^*)^{(1)}]$. We define $\hZ(\g,e)$ to be the image of the natural map $Z_p(\g) \to Q$. Since $M$ preserves both $Z_p(\g)$ and $U(\g)\m_\chi$ it acts on $\hZ_p(\g,e)$ and the $p$-centre of $U(\g,e)$ is defined to be $Z_p(\g,e)\, :=\, \hZ_p(\g,e)^{\Ad\,M}$.

We pick any graded subspace $\v \subseteq \g$ which is complementary to $[\g,e]$ in $\g$, and put $\Sb_\chi \,:=\, \chi + \bar \kappa(\v)$ where $\bar \kappa$ denotes the isomorphism $\g \to \g^*$ induced by $\kappa$. This variety is known as the {\it good transverse slice} to the coadjoint orbit $\Ad^*(G)\chi$. Since  $\v \subseteq \bigoplus_{i\le 0}\,\g(i)$ is graded  we have a contracting $\k^\times$-action on $\Sb_\chi$ defined by
\begin{eqnarray}
\label{e:contractingaction}
\mu(t) \cdot \eta\, :=\, t^2 (\Ad^*\,\lambda(t))\,\eta\ \quad \qquad \big(\forall\,t\in \k^\times,\ \forall\,\eta \in \Sb_\chi\big) 
\end{eqnarray}
Clearly, $\chi$ is the only fixed point of this action.
There is a natural inclusion $\Sb_\chi^{(1)} \hookrightarrow (\g^*)^{(1)}$ and we shall identify $\Sb_\chi^{(1)}$ with its image in the sequel.

As explained in \cite[\textsection 8]{GT18} the $p$-centre $Z_p(\g,e)$ identifies with the coordinate ring $\k[\Sb_\chi^{(1)}]$ and the finite $W$-algebra $U(\g,e)$ is a free module of rank $p^r$ over $Z_p(\g,e)$ where $r=\dim\g^e$. Hence every $\eta \in \Sb_\chi$ gives rise to a maximal ideal $J_\eta$ of 
$Z_p(\g,e)\,\cong\,\k[\Sb_\chi^{(1)}]$. This leads to a central reduction $$U_\eta(\g,e)\, := \,U(\g,e) / J_\eta U(\g,e)\,\cong\,U(\g,e)\otimes_{Z_p(\g,e)}\,\k_\eta$$ known as the {\it reduced finite $W$-algebra} associated with $\eta$. Each algebra $U_\eta(\g,e)$ has dimension $p^r$ over $\k$. The next result was first proven in \cite[Lemma~2.2]{Pr10}, and then generalised to the hypotheses of the current paper in \cite[\textsection 8 \& \textsection 9]{GT18}.
\begin{Lemma} 
\label{L:Moritatheorem}
Let $\O(\chi)$ denote the coadjoint $G$-orbit of $\chi=\kappa(e,-)$ and $d(\chi)=\frac{1}{2}\dim\O(\chi)$. For every $\eta \in \Sb_\chi$ we have an algebra isomorphism
$$U_\eta(\g) \,\cong\, \Mat_{p^{d(\chi)}} (U_\eta(\g,e)).$$
\end{Lemma}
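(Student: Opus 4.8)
The plan is to construct the Morita equivalence explicitly via the generalised Gelfand--Graev module $Q$ and its reduction modulo the $p$-central character $\eta$, reducing everything to the known characteristic-$p$ Morita theorem (\cite{Pr10, GT18}) combined with bookkeeping about the $p$-centre. First I would recall that the reduced enveloping algebra $U_\eta(\g)$ sees the action of $Z_p(\g)$ through the character corresponding to the point $\eta \in \Sb_\chi^{(1)} \hookrightarrow (\g^*)^{(1)}$. Set $Q_\eta := Q \otimes_{Z_p(\g)} \k_\eta$; this is a cyclic $U_\eta(\g)$-module, and by the PBW basis for $Q$ described in Subsection~\ref{ss:PBWtheorem} (restricting the degrees of the $x_i$-exponents and the $z_j$-exponents to lie below $p$ after passing to the $p$-central quotient) one computes $\dim_\k Q_\eta = p^{\dim \m + s + r} = p^{d(\chi) + r}$, using $\dim\m = d(\chi) - s$ which follows from $\dim\m = \tfrac12(\dim\g(-1) - \dim\g(-1)_0) + \sum_{i<-1}\dim\g(i)$ and the standard count $\dim\g^e = \dim\g - 2\dim\m - \dim\g(-1)_0 - \dim\m = \dim\g - 2d(\chi)$.

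Next I would identify $\End_{U_\eta(\g)}(Q_\eta)^{\op}$ with $U_\eta(\g,e)$. The point is that $\End_{\g}(Q)^{\op}$ need not equal $U(\g,e) = Q^{\Ad M}$ in characteristic $p$ (as noted after Lemma~\ref{L:Moritatheorem}'s surrounding discussion), but after reducing by the $p$-central ideal $J_\eta$ the unipotent group $M$ is ``small enough'' relative to the now finite-dimensional situation: precisely, $\Ad M$ acts on $Q_\eta$ through a finite quotient and one has $Q_\eta^{\Ad M} = Q_\eta^{\ad \m}$ on the reduced module, so Frobenius reciprocity gives $\End_{U_\eta(\g)}(Q_\eta)^{\op} \cong (Q_\eta)^{\Ad M} \cong U(\g,e)\otimes_{Z_p(\g,e)} \k_\eta = U_\eta(\g,e)$. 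This step is essentially the content of \cite[Lemma~4.4]{GT18} together with \cite[\textsection 8--9]{GT18}, which I would invoke rather than reprove; the key input is the compatibility of the map $Z_p(\g) \to Q$ with $Z_p(\g,e) = \hZ_p(\g,e)^{\Ad M} \cong \k[\Sb_\chi^{(1)}]$ and the matching of $J_\eta$ on both sides under the inclusion $\Sb_\chi^{(1)} \hookrightarrow (\g^*)^{(1)}$.

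With these two identifications in hand, the conclusion follows from a double-centraliser argument. I would show $Q_\eta$ is a projective generator for $U_\eta(\g)\mod$: it is projective because $Q = U(\g)\otimes_{U(\p)} \k_\chi$ (induction from the parabolic, where $\k_\chi$ is the appropriate one-dimensional module) is free over $U(\g)\m_\chi$-complement and hence $Q_\eta$ is a direct summand of a free $U_\eta(\g)$-module — more cleanly, $U_\eta(\g)$ is a symmetric algebra (reduced enveloping algebras of restricted Lie algebras are Frobenius, indeed symmetric) so projectivity of $Q_\eta$ reduces to checking it is a summand, which one gets from the fact that $U(\g)\m_\chi$ is generated by a regular-sequence-like system; and it is a generator because every simple $U_\eta(\g)$-module is a Whittaker module by the Morita theorem of \cite{Pr10, GT18}, hence a quotient of $Q_\eta$. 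Then Morita theory gives $U_\eta(\g) \cong \End_{U_\eta(\g,e)}(Q_\eta)$, and since $Q_\eta$ is free of rank $p^{d(\chi)}$ over $U_\eta(\g,e)$ — this is exactly the PBW basis statement, as the $\Theta(x_i)^{i_i}\cdots\Theta(x_r)^{i_r}$ with all exponents $<p$ form a $\k$-basis of $U_\eta(\g,e)$ and the monomials $z^\j x^{\i'}$ with $\i'$ supported on $x_{r+1},\dots,x_m$ freely generate $Q_\eta$ over it — we get $\End_{U_\eta(\g,e)}(Q_\eta) \cong \Mat_{p^{d(\chi)}}(U_\eta(\g,e))$, as required.

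\textbf{Main obstacle.} I expect the delicate point to be the passage from $\End_{\g}(Q) \ne U(\g,e)$ in characteristic $p$ to $\End_{U_\eta(\g)}(Q_\eta)^{\op} = U_\eta(\g,e)$: one must be careful that reducing mod $J_\eta$ genuinely repairs the discrepancy between $M$-invariants and $\m$-invariants, and that the rank-$p^{d(\chi)}$ freeness of $Q_\eta$ over $U_\eta(\g,e)$ is compatible with the PBW normalisation of Lemma~\ref{L:PBWtheorem}. Fortunately all of this is packaged in \cite[\textsection 8--9]{GT18} and \cite[Lemma~2.2]{Pr10} under exactly the standard hypotheses in force here, so in the write-up I would lean on those references and only spell out the dimension count $\dim Q_\eta = p^{d(\chi)+r}$ and the generator/projective verification explicitly.
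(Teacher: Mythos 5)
The paper offers no proof of this lemma at all: it is stated as a recollection and attributed to \cite[Lemma~2.2]{Pr10} and \cite[\textsection 8 \& \textsection 9]{GT18}. Your reconstruction follows exactly the route taken in those sources (reduce $Q$ modulo $J_\eta$, identify $\End_{U_\eta(\g)}(Q_\eta)^{\op}$ with $U_\eta(\g,e)$, show $Q_\eta$ is a progenerator free of rank $p^{d(\chi)}$ over $U_\eta(\g,e)$, and apply the double centraliser theorem), and you defer to the same references for the delicate points, so at the level of strategy you and the paper agree.

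Two concrete corrections, though. First, $Q$ is not $U(\g)\otimes_{U(\p)}\k_\chi$; it is $U(\g)\otimes_{U(\m)}\k_\chi$ with $\m$ the \emph{nilpotent} subalgebra $\g(-1)_0\oplus\bigoplus_{i<-1}\g(i)$, and the projectivity of $Q_\eta$ is not the soft statement you suggest. The one-dimensional module $\k_\chi$ is \emph{not} projective over the local subalgebra of $U_\eta(\g)$ generated by $\m_\chi$ (every element of $\m_\chi$ is nilpotent there), so $Q_\eta$ is not a direct summand of a free module for the reason you give; establishing projectivity is the substantive content of \cite[\textsection 2]{Sasha1} and \cite[\textsection 9]{GT18} and is intertwined with the Kac--Weisfeiler divisibility rather than a consequence of a ``regular-sequence-like'' presentation. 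Second, your dimension bookkeeping contains two compensating slips: the correct identities are $\dim\m=d(\chi)$ (not $d(\chi)-s$) and $\dim_\k Q_\eta=p^{m+s}$ where $m=\dim\bigoplus_{i\ge 0}\g(i)$, and one then checks $m+s=\dim\g-\dim\m=2d(\chi)+r-d(\chi)=d(\chi)+r$; your intermediate exponent $\dim\m+s+r$ is off by $s$, which your incorrect value of $\dim\m$ happens to cancel. Neither issue affects the viability of the approach, but as written the projectivity step is a genuine gap if the proof is meant to stand without the citations.
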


\subsection{Small representations and rigid nilpotent orbits}
\label{ss:smallrigid}
We retain the setup and notation of the previous Subsection. In particular, we assume that $p={\rm char}(\k)$ is a good prime for $G$. We shall apply Proposition~\ref{P:reductionrigid} to reduce the existence of small modules for reduced enveloping algebras to the existence of one-dimensional representations for finite $W$-algebras associated to rigid nilpotent orbits.
\begin{Proposition}
\label{P:reductionprop}
Suppose that $U(\g_0,e_0)$ admits a one-dimensional representation whenever $G_0$ is a simple algebraic $\k$-group over $\k$ of type other than $\rm A$ and $e_0 \in \g_0$ is a rigid nilpotent element of $\g_0$. Then $U_\chi(\g)$ admits a $p^{d(\chi)}$-dimensional representation for every reductive group $G$ over $\k$ satisfying the standard hypotheses and every $\chi \in \g^*$.
\end{Proposition}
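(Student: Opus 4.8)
The plan is to reduce the statement, via Proposition~\ref{P:reductionrigid} together with the Morita isomorphism of Lemma~\ref{L:Moritatheorem}, to an assertion purely about reduced finite $W$-algebras, and then to propagate a one-dimensional representation of $U(\g,e)$ to one of the finite-dimensional quotient $U_\chi(\g,e)$ using the contracting $\k^\times$-action on the good transverse slice. First, by Proposition~\ref{P:reductionrigid} it suffices to exhibit a $p^{d(\xi)}$-dimensional $U_\xi(\g_0)$-module whenever $G_0$ is simple and satisfies the standard hypotheses and $\xi=\kappa(e_0,-)$ for a rigid nilpotent $e_0\in\g_0$; since a simple Lie algebra of type $\rm A$ contains no nonzero rigid nilpotents and the case $e_0=0$ is settled by the trivial module, we may assume $G_0$ is not of type $\rm A$ and $e_0\ne 0$. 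Abbreviating $(G_0,\g_0,e_0,\xi)$ to $(G,\g,e,\chi)$ and applying Lemma~\ref{L:Moritatheorem} at the base point $\eta=\chi$ of $\Sb_\chi$, we get $U_\chi(\g)\cong\Mat_{p^{d(\chi)}}\!\big(U_\chi(\g,e)\big)$, so it is equivalent to produce a one-dimensional $U_\chi(\g,e)$-module. The hypothesis of the proposition gives a one-dimensional representation of $U(\g,e)$, i.e.\ the abelianization $E:=U(\g,e)^{\ab}$ is nonzero, and it remains to deduce $U_\chi(\g,e)^{\ab}\ne 0$.

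For the second step, recall from Subsection~\ref{ss:pcentreandSkryabin} that $U(\g,e)$ is a finite module over its $p$-centre $Z_p(\g,e)\cong\k[\Sb_\chi^{(1)}]$, and that each maximal ideal $J_\eta$ ($\eta\in\Sb_\chi$) is generated by central elements, so $U_\eta(\g,e)^{\ab}=E/J_\eta E$. Hence the set $\mathcal{Y}$ of those $\eta\in\Sb_\chi^{(1)}$ for which $U_\eta(\g,e)$ admits a one-dimensional representation equals $\operatorname{Supp}_{Z_p(\g,e)}(E)$; it is a closed subvariety of $\Sb_\chi^{(1)}$, nonempty because $E\ne 0$, and we must show $\chi^{(1)}\in\mathcal{Y}$. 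The contracting action $\mu$ of \eqref{e:contractingaction} descends to $\Sb_\chi^{(1)}$, has $\chi^{(1)}$ as its unique fixed point, and contracts the slice onto it; thus it suffices to prove that $\mathcal{Y}$ is $\mu$-stable, since then $\chi^{(1)}=\lim_{s\to 0}\mu(s)\eta$ lies in $\mathcal{Y}$ for any $\eta\in\mathcal{Y}$, by closedness.

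To prove $\mu$-stability I would deform $U(\g,e)$ along its Kazhdan filtration. Form the Rees algebra $\mathcal{A}=\Rees U(\g,e)=\bigoplus_{d\ge 0}(\F_d U(\g,e))\hbar^d$, a graded $\k[\hbar]$-algebra, flat over $\k[\hbar]$, with $\mathcal{A}/(\hbar-1)\cong U(\g,e)$. Because each generator $x^p-x^{[p]}-\chi(x)^p$ of $I_\chi$, with $x$ homogeneous for the good grading, is homogeneous for the Kazhdan filtration, the $p$-centre lifts to a graded central subalgebra $\mathcal{Z}\subseteq\mathcal{A}$ isomorphic to $\k[\Sb_\chi^{(1)}][\hbar]$, the grading on $\k[\Sb_\chi^{(1)}]$ being the one induced by $\mu$ and scaled by $p$. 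Abelianizing, $\mathcal{A}^{\ab}$ is a graded $\k[\hbar]$-algebra, finite over $\mathcal{Z}$, with $\mathcal{A}^{\ab}/(\hbar-s)\cong E$ for every $s\in\k^\times$; the grading gives a $\k^\times$-action on $\mathcal{A}^{\ab}$, and the induced isomorphisms $\mathcal{A}^{\ab}/(\hbar-s)\xrightarrow{\sim}E$ carry the central subalgebra $\mathcal{Z}/(\hbar-s)\cong\k[\Sb_\chi^{(1)}]$ onto the image of $Z_p(\g,e)$ in $E$ after rescaling the coordinates on $\Sb_\chi^{(1)}$ by $\mu(s^p)$. As $c\mapsto c^p$ is bijective on $\k^\times$, this gives $U_\eta(\g,e)^{\ab}\cong U_{\mu(c)\eta}(\g,e)^{\ab}$ for all $c\in\k^\times$ and $\eta\in\Sb_\chi^{(1)}$, so $\mathcal{Y}$ is $\mu$-stable. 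Putting the steps together, $U_\chi(\g,e)$ has a one-dimensional representation, whence $U_\chi(\g)$ has a $p^{d(\chi)}$-dimensional module, and Proposition~\ref{P:reductionrigid} finishes the proof.

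The main obstacle is this deformation step: one has to describe precisely how $Z_p(\g,e)$ sits inside the Kazhdan-filtered algebra $U(\g,e)$, and hence inside $\mathcal{A}$, so that the fibre identifications $\mathcal{A}^{\ab}/(\hbar-s)\cong E$ rescale central characters by exactly the contracting action. This is where the factor of $p$ coming from $x\mapsto x^p$ must be reconciled with the weights of $\mu$ on $\Sb_\chi$; the relevant inputs are the descriptions of the $p$-centre and of the Kazhdan filtration in \cite{GT18} and the PBW theorem, Lemma~\ref{L:PBWtheorem}. Once the deformation is set up, the remaining ingredients --- finiteness of $E$ over $Z_p(\g,e)$, closedness of $\mathcal{Y}$, and the elementary fact that a nonempty closed subset of $\Sb_\chi^{(1)}$ stable under a contracting $\k^\times$-action must contain the fixed point --- are routine.
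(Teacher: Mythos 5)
Your argument is correct and turns on the same mechanism as the paper's proof --- a nonempty Zariski closed subset of $\Sb_\chi$ stable under the contracting action \eqref{e:contractingaction} must contain the fixed point $\chi$ --- but it is packaged differently. The paper applies Lemma~\ref{L:Moritatheorem} at the point $\eta$ carrying the central character of the given one-dimensional representation, rephrases the existence of a $p^{d(\chi)}$-dimensional $U_\eta(\g)$-module as the existence of a two-sided ideal of codimension $p^{2d(\chi)}$, imports the closedness of the locus $\Xi$ of such $\psi$ from \cite[Lemma~2.3]{PS99} and its $\k^\times$-stability from part~(c) of the proof of \cite[Theorem~2.2]{Pr10}, and, having placed $\chi$ in $\Xi$, still needs the Kac--Weisfeiler theorem of \cite{Pr95} to upgrade ``a simple module of dimension $\le p^{d(\chi)}$'' to one of dimension exactly $p^{d(\chi)}$. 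You run the whole argument upstairs in the finite $W$-algebra: closedness is automatic from $\mathcal{Y}=\operatorname{Supp}_{Z_p(\g,e)}\bigl(U(\g,e)^{\ab}\bigr)$, and invoking the Morita isomorphism only at the end, at $\eta=\chi$, lets you bypass both \cite[Lemma~2.3]{PS99} and the final appeal to \cite{Pr95}. The price is that you must prove the $\mu$-stability yourself; your Rees-algebra outline is essentially the argument the paper cites from \cite{Pr10}, and it does go through. Two points to tighten there: the elements $x^p-x^{[p]}$ are \emph{not} Kazhdan-homogeneous (for $x\in\g(i)$ the term $x^p$ has degree $p(i+2)$ while $x^{[p]}$ has degree $pi+2<p(i+2)$), so the graded central subalgebra $\mathcal{Z}\subseteq\Rees U(\g,e)$ must be built from the homogeneous lifts $(x^p-x^{[p]})\hbar^{p(i+2)}$, and computing the resulting identification of fibres one finds that $J_\eta$ at $\hbar=s$ corresponds to $J_{\mu(s^{-1})\eta}$ in $U(\g,e)$ --- the $p$-th power in the coordinate $\eta(x)^p$ absorbs the factor $p$ in the Kazhdan degree, so the twist is by $\mu(s^{\pm1})$ rather than $\mu(s^p)$. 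This is harmless for your conclusion, since either way $\mathcal{Y}$ is carried onto itself by all of $\mu(\k^\times)$.
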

\begin{proof} In view of Proposition~\ref{P:reductionrigid} we need to prove $U_\xi(\g_0)$ admits small representations for every nonzero rigid nilpotent $\xi\in \g_0^*$. To simplify notation we put $\g_0=\g$ and $\xi=\chi=\kappa(e,-)$ where $e$ is a nonzero rigid nilpotent element of $\g$.
The following argument is very similar to \cite[Theorem~2.2]{Pr10}, however we present the details, in the current setting, for the reader's convenience.

By our assumption, the finite $W$-algebra $U(\g,e)$ admits a one-dimensional representation. This representation admits a central character and so, in the notation of Subsection \ref{ss:pcentreandSkryabin}, there exists $\eta \in \Sb_\chi$ and $J_\eta \subseteq Z_p(\g,e)$ such that the representation factors through $U(\g,e) \twoheadrightarrow U_\eta(\g,e)$. Applying Lemma~\ref{L:Moritatheorem} we see that $U_\eta(\g)$ has a representation of dimension $p^{d(\chi)}$ and, thanks to \cite[Theorem~5.4(3)]{PS99} and \cite[Lemma~2.10]{Ja04}, this is equivalent to $U_\eta(\g)$ admitting a two-sided ideal of codimension $p^{2d_\chi}$. It follows from \cite[Lemma~2.3]{PS99} that there is a Zariski closed set $\Xi \subseteq \Sb_\chi$ such that for every $\psi\in \Xi$ the reduced enveloping algebra $U_\psi(\g)$ admits a two-sided ideal of codimension $p^{2d_\chi}$. 
By part~(c) of the proof of \cite[Theorem~2.2]{Pr10}, the set
$\Xi$ is stable under the contracting $\k^\times$-action described in  \eqref{e:contractingaction}. It should be mentioned that in \cite{Pr10} the author works under the assumption that $p \gg 0$, but for this particular argument the standard hypotheses are sufficient. 

Since $\chi$ is contained in every $\k^\times$-stable Zariski closed subset of $\Sb_\chi$ we deduce that the algebra
$U_\chi(\g)$ 
has a homomorphic image of dimension $p^{2d(\chi)}$. But then $U_\chi(\g)$ has a simple module of dimension $\le p^{d(\chi)}$. Applying the main result of \cite{Pr95} completes the proof.
\end{proof}

\section{Nilpotent elements and their centralisers}
In this section we gather together important results regarding the structure of centralisers. To begin with we  consider classical Lie algebras of types {\rm B, C, D} and when doing so, we take $\k$ to be an algebraically closed field such that $\Char(\k) \neq 2$. All vector spaces and algebraic groups are defined over $\k$.

\subsection{Nilpotent elements in symplectic and orthogonal Lie algebras}
\label{ss:nilpotentsforclassical}
Fix a positive integer $N$ and $\epsilon = \pm 1$ with $\epsilon^N=1$. Let $\P(N)$ denote the set of all partitions $\lambda = (\lambda_1,\ldots,\lambda_n)$ of $N$ with $\lambda_1 \ge \lambda_2 \ge \cdots \ge \lambda_n\ge 1$ 
and write $\PeN$ for the subset of $\P(N)$ consisting of all $\lambda$ satisfying the following criteria:
\begin{itemize}
	\item[(i)] if $\epsilon = 1$ then the even parts of $\lambda$ occur with even multiplicity;\smallskip
	\item[(ii)] if $\epsilon = -1$ then the odd parts of $\lambda$ occur with even multiplicity.
\end{itemize}
The importance of $\PeN$ lies in the well-known fact that a nilpotent orbit $\O \subset \gl_N$ with partition $\lambda$ intersects an orthogonal subalgebra $\so_N \subset \gl_N$ if and only if $\lambda \in \P_{1}(N)$, whilst $\O$ intersects a symplectic subalgebra $\sp_N \subset \gl_N$ if and only if $\lambda \in \P_{-1}(N)$ (see \cite[Theorem~5.1.6]{CM93} for example). Furthermore this intersection consists of a unique orbit unless $\epsilon = 1$, all parts of $\lambda$ are even and each occurs with even multiplicity, in which case there are two orbits with partition $\lambda$ labelled $I$ and $I\!I$. Such partitions are called {\it very even}. Note that two orbits with the same very even partition are conjugate under the action of the full orthogonal group \cite[1.12]{Ja04}.

Suppose for a moment that $\k=\C$. We now recall a procedure from \cite{BG07} for choosing an $R$-split (metabolic) bilinear form on $\k^N$ determining a classical Lie subalgebra $\g\subset \gl_N$, as well as a representative of each nilpotent orbit in $\g$ lying in the $R$-span of a Chevalley basis of $\g$. We also explain that the construction in {\it loc.\,cit.} leads to a choice of optimal cocharacter for each representative. 

Let $\{v_i, v_{-i} \mid 1\le i \le \lfloor \frac{N}{2} \rfloor\} \cup \{v_0\}$ be a basis for $\k^N$, where we exclude the element $v_0$ in case $N$ is even. There is a form $(\,\cdot\,,\, \cdot\,) : \k^N \times \k^N \to \k$ satisfying $(u,v) = \epsilon (v,u)$ for every $u,v \in \k^N$ determined uniquely by the following formulae
\begin{eqnarray}
\label{e:bilinearform}
\begin{array}{ccccc}
(v_0, v_i) = 0, & (v_0, v_0) = 2 & (v_i, v_j) = 0 = (v_{-i}, v_{-j}), & \text{ and } & (v_i, v_{-j}) = \delta_{i,j},
\end{array}
\end{eqnarray}
where $1\le i,j \le \lfloor \frac{N}{2}\rfloor$. The connected component of the algebraic subgroup of $\GL_N$ which preserves the form will be denoted $G$, and its Lie algebra by $\g = \Lie(G)$. Since $\Char(\k) \ne 2$ we have
\begin{eqnarray*}
	\g \cong \left\{ \begin{array}{rl} \so_N & \text{ if } \ve = 1, \\ \sp_N & \text{ if } \ve = -1.\end{array}\right.
\end{eqnarray*}
If $J$ denotes the Gram matrix of this linear form then there is an involutive automorphism of the Lie algebra $\gl_N$ defined by $X \mapsto -J^{-1}X^\top J$ which we denote $\sigma$. Here we write $X\mapsto X^\top$ for the transpose of $X\in \gl_N$. The $1$-eigenspace of $\sigma$ is precisely $\g$ and the group $G$ coincides with the identity component of $\GL_N^\sigma$.

We now record a Chevalley basis for $\g$. The standard matrix units in $\gl_N$ are denoted $e_{i,j}$ where $i,j$ vary over $\{0\} \cup \{\pm 1, \pm 2, \cdots, \pm \lfloor \frac{N}{2} \rfloor\}$ and, once again, we exclude the index 0 when $N$ is even. When $\epsilon = 1$ a Chevalley basis is given as follows
\begin{eqnarray}
\label{e:ChevalleybasisOrth}
\begin{array}{cc}
\{e_{i,j} - e_{-j, -i} \mid 1\le i,j \le \lfloor N/2 \rfloor \} \cup \{e_{i, -j} - e_{j, -i}, e_{-j, i} - e_{-i,j} \mid 1\le i< j \le \lfloor N/2 \rfloor \} \\ \ \ \ \ \cup \{2e_{k,0} - e_{0,-k}, e_{0,k} - 2e_{-k,0} \mid 1\le k \le \lfloor N/2\rfloor \}.
\end{array}
\end{eqnarray}
When $\epsilon = -1$ we automatically have $N \in 2\Z$ and a Chevalley basis may be given as follows
\begin{eqnarray}
\label{e:ChevalleybasisSymp}
\begin{array}{cc}
\{e_{i,j} - e_{-j, -i} \mid 1\le i,j \le N/2\} \cup \{e_{i, -j} + e_{j, -i}, e_{-i, j} + e_{-j,i} \mid 1\le i< j \le N/2\} \\ \ \ \ \ \cup \{e_{k,-k}, e_{-k, k} \mid 1\le k \le N/2\}
\end{array}
\end{eqnarray}

Now we fix $\lambda \in \PeN$ and  we call on the notion of {\it the Dynkin pyramid for $\lambda$} defined in Sections 7 and 8 of \cite{BG07}. Informally this is a diagram consisting of $N$ numbered boxes of size $2\times 2$ arranged in the plane, along with some number of crossed boxes. Rather than describing the construction in detail we present here four examples of Dynkin pyramids, and refer the reader to {\it loc. cit.} for the full construction. Here are the Dynkin pyramids associated to the partitions $(5,5,4) \in \P_{-1}(14), (4,3,3,2) \in \P_{-1}(12), (4,4,3,1,1) \in \P_1(13), (5,2,2,1) \in \P_1(10)$:
\begin{equation*}
\label{e:apyramid}
\begin{array}{c}
\begin{picture}(480,100)
\put(50,50){\circle*{3}}
\put(10,10){\line(1,0){80}}
\put(0,30){\line(1,0){100}}
\put(0,50){\line(1,0){100}}
\put(0,70){\line(1,0){100}}
\put(10,90){\line(1,0){80}}
\put(0,30){\line(0,1){40}}
\put(20,30){\line(0,1){40}}
\put(40,30){\line(0,1){40}}
\put(60,30){\line(0,1){40}}
\put(80,30){\line(0,1){40}}
\put(100,30){\line(0,1){40}}
\put(10,10){\line(0,1){20}}
\put(30,10){\line(0,1){20}}
\put(50,10){\line(0,1){20}}
\put(70,10){\line(0,1){20}}
\put(90,10){\line(0,1){20}}
\put(10,70){\line(0,1){20}}
\put(30,70){\line(0,1){20}}
\put(50,70){\line(0,1){20}}
\put(70,70){\line(0,1){20}}
\put(90,70){\line(0,1){20}}
\put(10,70){\line(1,1){20}}
\put(30,70){\line(1,1){20}}
\put(30,70){\line(-1,1){20}}
\put(50,70){\line(-1,1){20}}
\put(50,10){\line(1,1){20}}
\put(70,10){\line(1,1){20}}
\put(70,10){\line(-1,1){20}}
\put(90,10){\line(-1,1){20}}
\put(6,56){\hbox{1}}
\put(26,56){\hbox{2}}
\put(46, 56){\hbox{3}}
\put(66, 56){\hbox{4}}
\put(86,56){\hbox{5}}
\put(56, 76){\hbox{6}}
\put(76, 76){\hbox{7}}
\put(86,36){\hbox{-1}}
\put(66,36){\hbox{-2}}
\put(46, 36){\hbox{-3}}
\put(26, 36){\hbox{-4}}
\put(6,36){\hbox{-5}}
\put(36, 16){\hbox{-6}}
\put(16, 16){\hbox{-7}}

\put(170,50){\circle*{3}}
\put(150,0){\line(1,0){40}}
\put(140,20){\line(1,0){60}}
\put(130,40){\line(1,0){80}}
\put(130,60){\line(1,0){80}}
\put(140,80){\line(1,0){60}}
\put(150,100){\line(1,0){40}}
\put(150,0){\line(0,1){20}}
\put(170,0){\line(0,1){20}}
\put(190,0){\line(0,1){20}}
\put(140,20){\line(0,1){20}}
\put(160,20){\line(0,1){20}}
\put(180,20){\line(0,1){20}}
\put(200,20){\line(0,1){20}}
\put(130,40){\line(0,1){20}}
\put(150,40){\line(0,1){20}}
\put(170,40){\line(0,1){20}}
\put(190,40){\line(0,1){20}}
\put(210,40){\line(0,1){20}}
\put(140,60){\line(0,1){20}}
\put(160,60){\line(0,1){20}}
\put(180,60){\line(0,1){20}}
\put(200,60){\line(0,1){20}}
\put(150,80){\line(0,1){20}}
\put(170,80){\line(0,1){20}}
\put(190,80){\line(0,1){20}}
\put(150,80){\line(1,1){20}}
\put(170,80){\line(-1,1){20}}
\put(170,0){\line(1,1){20}}
\put(190,0){\line(-1,1){20}}
\put(176,46){\hbox{1}}
\put(196,46){\hbox{2}}
\put(146, 66){\hbox{3}}
\put(166, 66){\hbox{4}}
\put(186,66){\hbox{5}}
\put(176, 86){\hbox{6}}
\put(156,46){\hbox{-1}}
\put(136,46){\hbox{-2}}
\put(186, 26){\hbox{-3}}
\put(166, 26){\hbox{-4}}
\put(146,26){\hbox{-5}}
\put(156, 6){\hbox{-6}}

\put(280,0){\line(1,0){20}}
\put(250,20){\line(1,0){80}}
\put(250,40){\line(1,0){80}}
\put(250,60){\line(1,0){80}}
\put(250,80){\line(1,0){80}}
\put(280,100){\line(1,0){20}}
\put(280,0){\line(0,1){20}}
\put(300,0){\line(0,1){20}}
\put(250,20){\line(0,1){20}}
\put(270,20){\line(0,1){20}}
\put(290,20){\line(0,1){20}}
\put(310,20){\line(0,1){20}}
\put(330,20){\line(0,1){20}}
\put(260,40){\line(0,1){20}}
\put(280,40){\line(0,1){20}}
\put(300,40){\line(0,1){20}}
\put(320,40){\line(0,1){20}}
\put(250,60){\line(0,1){20}}
\put(270,60){\line(0,1){20}}
\put(290,60){\line(0,1){20}}
\put(310,60){\line(0,1){20}}
\put(330,60){\line(0,1){20}}
\put(280,80){\line(0,1){20}}
\put(300,80){\line(0,1){20}}
\put(286,46){\hbox{0}}
\put(306,46){\hbox{1}}
\put(256,66){\hbox{2}}
\put(276, 66){\hbox{3}}
\put(296, 66){\hbox{4}}
\put(316,66){\hbox{5}}
\put(286, 86){\hbox{6}}
\put(266,46){\hbox{-1}}
\put(316,26){\hbox{-2}}
\put(296, 26){\hbox{-3}}
\put(276, 26){\hbox{-4}}
\put(256,26){\hbox{-5}}
\put(286, 6){\hbox{-6}}

\put(420,50){\circle*{3}}
\put(400,10){\line(1,0){40}}
\put(370,30){\line(1,0){100}}
\put(370,50){\line(1,0){100}}
\put(370,70){\line(1,0){100}}
\put(400,90){\line(1,0){40}}
\put(370,30){\line(0,1){40}}
\put(390,30){\line(0,1){40}}
\put(410,30){\line(0,1){40}}
\put(430,30){\line(0,1){40}}
\put(450,30){\line(0,1){40}}
\put(470,30){\line(0,1){40}}
\put(400,10){\line(0,1){20}}
\put(420,10){\line(0,1){20}}
\put(440,10){\line(0,1){20}}
\put(400,70){\line(0,1){20}}
\put(420,70){\line(0,1){20}}
\put(440,70){\line(0,1){20}}
\put(370,50){\line(1,1){20}}
\put(390,50){\line(-1,1){20}}
\put(390,50){\line(1,1){20}}
\put(410,50){\line(-1,1){20}}
\put(430,30){\line(1,1){20}}
\put(450,30){\line(-1,1){20}}
\put(450,30){\line(1,1){20}}
\put(470,30){\line(-1,1){20}}
\put(416,56){\hbox{1}}
\put(436,56){\hbox{2}}
\put(456, 56){\hbox{3}}
\put(406, 76){\hbox{4}}
\put(426,76){\hbox{5}}
\put(416,36){\hbox{-1}}
\put(396,36){\hbox{-2}}
\put(376, 36){\hbox{-3}}
\put(426, 16){\hbox{-4}}
\put(406,16){\hbox{-5}}

\end{picture}
\end{array}
\end{equation*}
We recall that if the Dynkin pyramid has a zeroth row then it is referred to as a {\it skew row}, and that every row containing crossed boxes is a skew row. In each diagram above a dot ($\bullet$) or a zero is placed at the $(0,0)$ coordinate. The columns and the rows are each numbered by their $x$-coordinate and $y$-coordinate respectively. We write $\col(i)$ for the column of the box containing entry $i$ and similarly we write $\row(i)$ for the row number of $i$. For example, in the Dynkin diagram for $(5,5,4)$ above we have $\row(1) = \cdots = \row(5) = 1, \row(6) = \row(7) = 3, \col(1) = -4, \col(2) = -2, \col(3) = 0, \col(4) = 2, \col(5) = 4, \col(6) = 1, \col(7) = 3$. The rows numbered 3 and $-3$ form a pair of skew rows.

Using this diagram we can introduce a nilpotent element $e\in \g$ as follows. When $\epsilon = 1$ we let $e = \sum \sigma_{i,j} e_{i,j}$ be the sum over all indexes 
$i,j$ such that
\begin{eqnarray*}
	\text{either} & & \col(i) = \col(j) + 2 \text{ and } \row(i) = \row(j);\\
	\text{ or } & & \col(i) = 2, \col(j) = 0 \text{ and } \row(i) = - \row(j) \text{ is a skew
		row in the upper half plane;}\\
	\text{ or } & & \col(i) = 0, \col(j) = -2 \text{ and } \row(i) = - 
	\row(j) \text{ is a skew row in the upper half plane.}
\end{eqnarray*}
Here $\sigma_{i,j} \in \{\pm 1, \pm 2\}$ is the coefficient of $e_{i,j}$ appearing in the basis \eqref{e:ChevalleybasisOrth}. It is not hard to see that this definition gives an element with partition $\lambda$, indeed after checking the example $\lambda = (3,1)$ the general case is clear. To give one example the nilpotent element associated to the partition $(5,2,2,1) \in \P_1(10)$ is
$$e = e_{5,4} - e_{-4, -5} + e_{3,2} - e_{-2, -3} + e_{2,1} - e_{-1, -2}  + e_{1,-2} - e_{2, -1} .$$
In case $\lambda$ is very even, this recipe determines a representative of one of the two orbits associated to the partition $\lambda$. A representative for the other orbit can be obtained by replacing certain $\sigma_{i,j}$ with their negatives; see \cite[p. 26]{BG07}.

When $\epsilon = -1$ we let $e = \sum \sigma_{i,j} e_{i,j}$ where the sum is taken over all $i,j$ such that
\begin{eqnarray*}
	\text{either } & & \col(i) = \col(j) + 2 \text{ and } \row(i) = \row(j);\\
	\text{or } & & \col(i) = 1, \col(j) = -1 \text{ and } \row(i) = - \row(j) \text{ is a skew
		row in the upper half plane.}
\end{eqnarray*}
In this case $\sigma_{i,j} \in \{\pm 1\}$ is precisely the coefficient on $e_{i,j}$ appearing in \eqref{e:ChevalleybasisSymp}. It is straightforward to see that $e$ has partition $\lambda$, indeed there is a permutation matrix which places $e$ in Jordan normal form.

From now on and until the end of this Subsection we assume that $\k$ is an algebraically closed field of characteristic $\ne 2$. The constructions described above provide us, via base change, with nice representatives of nilpotent orbits in $\g=\g_\k$. Furthermore, each Dynkin pyramid determines a cocharacter $\k^\times \to \GL_N$ acting diagonally on our chosen basis for $\k^N$ by setting $t\cdot  e_j = t^{\col(j)} e_j$ for all $-\lfloor N/2\rfloor \le j \le \lfloor N/2\rfloor$. Using the symmetry $\col(-i) = -\col(i)$ of the labelling of the Dynkin pyramid we see that the cocharacter preserves the form defined by \eqref{e:bilinearform}. Thus we obtain the so-called {\it Dynkin cocharacter} $\lambda_e : \k^\times \to G$. This cocharacter induces {\it the Dynkin grading} on $\g$ and $\gl_N$ by the weight space decomposition $\gl_N = \bigoplus_{i\in \Z} \gl_N(i)$ where $\gl_N(i) = \{x\in \gl_N \mid t\cdot x = t^i x \text{ for all } t\in \k^\times\}$. Similarly, $\g = \bigoplus_{i\in \Z} \g(i)$ where $\g(i) = \gl_N(i) \cap \g$. In particular we have $e_{i,j} \in \gl_N(\col(j) - \col(i))$, so that $e\in \g(2)$. 

An important feature of this construction is that the Dynkin grading is good for $e$, which means that $\g^e \subseteq \bigoplus_{i \ge 0} \g(i)$.
The Kempf--Rousseau theory studies the {\it optimal tori for $G$-unstable vectors} which play a definitive role in our work (see \cite[2.2]{Pr03} for a short overview).
\begin{Lemma}
	\label{L:Dynkinoptimal}
	The Dynkin cocharacter is optimal for $e$.
\end{Lemma}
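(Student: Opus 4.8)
The statement asserts that the Dynkin cocharacter $\lambda_e$ constructed from the Dynkin pyramid is optimal for the nilpotent (hence $G$-unstable) vector $e$ in the sense of Kempf--Rousseau theory. My plan is to verify this by appealing to the characterisation of optimal cocharacters via associated parabolic subgroups and instability degrees, following the strategy already in place in \cite{Pr03}. The key point is that the Dynkin cocharacter carries an $\mathfrak{sl}_2$-triple structure: since $p$ is a good prime for $G$ (in particular $p\neq 2$, which is our standing assumption here, but in fact $p$ good suffices for the classical groups under consideration), the Jacobson--Morozov theorem applies and there is an $\mathfrak{sl}_2$-triple $(e,h,f)$ with $h=\mathrm{d}\lambda_e(1)$. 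One then knows from \cite[Theorem~2.3 and 2.6]{Pr03} (or the Premet--Stewart results quoted in Subsection~\ref{ss:standardhypotheses}) that the cocharacter arising from the semisimple element of such an $\mathfrak{sl}_2$-triple is optimal for $e$; the remaining task is to identify $\mathrm{d}\lambda_e(1)$ with such an $h$.

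First I would recall the numerical characterisation: a cocharacter $\mu$ is optimal for $e$ if it maximises the normalised instability quotient $\|\mu\|^{-1}\cdot m(e,\mu)$, where $m(e,\mu)$ is the weight of $e$ under $\mu$ and $\|\cdot\|$ is a fixed Weyl-invariant norm on cocharacters. Since $e\in\g(2)$, the Dynkin cocharacter achieves weight $2$, and by the theory of $\mathfrak{sl}_2$-triples this is in fact the maximal possible weight for a unit-normalised cocharacter evaluated against $e$, because the good grading associated to the Dynkin cocharacter is the ``standard'' one coming from the semisimple element of the triple. Concretely, I would check that the diagonal cocharacter $t\cdot e_j=t^{\col(j)}e_j$ is precisely the image under $G\hookrightarrow\GL_N$ of the semisimple element of a Jacobson--Morozov $\mathfrak{sl}_2$-triple for $e$ inside $\gl_N$ (this is essentially built into the Dynkin pyramid construction of \cite{BG07}: the column labels are exactly the eigenvalues of the semisimple element of the standard triple acting on $\k^N$), and that this cocharacter factors through $G$ because of the symmetry $\col(-i)=-\col(i)$, which was already noted in the text preceding the lemma.

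The main obstacle, and the step requiring most care, is to rule out the possibility that some \emph{other} cocharacter of $G$ (not of $\GL_N$) gives a strictly larger normalised instability quotient — in other words, optimality within $G$ is a genuinely stronger statement than optimality within $\GL_N$, and the restriction of an optimal cocharacter of $\GL_N$ need not a priori be optimal for the classical subgroup. The clean way around this is again to invoke \cite{Pr03}: there it is shown, under hypotheses matching (H1)--(H3), that \emph{any} cocharacter arising from an $\mathfrak{sl}_2$-triple $(e,h,f)$ in $\g$ with $h$ toral and $[h,e]=2e$, $[h,f]=-2f$ is optimal for $e$ as an element of $\g$; so it suffices to produce such a triple in $\g$ with $\mathrm{d}\lambda_e(1)=h$. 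Since $p\neq 2$ the restriction of the $\mathfrak{sl}_2$-triple structure from $\gl_N$ to $\g$ goes through (one takes $f$ to be the appropriate transpose-type element, which lies in $\g$ by the same $\sigma$-equivariance argument used for $e$), and then $h=\mathrm{d}\lambda_e(1)$ by construction of the Dynkin grading. I would therefore organise the write-up as: (1) exhibit the $\mathfrak{sl}_2$-triple $(e,h,f)$ in $\g$ with $h=\mathrm{d}\lambda_e(1)$, using the pyramid combinatorics and the $\sigma$-stability; (2) observe that $\lambda_e$ is the associated cocharacter; (3) quote \cite[2.3--2.6]{Pr03} to conclude optimality. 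The only computational content is step (1), and that reduces, as the text already remarks for $e$ itself, to checking the small cases $\lambda=(3,1)$ (orthogonal) and a two-by-two block (symplectic), after which the general pattern is forced.
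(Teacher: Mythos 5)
Your overall strategy coincides with the paper's: construct an $\sl_2$-triple $\{e,h,f\}$ in $\g$ with $h=\mathrm{d}\lambda_e(1)$ and then invoke \cite[Theorem~2.3]{Pr03}. However, there is a genuine gap in your final step. You assert that in the setting of \cite{Pr03} \emph{any} cocharacter whose differential is the semisimple member of an $\sl_2$-triple through $e$ is automatically optimal. That is not the criterion available in good characteristic: Theorem~2.3 of \cite{Pr03} also requires that the $(\Ad C_G(\lambda_e))$-orbit of $e$ be Zariski dense in $\g(2)$, i.e.\ $[\g(0),e]=\g(2)$. This cannot be dispensed with, because over $\k$ of characteristic $p$ the $\sl_2$-triples containing $e$ need not be conjugate and a non-standard triple need not produce an optimal (or even good) grading. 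The paper closes this by combining the goodness of the Dynkin grading (so $\g^e(-2)=\{0\}$) with the non-degeneracy of the trace form on $\g\subset\gl_N$ for $\Char(\k)\ne 2$, which yields $[e,\g(0)]=\g(2)$; your write-up must include this verification or something equivalent before the citation of \cite{Pr03} is legitimate. Your preliminary discussion of the instability quotient $m(e,\mu)/\|\mu\|$ does not substitute for it, since the claim that weight $2$ maximises the normalised quotient is exactly what is being proved, not an input.

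Two smaller points. First, producing $f\in\g$ is not simply a matter of ``the same $\sigma$-equivariance argument used for $e$'': the $f$ completing the triple is constructed in $\gl_N$, and one must decompose $\gl_N(-2)=\g(-2)\oplus\q(-2)$ under $\sigma$ and use $\gl_N^e(-2)=\{0\}$ to kill the component in $\q(-2)$. Second, the construction of the triple in $\gl_N$ is not forced by the single example $\lambda=(3,1)$: in the orthogonal case the pairs of skew rows coming from odd parts $\lambda_l>\lambda_m$ of odd multiplicity require explicitly chosen generating vectors (such as $v_j-v_{-j}$ for $\lambda_m=1$ and $2v_j-(-1)^{(\lambda_m-1)/2}v_{-j-\lambda_m+1}$ for $\lambda_m>1$) whose $h$-weights and $\k[e]$-module lengths must be checked; this is where most of the actual work in the paper's proof lies.
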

\begin{proof}
	Let $\lambda = (\lambda_1,\dots,\lambda_n) \in \PeN$. The Lie algebra of the Dynkin torus is spanned by an element $h\in\gl_N$ such that $[h,e]=2e$. We claim that $\{e,h\}$ can be completed to an $\sl_2$-triple in $\gl_N$. To see this it will suffice to find elements $\{w_1,\ldots,w_n\} \subseteq \k^N$ such that $\{e^k w_i \mid 1\le i \le n, \ 0\le k < \lambda_i\}$ is a basis and $hw_i = (1-\lambda_i)w_i$. If $\epsilon = -1$ and $\lambda \in \P_{-1}(N)$ then for each non-skew row with $b$ boxes and leftmost entry $i$ the vector space spanned by $\{e^k v_i \mid k=0,\ldots,b-1\}$ is Dynkin graded with $hv_i = (1-b)v_i$. Similarly for each pair of skew rows, each containing $b$ boxes and leftmost entry $i$, the space spanned by $\{e^k v_i \mid k=0,\ldots,2b-1\}$ is Dynkin graded with $hw_i = (1-2b)w_i$. Now $\k^N$ is the inner direct sum of these subspaces, which proves the claim for $\epsilon = -1$. The argument for $\epsilon = 1$ and $\lambda = \P_1(N)$ is quite similar, the only difference is that each pair of skew rows results from a pair of odd parts $\lambda_l> \lambda_m$ occurring with odd multiplicity. If $i$ is the leftmost index of the lower row, and $j$ the leftmost index of the upper row, then we consider the elements
	$$\begin{array}{rcll} 
	v_i & \text{ and } & v_j - v_{-j} & \text{ for } \lambda_m = 1\\
	v_i & \text{ and } & 2v_j - (-1)^{\frac{\lambda_m-1}{2}}v_{-j-\lambda_m + 1} & \text{ for } \lambda_m > 1.
	\end{array}$$
	One can check that in either case these elements have $h$-weight $1-\lambda_l$ and $1-\lambda_m$, that they lie in $\k^N \setminus \im(e)$, and that they generate $\k[e]$-modules of dimension $\lambda_l$ and $\lambda_m$ respectively. Treating the non-skew rows identically to the $\epsilon=-1$ case we have constructed a basis for $\k^N$ with the desired properties, which proves our claim. We write $\{e,h,f\} \subseteq \gl_N$ for our $\sl_2$-triple.
	
	As $\sigma$ preserves $\gl_N(-2)$ and $\Char(\k) \neq 2$ we have that $\gl_N(-2)\,=\,\g(-2)\oplus \q(-2)$ where $\q(-2)=\{x\in\gl_N(-2)\,|\,\,\sigma(x)=-x\}$.
	Writing $f=f_1+f_2$ with $f_1\in \g(-2)$ and $f_2\in \q(-2)$ and using the fact that $h\in \g$ we deduce that $h=[e,f_1]$ and $[e,f_2]=0$.
	As $\gl_N^e(-2)=\{0\}$ this yields $f_2=0$. As a result, $\{e,h,f\}\subset \g$.
	
	When $\Char(\k)=0$ the above argument implies that the Dynkin grading for $e$
	is induced by the adjoint action of a semisimple element coming from an $\sl_2$-triple of $\g$ containing $e$. When ${\rm char}(\k)\ne 2$ the trace form  $(X,Y)\mapsto {\rm tr}(XY)$ on $\g\subset\gl_N$ is non-degenerate. As $\g^e(-2)=\{0\}$ this entails that $[e,\g(0)]=\g(2)$. Consequently, the $C_G(\lambda_e)$-orbit of $e$ is Zariski dense in $\g(2)$.  Since the differential of the Dynkin cocharacter forms part of an $\sl_2$-triple we are in a position to apply \cite[Theorem~2.3]{Pr03}, which shows that the Dynkin cocharacter is optimal for $e$.
\end{proof}

\subsection{The centraliser of an almost rigid nilpotent element} 
In this Subsection we shall temporarily switch to more traditional representatives
of nilpotent orbits in $\sp_N$ and $\so_N$ described by Springer and Steinberg in \cite[Ch.~IV, \textsection 2]{SS70}. 
This is harmless when one works over an algebraically closed field of characteristic $\ne 2$ as 
in this case one can pass from a Springer--Steinberg representative to a suitable representative $e$ from Subsection~\ref{ss:nilpotentsforclassical}
by means of an element in ${\rm SO}(N)$ or $\Sp(N)$ which commutes with the Dynkin cocharacter
$\lambda_e$. This will be explained in more detail in the proof of
Lemma~\ref{L:basis-g-e}. 

Fix $\lambda = (\lambda_1 \ge \cdots \ge \lambda_n)$ and keep fixed the choice of nilpotent element $e\in \g$ from the previous section. The following result shows that $\g^e$ admits a nice basis, which can be described uniformly for $\epsilon = \pm 1$.
\begin{Lemma}\label{L:basis-g-e}
	\label{L:spanningsetlemma}
	There exists an involution $i \mapsto i'$ on the set $\{1,\ldots,n\}$ and a spanning set for $\g^e$
	\begin{eqnarray}
	\label{e:spanningforge}
	\begin{array}{c}
	\{\zeta_i^{j,s} \mid\,  1\leq i,j \leq n,\ 0 \leq s < \min(\lambda_i, \lambda_j)\}\vspace{8pt}\\
	\end{array}
	\end{eqnarray}
	such that
	\begin{enumerate}
		\setlength{\itemsep}{4pt}
		\item $\lambda_i = \lambda_{i'}$ for all $i = 1,\ldots,n$;\smallskip
		\item $i = i'$ if and only if $\epsilon(-1)^{\lambda_i} = 1$;\smallskip
		\item $i' \in \{i-1, i, i+1\}$.
	\end{enumerate}
	and the following properties hold:
	\begin{enumerate}
		\setlength{\itemsep}{4pt}
		\item[(i)] The linear relations amongst \eqref{e:spanningforge} are of the form $\zeta_i^{j,s} = \ve_{i,j,s}\zeta_{j'}^{i', s}$ where 
		\begin{eqnarray}
		\begin{array}{c}
		\label{e:defineve}
		\ve_{i,j,s} := \varpi_{i\leq i'} \varpi_{j\le j'} (-1)^{\lambda_j-s};\vspace{8pt}\\
		\varpi_{i\le i'} := \left\{\begin{array}{cl} 1 & \text{ if } i \leq i'; \\ -1 & \text{ if } i > i'.\end{array}\right.
		\end{array}
		\end{eqnarray}
		
		\item[(ii)] $[\zeta_i^{j,s}, \zeta_k^{l,r}] = \delta_{il} \zeta_k^{j,r+s - (\lambda_i - 1)} -
		\delta_{jk} \zeta_i^{l,r+s - (\lambda_j - 1)}+ \varepsilon_{k,l,r}\big(\delta_{k,i'}
		\zeta_{l'}^{j,r+s - (\lambda_i - 1)} - \delta_{j,l'} \zeta_i^{k',r+s - (\lambda_j - 1)}\big)$.
		
		\item[(iii)] $\zeta_i^{j,s} \in \g^e(\lambda_i + \lambda_j - 2s - 2)$.
	\end{enumerate}
	In (i), (ii), (iii) the indexes $i,j,s,k,l,r$ vary in the ranges permitted by \eqref{e:spanningforge}, and in (ii) we adopt the convention $\zeta_i^{j,s} = 0$ for $s < 0$.
\end{Lemma}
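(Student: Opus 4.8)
\emph{Proof proposal.} The plan is to replace $e$ by the classical Springer--Steinberg representative of its orbit, compute the centraliser inside $\gl_N$, and then descend to $\g$ by passing to $\sigma$-invariants. First I would justify the reduction announced before the statement. The element $e$ of Subsection~\ref{ss:nilpotentsforclassical} lies in $\g(2)$, and since the Dynkin grading is good one has $\g^e(-2)=0$; by the non-degeneracy of the trace form this forces $[e,\g(0)]=\g(2)$, exactly as in the proof of Lemma~\ref{L:Dynkinoptimal}, so the $C_G(\lambda_e)$-orbit of $e$ is dense in $\g(2)$. A classical representative $e'$ of the same orbit built from a Jordan block decomposition of $\k^N$ compatible with the form as in \cite[Ch.~IV, \textsection 2]{SS70} can be taken in $\g(2)$ with $\lambda_e$ as its associated cocharacter, so the same argument shows $e'$ also lies in the dense $C_G(\lambda_e)$-orbit of $\g(2)$; hence $e'=\Ad(g)\,e$ for some $g\in C_G(\lambda_e)$, and $\Ad(g)$ is an isomorphism $\g^{e'}\xrightarrow{\sim}\g^e$ carrying the Dynkin grading to the Dynkin grading. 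It therefore suffices to prove the statement for $e'$, for which $\k^N=\bigoplus_{i=1}^n V_i$ decomposes transparently into cyclic $\k[e']$-modules $V_i$ of dimension $\lambda_i$, the form identifying $V_i$ with the dual of a block $V_{i'}$ of equal size, or restricting non-degenerately to $V_i$ when $i=i'$.

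Next I would describe $\gl_N^{e'}=\End_{\k[e']}(\k^N)$. Choosing $\k[e']$-module generators $v_i$ of $V_i$ spanning the lowest $\lambda_e$-weight line (so $\lambda_e(t)v_i=t^{1-\lambda_i}v_i$) and normalised against the form, $\gl_N^{e'}$ has a basis $\{\xi_i^{j,s}\mid 1\le i,j\le n,\ 0\le s<\min(\lambda_i,\lambda_j)\}$, where $\xi_i^{j,s}$ annihilates $V_l$ for $l\ne j$ and sends $e'^{\,k}v_j\mapsto e'^{\,k+\lambda_i-1-s}v_i$, powers of $e'$ outside the admissible range being read as $0$. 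A direct check gives the associative product $\xi_i^{j,s}\,\xi_k^{l,r}=\delta_{jk}\,\xi_i^{l,\,r+s-(\lambda_j-1)}$, hence the bracket $[\xi_i^{j,s},\xi_k^{l,r}]=\delta_{jk}\,\xi_i^{l,\,r+s-(\lambda_j-1)}-\delta_{il}\,\xi_k^{j,\,r+s-(\lambda_i-1)}$; and comparing $\lambda_e$-weights of source and target shows $\xi_i^{j,s}$ is homogeneous of Dynkin degree $\lambda_i+\lambda_j-2s-2$. A count ($\dim\gl_N^{e'}=\sum_{i,j}\min(\lambda_i,\lambda_j)$) confirms these are a basis.

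Then I would descend to $\g^{e'}$. Since $\sigma(e')=e'$, the involution $\sigma\colon X\mapsto -J^{-1}X^\top J$ preserves $\gl_N^{e'}$, and as $\Char(\k)\ne 2$ one has $\g^{e'}=(\gl_N^{e'})^\sigma$. Using the normalisation of the $v_i$ one computes $\sigma(\xi_i^{j,s})=\ve_{i,j,s}\,\xi_{j'}^{i',s}$ for the pairing involution $i\mapsto i'$ (so $\lambda_i=\lambda_{i'}$, with $i=i'$ exactly when the single block $V_i$ carries the restriction of the ambient form, which gives the stated parity condition, and $i'\in\{i-1,i,i+1\}$ because equal parts are consecutive in the chosen ordering) and for the explicit sign $\ve_{i,j,s}$; applying $\sigma$ twice forces $\ve_{i,j,s}\ve_{j',i',s}=1$. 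Putting $\zeta_i^{j,s}:=\tfrac12(1+\sigma)\xi_i^{j,s}=\tfrac12(\xi_i^{j,s}+\ve_{i,j,s}\xi_{j'}^{i',s})\in\g^{e'}$, these elements span $\g^{e'}$ because $\tfrac12(1+\sigma)$ projects $\gl_N^{e'}$ onto its $+1$-eigenspace; the identity $\ve_{i,j,s}\ve_{j',i',s}=1$ yields $\zeta_i^{j,s}=\ve_{i,j,s}\zeta_{j'}^{i',s}$, which is (i); homogeneity of $\xi_i^{j,s}$ together with the fact that $\sigma$ preserves the Dynkin grading (as $\lambda_e$ factors through $G$) gives (iii); and expanding $[\zeta_i^{j,s},\zeta_k^{l,r}]$ via the product above and re-symmetrising produces the four terms of (ii), the $\xi\xi$-part giving the two $\delta$-summands and the cross-terms involving $\sigma(\xi_k^{l,r})=\ve_{k,l,r}\xi_{l'}^{k',r}$ giving the two $\ve_{k,l,r}$-summands. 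A dimension count against the known value of $\dim\g^{e'}$ confirms there are no further relations.

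The routine ingredients are the $\gl_N$-computations and the formal descent; the real work is the sign bookkeeping. I expect the main obstacle to be pinning down the Gram matrix $J$ and the generators $v_i$ so as to identify $i\mapsto i'$ with the stated parity condition and to verify the precise formula $\ve_{i,j,s}=\varpi_{i\le i'}\varpi_{j\le j'}(-1)^{\lambda_j-s}$ together with $\ve_{i,j,s}\ve_{j',i',s}=1$. The self-dual blocks ($i=i'$) deserve separate care: there relation (i) forces $\zeta_i^{i,s}=0$ for those $s$ with $\lambda_i-s$ odd, and one must check that exactly $\lfloor\lambda_i/2\rfloor$ independent elements $\zeta_i^{i,s}$ survive, so that the surviving spanning set has precisely the size $\dim\g^{e'}$.
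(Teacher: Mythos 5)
Your route is essentially the paper's: reduce to a Springer--Steinberg representative, compute $\gl_N^{e}$ explicitly as $\End_{\k[e]}(\k^N)$, and descend to $\g^{e}$ as the $+1$-eigenspace of $\sigma$. The paper packages the reduction differently --- it invokes the uniqueness (up to $G^e$-conjugacy) of optimal cocharacters for which $e$ has weight $2$, together with the $\GL_N$-conjugacy of all non-degenerate $\epsilon$-forms, to conclude that it suffices to verify the lemma for \emph{any} choice of form, representative and optimal cocharacter; it then cites \cite[Lemmas 2 and 5]{PT14} for (i) and (ii) and does only (iii) by hand. Your density argument ($C_G(\lambda_e)\cdot e$ and $C_G(\lambda_e)\cdot e'$ both dense in $\g(2)$, hence equal) achieves the same reduction, but note it presupposes that the Springer--Steinberg representative has been realised inside the \emph{same} pair $(\g,\lambda_e)$, i.e.\ for the same bilinear form; that is exactly the point the paper's conjugacy-of-forms and conjugacy-of-cocharacters step is there to dispose of, so you should either include it or argue as the paper does. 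The one concrete error is the normalisation $\zeta_i^{j,s}:=\tfrac12(1+\sigma)\xi_i^{j,s}$: since the bracket of two such projections satisfies $[P\xi_a,P\xi_b]=\tfrac12\bigl(P[\xi_a,\xi_b]+P[\xi_a,\sigma\xi_b]\bigr)$ with $P=\tfrac12(1+\sigma)$, your $\zeta$'s would satisfy (ii) only up to an overall factor $\tfrac12$. The convention that makes (ii) come out exactly as stated (and the one used in \cite{PT14}) is $\zeta_i^{j,s}:=\xi_i^{j,s}+\ve_{i,j,s}\xi_{j'}^{i',s}=(1+\sigma)\xi_i^{j,s}$, which still spans the $+1$-eigenspace since $\Char(\k)\neq2$. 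With that correction, and with your (correct) treatment of the self-paired blocks where $\zeta_i^{i,s}$ may vanish, the argument goes through.
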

\begin{proof}
	The existence of an involution satisfying (1), (2), (3) is an immediate consequence of the parity conditions on $\lambda = (\lambda_1,\ldots,\lambda_n)$ described at the start of Subsection~\ref{ss:nilpotentsforclassical}. 
	
	It is well-known that all optimal cocharacters for $e$ with respect to which $e$ has weight $2$  are $C_G(e)$-conjugate. We sketch the argument here for the reader's convenience, drawing on the terminology used in \cite{Pr03}. Let $\gamma$ and $\gamma'$ be two such cocharacters, so that $\gamma(\k^\times), \gamma'(\k^\times)$ normalise $\k e$, hence lie in $\gamma(\k^\times) G^e$. As all maximal tori of $G^e$ are conjugate there is $g\in G^e$ such that $\gamma(\k^\times)$ and $g\gamma'(\k^\times)g^{-1}$ belong to the same maximal torus of $G^e$, which in turn is contained in a maximal torus of the optimal parabolic subgroup of $G$, say $T$.
	Then the optimal cocharacters $\gamma$ and $g\gamma' g^{-1}$ lie in $X_*(T)$. By one of the main result of the Kempf--Rousseau theory \cite[Theorem 2.1(iii)]{Pr03} we know that  $X_*(T)$ contains a unique primitive, optimal element.
	Hence there is a nonzero $d\in \Q$ such that $d\gamma=g\gamma' g^{-1}$. As $e$ has weight $2d$ for $d\gamma'$ 
	we get $d=1$, which shows that $\gamma$ and $\gamma'$ are $G^e$-conjugate.
	
	Using the fact that all non-degenerate symmetric and skew-symmetric forms on $\k^N$ are $\GL_N$-conjugate, along with the classification of nilpotent $G$-orbits and the remarks of the previous paragraph, we see that it suffices to prove the lemma for any choice of $G=G_{(\,\cdot\,,\,\cdot\,)}$ where ${(\,\cdot\,,\,\cdot\,)}$ is a non-degenerate form on $\k^N$ satisfying $(u,v) = \epsilon (v,u)$, any choice of nilpotent $e\in \g_{(\,\cdot\,,\,\cdot\,)}$ with partition $\lambda$, and any choice of $\gamma\in X_*(G)$ optimal for $e$ with the property that $e$ has weight $2$ with respect to $\gamma$.
	
	If we choose $\g = \g_{(\,\cdot\,,\,\cdot\,)}$ and $e$ in accordance with \cite[Section 2]{PT14} then (i) and (ii) follow from Lemmas 2 and 5 of {\it loc.\,cit.} The element $e$ is in Jordan normal form, and $\k^N$ admits a basis of the form $\{e^k w_i \mid 0 \le k < \lambda_i, \ 1\le i \le n\}$. We can define a cocharacter $\gamma : \k^\times \to \GL_N$ by setting $t\cdot e^k w_i = t^{1-\lambda_i + 2k} e^k w_i$. This cocharacter factors through $G_{(\,\cdot\,,\,\cdot\,)}$. It is straightforward to complete $\{{\rm d}_1\gamma, e\}$ to an $\sl_2$-triple, and so we may apply precisely the same argument as Lemma~\ref{L:Dynkinoptimal} to deduce that $\gamma$ is optimal for $e$. Now it remains to observe that the basis \eqref{e:spanningforge} of $\g^e$ is graded by the formula in part (iii) of the current Lemma with respect to this cocharacter. This follows from a short calculation.
\end{proof}

We write $\g = \bigoplus_{i\in \Z} \g(i)$ for the Dynkin grading coming from the Dynkin cocharacter for $e$. We say that $\lambda$ is {\it almost rigid} if $\lambda_i - \lambda_{i+1} \le 1$ for all $i\le n$, where $\lambda_{n+1}=0$ by convention, and we say that $e\in \g$ is {\it almost rigid} if the associated partition is so. It is well-known that all rigid nilpotent elements of $\g$ are almost rigid; see \cite{PT14}, for example.
The following is the first main result of this section. It should be mentioned that closely related results were obtained by Yakimova in \cite{Ya10} over the complex numbers. However, some of Yakimova's arguments rely on the fact that in characteristic zero $\sl_N$ 
does not contain nonzero scalar matrices which fails
in characteristic $p$ when $p\mid N$.
\begin{Proposition}
	\label{P:arprop}
	If $e$ is almost rigid then the Lie algebra
	$\g^e$ is generated by $\g^e(0)$ and $\g^e(1)$.
\end{Proposition}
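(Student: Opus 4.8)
The plan is to work with the explicit spanning set $\{\zeta_i^{j,s}\}$ for $\g^e$ provided by Lemma~\ref{L:basis-g-e}, and to show that every element of this spanning set lies in the subalgebra $\mathfrak{k}$ of $\g^e$ generated by $\g^e(0)$ and $\g^e(1)$. By part (iii) of that lemma, $\zeta_i^{j,s}$ has Dynkin degree $\lambda_i+\lambda_j-2s-2$, so the elements in degree $0$ and $1$ are precisely those with $\lambda_i+\lambda_j-2s=2$ or $3$; in particular the generators $\zeta_i^{i,\lambda_i-1}$ (degree $0$, corresponding to the "diagonal" part of $\g^e$) and the length-one "off-diagonal" elements $\zeta_i^{i\pm1,s}$ with $\lambda_i+\lambda_{i\pm1}-2s=3$ all lie in $\mathfrak{k}$. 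The key observation is that the almost-rigidity condition $\lambda_i-\lambda_{i+1}\le 1$ guarantees that consecutive parts differ by at most one, so that for adjacent indices $i,i+1$ one always has generators of degree $0$ or $1$ available to ``connect'' the $i$-th and $(i+1)$-st blocks.

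\textbf{Key steps.} First I would record, using (iii), exactly which $\zeta_i^{j,s}$ lie in degrees $0$ and $1$, and note that these include (a) all $\zeta_i^{i,\lambda_i-1}$, which span a subalgebra isomorphic to the reductive part $\g^e(0)$, and (b) for each adjacent pair $i,i+1$ at least one ``unit-length'' generator linking the two Jordan blocks. Second, I would induct on the degree $d=\lambda_i+\lambda_j-2s-2$ of $\zeta_i^{j,s}$, showing each such element is a bracket of lower-degree elements of $\g^e$. Using the bracket formula (ii), one computes $[\zeta_i^{j,s},\zeta_j^{k,r}]$ and picks up a term $\zeta_i^{k,\,r+s-(\lambda_j-1)}$ of strictly smaller degree (the degree drops by $\lambda_j-1$ from the sum of the two input degrees); by choosing the intermediate index $j$ appropriately and iterating, one reduces any $\zeta_i^{k,t}$ to a product of ``unit steps'' $\zeta_a^{a\pm1,\cdot}$ and ``diagonal'' elements $\zeta_a^{a,\lambda_a-1}$. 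Third — and this is where almost-rigidity is essential — I would check that at each stage the required unit step $\zeta_a^{a\pm1,s}$ that one needs actually has degree $0$ or $1$, or is itself obtained by bracketing such; the inequality $|\lambda_a-\lambda_{a\pm1}|\le 1$ ensures the relevant $s$ can be chosen so that $\lambda_a+\lambda_{a\pm1}-2s-2\in\{0,1\}$. The linear relations (i) must be tracked throughout to be sure that the elements one produces are nonzero and genuinely span.

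\textbf{Main obstacle.} I expect the hard part to be the bookkeeping in the induction: the bracket formula (ii) has four terms (two ``$\delta$'' terms from the $\gl_N$-type bracket and two ``$\varepsilon$'' correction terms reflecting the orthogonal/symplectic structure), and one must verify that, after taking a suitable bracket, the unwanted terms either vanish for index reasons ($\delta$'s not matching) or can themselves be absorbed into $\mathfrak{k}$ by induction. Handling the correction terms involving $i'$ and the cases $i=i'$ versus $i\ne i'$ (equivalently $\epsilon(-1)^{\lambda_i}=\pm1$) uniformly requires care, as does the boundary behaviour when $\lambda_i$ or $\lambda_j$ equals $1$ or $2$. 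A secondary subtlety is the characteristic-$p$ setting: one must ensure that no needed structure constant is divisible by $p$, which is where the hypothesis that $p$ is good (and $p\ne2$ in types B, C, D) enters. The strategy to contain the obstacle is to reduce everything to the single-block and two-adjacent-block computations, where the formulas are short, and then assemble the general case by connectivity of the ``block graph'' under almost-rigidity.
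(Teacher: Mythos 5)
Your plan is correct and follows essentially the same route as the paper: both work with the spanning set $\zeta_i^{j,s}$, use the degree formula $\zeta_i^{j,s}\in\g^e(\lambda_i+\lambda_j-2s-2)$, and bracket against degree-one elements of the form $\zeta_a^{b,\lambda_b-1}$, with almost rigidity supplying an intermediate part $\lambda_k$ differing by one so that the required connector lies in $\g^e(1)$ (the paper organises this as a case split on $|\lambda_i-\lambda_j|\in\{0,1,\ge 2\}$, with an induction on $\lambda_i$ only in the equal-parts case, rather than a single induction on Dynkin degree). Your worry about structure constants divisible by $p$ does not materialise, since all coefficients appearing are $\pm 1$ and the standing assumption $\Char(\k)\ne 2$ already covers the only delicate case.
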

\begin{proof}
	
	Thanks to \cite[Theorem~6]{PT14} we know that a complement to $[\g^e, \g^e]$ in $\g^e$ is given by
	$$\{\zeta_i^{i+1, \lambda_{i+1} - 1} \mid i = i', i+1 = (i+1)', \lambda_{i-1} \ne \lambda_i \ge \lambda_{i+1} \ne \lambda_{i+2}\}$$
	and, since $\lambda$ is almost rigid, these elements all lie in Dynkin degree zero by Lemma~\ref{L:spanningsetlemma}(iii). Therefore, to prove the Proposition, we shall demonstrate that $[\g^e(1), \g^e(r-1)] = \g^e(r)$ for all $r > 1$.

	For the rest of the proof we fix $r > 1$. In order to prove that $[\g^e(1), \g^e(r-1)] = \g^e(r)$, we choose $i,j,s$ in the ranges prescribed by \eqref{e:spanningforge} satisfying $\lambda_i + \lambda_j - 2s - 2 = r$, and show that $\zeta_i^{j,s}$ lies in $[\g^e(1), \g^e(r-1)]$. Throughout the proof we often use the fact that $i \neq j'$ whenever $\lambda_i \neq \lambda_j$, which is an easy consequence of property (2) in Lemma~\ref{L:spanningsetlemma}.
	
	First of all suppose that $|\lambda_i - \lambda_j| \ge 2$. Thanks to Lemma~\ref{L:spanningsetlemma}(i), we may suppose that $i < j$. Using the fact that $\lambda$ is almost rigid we know that there exists a $k$ with $i < k < j$ and $\lambda_i = \lambda_k+1 > \lambda_j$. Using \eqref{e:defineve}  and examining the Dynkin degrees in Lemma~\ref{L:spanningsetlemma}(iii) we have that $$\zeta_i^{j,s}\, = \,[\zeta_k^{j, s}, \zeta_i^{k, \lambda_k-1}] \in\, [\g^e(1), \g^e(r-1)]\qquad\quad\big(\forall
	\ \zeta_i^{j, s}\in \g^e(r)\big).$$
	Here we also use Lemma~\ref{L:spanningsetlemma}(ii)  keeping in mind that
	$\delta_{i,k'}=\delta_{j,k'}=0$ since $\lambda_k=\lambda_{k'}\neq \lambda_i$ and $\lambda_{k'}=\lambda_i-1>\lambda_j$.
	
	Next we suppose that $|\lambda_i - \lambda_j| = 1$; similar to the previous case we may (and shall) assume that $i < j$. Since $\lambda_i$ and $\lambda_j$ have opposite parity we either have $j \neq j'$ or $i \neq i'$. Since the arguments are similar in these two cases we only present the details for the former. The Dynkin degree of $\zeta_i^{j,s}$ is $r > 1$ and so we must have $s\neq \lambda_j -1$. The assumption $j \neq j'$ ensures that $\zeta_j^{j,s} \neq 0$ has a non-zero Dynkin degree, whilst $\zeta_i^{j,\lambda_j-1} \in \g^e(1)$. Now an application of Lemma~\ref{L:spanningsetlemma}(ii) confirms that $\zeta_i^{j,s} = [\zeta_j^{j,s}, \zeta_i^{j,\lambda_j-1}]$ is an element of $[\g^e(1), \g^e(r-1)]$ (one should keep in mind that in the present case $\delta_{j,i}=\delta_{i,j'}=\delta_{j,j'}=0$).
	
	It remains to consider the case where $\lambda_i = \lambda_j$ and $\lambda_i + \lambda_j - 2s - 2 = 2(\lambda_i - 1 - s)  = r > 1$. We are going to use induction on $\lambda_i$ to show that $\zeta_i^{j,s}$ lies in $[\g^e(1), \g^e(r-1)]$.  So we fix $i$ and assume, for all $k, l, t$ satisfying $\zeta_k^{l,t} \in \g^e(r)$, that $\zeta_k^{l, t}\in [\g^e(1), \g^e(r-1)]$ whenever $\lambda_k < \lambda_i$. Since $\lambda$ is almost rigid and we require $2(\lambda_i - 1 - s)  = r > 1$, there is $k$ with $\lambda_k=1$, but our induction starts with the case where $\lambda_i = \lambda_j = 2$ and $s = 0$. Using Lemma~\ref{L:spanningsetlemma}(ii) once again we get $[\zeta_k^{j, 0}, \zeta_i^{k,0}] = \zeta_i^{j,0} - \delta_{i,j} \zeta_{k}^{k,-1} = \zeta_{i}^{j,0}$, which deals with the induction base. The inductive step is very similar: we fix $i,j,s$ satisfying the conditions listed at the beginning of this paragraph and we choose $k$ such that $\lambda_k = \lambda_i - 1$. The conditions on $i,j,s$ ensure that $s < \min(\lambda_i, \lambda_k) = \lambda_k$ and so $\zeta_i^{k,s}$ occurs in \eqref{e:spanningforge}. Now we have $[\zeta_k^{j,\lambda_k-1}, \zeta_i^{k,s}] = \zeta_i^{j,s} - \delta_{i,j} \zeta_k^{k,s-1}$. By the inductive hypothesis we have $\zeta_k^{k,s-1} \in [\g^e(1), \g^e(r-1)]$ and it follows that $\zeta_i^{j,s} \in [\g^e(1), \g^e(r-1)]$ as well. This completes the proof of the proposition.
\end{proof}

\begin{Remark}
When $\g^e\,=\,\Lie(G^e)$,  the argument used in the proof of Proposition~\ref{P:arprop} also works in characteristic $2$. This situation arises when
the Hesselink stratum of $\mathcal{N}(\g)$ containing $e$ coincides with the adjoint orbit $\O(e)$; see \cite{Cl-Pr} for more detail on the Hesselink stratification of nilpotent cones.

\end{Remark}
\subsection{Nilpotent elements of $\g$ and their  counterparts over $\Z$}\label{ss:char0nilp}
Let $G_\Z$ be a split reductive $\Z$-group scheme with root datum
$(X(T_\Z), \Phi, X_*(T_\Z), \Phi^\vee)$ where $T_\Z$ is a fixed maximal 
torus of $G_\Z$ split over $\Z$.
We assume that $\Phi$ is an irreducible root system of type other than $\rm A$ and $X_*(T_\Z)$ is spanned over $\Z$ by the coroots of $\Phi$. 
We let $\Pi$ be a basis of simple roots in $\Phi$ and assume henceforth that
the lattice of coweights $X_*(T_\Z)$ of $T_\Z$ is a free $\Z$-module with basis $\{\alpha^\vee\,|\,\,\alpha\in\Pi\}$. Then 
the lattice of weights $X(T_\Z)$ of $T_\Z$ is spanned over $\Z$ by the fundamental weights corresponding to $\Pi$. Let $\Phi_+(\Pi)$ and $\Phi_-(\Pi)$
be the sets of positive and negative roots of $\Phi$, respectively.

Given a commutative associative ring $A$ with $1$ we denote by $G_A$ the group of $A$-points of $G_\Z$. The Lie algebra $\g_\Z$ of the group scheme $G_\Z$ is a lattice in the complex simple Lie algebra $\g_\C=\Lie(G_\C)$
spanned over $\Z$ by a Chevalley basis $\{h_\alpha\,|\,\,\alpha\in\Pi\}\cup\{e_\alpha\,|\,\,\alpha\in\Phi\}$.
Here $h_\alpha={\rm d}_e\alpha^\vee$. Note that $\Z e_\alpha=\,\Lie(U_{\Z,\alpha})$, where $U_{\Z,\alpha}$ is the root subscheme of $G_\Z$ corresponding to $\alpha\in\Phi$, and $\Lie(T_\Z)=\bigoplus_{\alpha\in\Pi}\,\Z\, h_\alpha$.
If $K$ is a field then $\Lie(G_K)\,\cong\,\g_\Z\otimes_\Z K$ as Lie algebras over $K$. 
	
In this Subsection we always assume that ${\rm char}(\k)$ is  good for the root system $\Phi$.
Our next goal is to assign to any nilpotent $G_\k$-orbit $\O_\k$ in $\g_\k$ a nice nilpotent element $e\in \g_\Z$ whose image $e\otimes_\Z 1$ in $\g_\k\,\cong\,\g_\k\otimes_\Z\k$ lies in $\O_\k$. 
We shall rely on basic notions of the Bala--Carter theory and use the notation introduced in \cite[2.6]{Pr03}. 

Let $\Pi=\{\alpha_1,\ldots,\alpha_l\}$. Given a subset $I$ of $\{1,\ldots,l\}$
we set $\Pi_I:=\{\alpha_i\,|\,\,i\in I\}$ and denote by $L_{I, \Z}$ the standard Levi subgroup of $G_\Z$ corresponding to $I$. Recall that the root system $\Phi_I$ of $L_{I, \Z}$ with respect to $T_\Z\subset L_{I,\Z}$ consists of all $\beta\in\Phi$ such that $\beta=\sum_{i\in I}n_i\alpha_i$ where $n_i\in \Z$. The Lie algebra of $L_{I,\Z}$ is spanned over $\Z$ by $\Lie(T_\Z)$ and all $e_\alpha$ with $\alpha\in \Phi_I$. Given a subset $J$ of $I$ we denote by $P_{I,J,\,\Z}$ the standard parabolic subgroup scheme of 
$L_{I,\Z}$ associated with $J$. The Lie ring $\p_{I,J,\,\Z}\,=\,\Lie(P_{I,J,\,\Z})$ is spanned over $\Z$ by $\Lie(T_{\Z})$ and all $e_\beta$ with $\beta=\sum_{i\in I}n_i\alpha_i\in \Phi_I$ such that $n_i\ge 0$ for all $i\in I\setminus J$. Note that $L_{J,\Z}$ is a Levi subgroup scheme of $P_{I,J,\,\Z}$ and $\p_{I,J,\,\Z}$ has a natural $\Z$-grading
$\p_{I,J,\,\Z}\,=\,\bigoplus_{k\ge 0}\,\p_{I,J,\,\Z}(k)$ such that
$\p_{I,J,\,\Z}(0)=\Lie(L_{J,\Z})$ and $\p_{I,J,\,\Z}(k)$ with $k>0$  is spanned over $\Z$ by all $e_\beta$ with $\beta=\sum_{i\in I}n_i\alpha_i$ and
$\sum_{i\in I\setminus J}n_i=k$. 

In what follows we shall be particularly interested in the graded components $\p_{I,J,\,\Z}(2)$ associated with {\it some} pairs $(I,J)$.
Let $W=W(\Phi)$ be the Weyl group of $G_\Z$ with respect to the split maximal torus $T_\Z$. Following \cite[2.6]{Pr03} we denote by $\P(\Pi)$ the set of all
pairs $(I,J)$ with  $J\subseteq I\subseteq \{1,\ldots,l\}$ such that
$P_{I,J,\,\C}$ is a {\it distinguished} parabolic subgroup of $L_{I,\C}$. Two pairs $(I,J)$ and $(I',J')$ ar said to be equivalent if there is $w\in W$ such that $w(\Pi_I)=\Pi_{I'}$ and $w(\Pi_J)=\Pi_{J'}$ and we write $[P(\Pi)]$
for the set of all equivalence classes. The main result of the Bala-Carter theory states that the set $[P(\Pi)]$ parametrises the nilpotent orbits in $\g_\C$ and it is proved in \cite[Theorem~2.6]{Pr03} that this continues to hold in good characteristic. We denote by $\O_{\C}(I,J)$ and $\O_\k(I,J)$
the nilpotent orbits of $\g_\C$ and $\g_\k$ associated with $(I,J)\in\P(\Pi)$.

In order to identify nice representatives of nilpotent orbits in $\g_\k$ and their counterparts in $\g_\Z$, we must study the interplay between the Bala-Carter theory, the Dynkin theory and the Kempf--Rousseau theory.

As explained in \cite[2.6]{Pr03}, for any $(I,J)\in\P(\Pi)$ there exists a unique element $h_{I,J}\,=\,\sum_{i\in I}a_i h_{\alpha_i} \in \Lie(T_\Z)$ such that $\alpha_k(h_{I,J})=0$ for all $k\in J$ and $\alpha_k(h_{I,J})=2$ for all $k\in I\setminus J$. We set $\lambda_{I,J}\,:=\,\sum_{i\in I}a_i\alpha_i^\vee$, an element of $X_*(T_\Z)$ (it is important here that $a_i\in \Z$ for all $i\in I$). There exists $w\in W$ such that $w(\lambda_{I,J})$ lies in the dual Weyl chamber associated with $\Pi$.
Placing the nonnegative integers $\alpha(w(\lambda_{I,J}))$ with $\alpha\in\Pi$ atop of the corresponding vertices of the Dynkin graph of $\Pi$ one obtains the weighted Dynkin diagram,
$\Delta_{I,J}$, of $\O_{\C}(I,J)$ shared by $\O_\k(I,J)$. 
The adjoint action of 
$\lambda_{I,J}({\mathbb G}_{\rm m})$ on $\g_\Z$ gives $\g_A=\g_\Z\otimes_\Z A$ a $\Z$-graded Lie ring structure and we denote by $\g_A(k)$ the $k$-th graded component of $\g_A$. It is immediate from the definition of $\lambda_{I,J}$ that $\p_{I,J,\,\Z}(k)\,=\, \p_{I,J,\,\Z}\cap \g(k)$ for all $k\in \Z$.

The remaining part of this section contains several results which resemble recent results of McNinch obtained in 
\cite[Section~3]{McN}. It should be stressed at this point that McNinch works locally, that is in the setting of a reductive group scheme defined 
over a discrete valuation ring whose residue field has characteristic $p>0$ whereas we need global statements which would apply uniformly over the whole range of residue fields.

\begin{Proposition}\label{e-Delta}
Let $\k$ be an algebraically closed field whose characteristic is good for the root system $\Phi$ and let $\lambda_{I,J}\colon\,{\mathbb G}_{\rm m}\to T_\Z$
be the cocharacter associated with a pair $(I,J)\in\P(\Pi)$. 
\begin{enumerate}
	\item [(i)\,] There exists a principal Zariski open subscheme  $\p_{I,J,\,\Z}(2)_{\rm reg}$ of $\p_{I,J,\,\Z}(2)$ such that  $\p_{I,J,\,\k}(2)_{\rm reg}\,=\,\p_{I,J,\,\k}(2)\cap \O_\k(I,J)$ for any field $\k$ as above. 
	
	\smallskip
	
	\item[(ii)\,]  The cocharacter $\lambda_{I,J}$ is optimal in the sense of the Kempf--Rousseau theory for any element of $\p_{I,J,\,\k}(2)_{\rm reg}$ and any field $\k$ as above.
	
	\smallskip
	
	\item[(iii)\,] There exits $e_{I,J}\in \p_{I,J,\,\Z}(2)_{\rm reg}\cap\O_\C(I,J)$ whose image $e_{I,J}\otimes_\Z 1$ in 
	$\g_\k=\g_\Z\otimes_\Z \k$ lies in $\p_{I,J,\,\k}(2)_{\rm reg}$ 
	for any field $\k$ as above.
	
	\smallskip
	
	\item [(iv)\,] The reductive Lie algebra $\g_\Q^{e_{I,J}}\cap\g_\Q(0)$ is split over $\Q$. 	
\end{enumerate}
\end{Proposition}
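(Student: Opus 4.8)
The plan is to identify $\g_\Q^{e_{I,J}}\cap\g_\Q(0)$ with the Lie algebra of the reductive centraliser $R:=Z_{G_\Q}(e_{I,J})\cap Z_{G_\Q}(\lambda_{I,J}(\mathbb{G}_{\rm m}))$ of $e_{I,J}$, and then to exhibit inside $R$ a maximal torus which is split over $\Q$. Put $e:=e_{I,J}$, $h:=h_{I,J}={\rm d}_1\lambda_{I,J}$ and $\l:=\l_{I,\Q}=\Lie(L_{I,\Q})$. By construction $e\in\p_{I,J,\,\Q}(2)\subseteq\l$, while $h\in[\l,\l]$ is the characteristic of the distinguished parabolic $P_{I,J}$, hence of the distinguished nilpotent $e$, inside $[\l,\l]$; thus $[h,e]=2e$, and by the standard theory of $\sl_2$-triples in characteristic zero the pair $\{e,h\}$ extends to an $\sl_2$-triple $(e,h,f)$ of $\g_\Q$ with $f\in[\l,\l]$ (see \cite[2.6]{Pr03}). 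In particular the $\lambda_{I,J}$-grading is good for $e$, so $\g_\Q^{e}\subseteq\bigoplus_{k\ge 0}\g_\Q(k)$, and a short computation in the $\sl_2$-module $\g_\Q$ (if $x\in\g_\Q^{e}\cap\g_\Q^{h}$ then $[x,f]\in\g_\Q^{e}\cap\g_\Q(-2)=0$) gives
$$\g_\Q^{e_{I,J}}\cap\g_\Q(0)\,=\,\g_\Q^{e}\cap\g_\Q^{h}\,=\,\g_\Q^{e,h,f}\,=\,\Lie(R),$$
which is reductive by $\sl_2$-theory.

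Next I would observe that the central torus $S:=Z(L_{I,\Q})^{\circ}$ is contained in $R$. Indeed $S$ is a subtorus of the split maximal torus $T_\Q$, so it commutes with $\lambda_{I,J}(\mathbb{G}_{\rm m})\subseteq T_\Q$; and being the centre of $L_{I,\Q}$ it centralises $e\in\l$ and $h\in\Lie(T_\Q)\subseteq\l$ (and also $f\in[\l,\l]$). Hence $S\subseteq R$. As a subtorus of a split torus defined over $\Q$, the torus $S$ is split over $\Q$, and $\Lie(S)=\mathfrak{z}(\l)$.

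The decisive point is to check that $S$ is a \emph{maximal} torus of $R$, not merely a subtorus. For this I would invoke the Bala--Carter classification in the form proved in \cite[Theorem~2.6]{Pr03}: if $S'$ is a maximal torus of $R$ then $Z_{G}(S')$ is a Levi subgroup of $G$ in which $e$ is distinguished, and by the very definition of the labelling $(I,J)$ this Levi is $G$-conjugate to $L_I$; since moreover $S'=Z(Z_G(S'))^{\circ}$, we get that the reductive rank of $R$ equals $\dim Z(L_{I,\Q})^{\circ}=\dim S$. A torus contained in $R$ of dimension equal to the reductive rank of $R$ is maximal, so $S$ is a maximal torus of $R$. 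Since $S$ is split over $\Q$, the connected reductive $\Q$-group $R$ is split; equivalently, $\mathfrak{z}(\l)=\Lie(S)$ is a Cartan subalgebra of $\g_\Q^{e_{I,J}}\cap\g_\Q(0)$ on which $\ad$ acts with integral eigenvalues (because $S\subseteq T_\Q$ and $\g_\Q$ decomposes into $T_\Q$-weight spaces with integral weights), whence $\g_\Q^{e_{I,J}}\cap\g_\Q(0)$ is split over $\Q$.

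The main obstacle I expect is not the splitness but the bookkeeping that legitimises the identification $\g_\Q^{e_{I,J}}\cap\g_\Q(0)=\Lie(R)$ with $R$ reductive: one must confirm that $\lambda_{I,J}$ induces a good grading for $e_{I,J}$ and that $h_{I,J}$ genuinely is the neutral element of an $\sl_2$-triple through $e_{I,J}$, so that the weight-zero part of $\g_\Q^{e_{I,J}}$ is the centraliser of the triple. Once this is settled, the Bala--Carter description of the maximal tori of reductive centralisers forces $S=Z(L_{I,\Q})^{\circ}$ to be maximal in $R$ and the conclusion is immediate. Should the $\sl_2$-bookkeeping prove inconvenient, one can instead argue at the group level, deducing the good-grading property from the optimality of $\lambda_{I,J}$ established in part~(ii), the existence of good-grading cocharacters \cite[Theorem~A]{Pr03}, and the $G_\Q^{e_{I,J}}$-conjugacy of optimal cocharacters used in the proof of Lemma~\ref{L:basis-g-e}.
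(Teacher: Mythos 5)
Your proposal addresses only part (iv) of the Proposition, and this is the essential gap: parts (i)--(iii) are where the real work lies, and they are not touched. For (i) one needs the existence of a homogeneous semi-invariant $\varphi\in S(\g_\Z(2)^*)$ for $C_{G}(\lambda_{I,J})$ whose nonvanishing locus in $\p_{I,J,\,\k}(2)$ cuts out exactly $\p_{I,J,\,\k}(2)\cap\O_\k(I,J)$ \emph{uniformly for every good characteristic}; the paper extracts this from \cite[Theorems~2.3, 2.6]{Pr03} and defines $\p_{I,J,\,\Z}(2)_{\rm reg}$ as the principal open subscheme $\p_{I,J,\,\Z}(2)_\varphi$. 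Part (iii) is the genuinely delicate point: one must produce an \emph{integral} vector $e_{I,J}$ with $\varphi(e_{I,J})\not\equiv 0\pmod p$ for all good $p$ simultaneously. The paper reduces this to the surjectivity $[\bar e_{I,J},\p_{I,J,\,\k}(0)]=\p_{I,J,\,\k}(2)$, then to the distinguished case, and finally verifies it case by case using the tables of Lawther--Testerman for exceptional types and the explicit pyramid representatives of Subsection~\ref{ss:nilpotentsforclassical} (together with Lemma~\ref{L:Dynkinoptimal}) for types $\rm B$, $\rm C$, $\rm D$. None of this is automatic, and your proposal in fact cites ``the optimality of $\lambda_{I,J}$ established in part~(ii)'' as an input, which is circular for a proof of the whole Proposition.

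Your argument for (iv) itself is essentially sound and close in spirit to the paper's: both identify the centre of the Levi $L_{I}$ as the relevant split torus. The paper works directly at the Lie-algebra level, noting that $\t_{e,\Q}=\{h\in\Lie(T_\Q)\mid \alpha_i(h)=0,\ i\in I\}$ is a $\Q$-split toral subalgebra whose centraliser in $\g_\Q^{e}$ is $\t_{e,\Q}$ plus nilpotent elements (because $e$ is distinguished in $[\Lie(L_{I,\Q}),\Lie(L_{I,\Q})]$), so it is automatically a maximal toral subalgebra of $\g_\Q^{e}$ sitting inside the reductive algebra $\g_\Q^{e}\cap\g_\Q(0)$. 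You instead establish maximality of $Z(L_{I,\Q})^{\circ}$ in the reductive centraliser via the group-level uniqueness in the Bala--Carter correspondence; that route works but imports more machinery (and your notation $R$ for the reductive centraliser collides with the paper's localised ring $R$). Either way, (iv) is the least of the four assertions, and the proposal as written does not constitute a proof of the Proposition.
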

\begin{proof}
Parts~(i) follows from \cite[Theorem~2.6(ii)]{Pr03} and the first part of the proof of \cite[Theorem~2.3]{Pr03}. A slightly more general result valid in arbitrary characteristic can be found in \cite[Remark~7.3(3)]{Cl-Pr}.
More precisely, it is explained in {\it loc.\,cit} that there exists a nonzero homogeneous $\varphi\in S(\g_\Z(2)^*)$ semi-invariant under the adjoint action of the Levi subgroup $C_{G_\C}(\lambda_{I,J})$ of $G_\C$ and such $$\p_{I,J,\,\k}(2)\cap \O_\k(I,J)\,=\,\{x\in  \p_{I,J,\,\k}(2)\,|\,\, \bar{\varphi}(x)\ne 0\}$$ where $\bar{\varphi}$ is the image of $\varphi$ in $S(\g_\k(2)^*)$. Hence we can define
$\p_{I,J,\,\Z}(2)_{\rm reg}\,:=\,\p_{I,J,\,\Z}(2)_\varphi$, a principal open subscheme of $\p_{I,J,\,\Z}(2)$. Part~(ii) is now an immediate consequence of \cite[Theorem~2.6(iii) and Theorem~2.3(i)]{Pr03}.

In order to prove (iii) we need to find an element $e_{I,J}\in \p_{I,J,\,\Z}(2)$ such that $\varphi(e_{I,J})\not\equiv 0 \pmod{p}$ for any good prime $p$. Note that $\bar{\varphi}$ does not vanish on $\p_{I,J,\,\k}(2)$ because $\p_{I,J,\,\k}(2)\cap \O_\k(I,J)\ne \emptyset$.
Therefore, it suffices to ensure that $\bar{e}_{I,J}$, the image of $e_{I,J}$ in $\p_{I,J,\,\k}(2)$, has the property that 
\begin{equation}\label{surj}
[\bar{e}_{I,J},\p_{I,J,\,\k}(0)]\,=\,\p_{I,J,\,\k}(2)	
\end{equation}	
holds in arbitrary good characteristic. Indeed, as $\p_{I,J,\,\k}(0)\,=\,\Lie(L_{J,\k})$, it follows from (\ref{surj})
that the adjoint $L_{J,\k}$-orbit of $\bar{e}_{I,J}$ is dense in $\p_{I,J,\,\k}(2)$, forcing $\bar{\varphi}(\bar{e}_{I,J})\ne 0$.

Our assumptions on $G_\Z$ imply that the Levi subgroup $L_{J,\k}$ satisfies the standard hypotheses  (cf. Subsection~\ref{ss:standardhypotheses}). This means that in our search of an element $e:=e_{I,J}$ satisfying (\ref{surj}) we may assume without loss of generality that 
$I=\{1,\ldots,l\}$ and 
$L_{J,\Z}=G_\Z$. As $\g_\k$ admits a non-degenerate symmetric bilinear form, we seek an element $e\in \g_\Z(2)$ whose image $e_\k\in\g_{\k}(2)$ has the property that 
$\g_\k^{e_\k}\cap\g_\k(-2)\,=\,\{0\}$.

For $\Phi$ exceptional, suitable representatives in 
$\O(I,J)\cap \g_\Z(2)$ can found in the tables of \cite{LT2} (one only needs to look at the pages devoted to the distinguished nilpotent $G_\k$-orbits). If $\Phi$ is of type $\rm B$, $\rm C$ or $\rm D$, one can choose  $e=\sum\sigma_{i,j}e_{i,j}$ as described in Subsection~\ref{ss:nilpotentsforclassical}. Again, one only needs to look at the cases where $e$ is distinguished (which happens when the partition $\lambda=(\lambda_1\ge\lambda_2\ge\cdots\ge \lambda_n)$  associated with $e$ has pairwise  distinct parts and $i=i'$ for all $i\le n$).
The Chevalley bases (\ref{e:ChevalleybasisOrth}) and (\ref{e:ChevalleybasisSymp}) provide us with a natural choice of a maximal torus $T_\Z\subset G_\Z$. Since the Dynkin cocharacter $\lambda_e\in X_*(T_\Z)$ is optimal for $e$ by Lemma~\ref{L:Dynkinoptimal} and $e$ has weight $2$ with respect to $\lambda_e$ we can find a basis of simple roots $\Pi=\{\alpha_1,\ldots,\alpha_l\}$ of the root system $\Phi=\Phi(G_\Z,T_\Z)$ and a subset $J$ of $\{1,\ldots,l\}$ such that
$\lambda_e=\lambda_{\{1,\ldots,l\},J}$ and $e=e_{\{1,\ldots,l\},J}$

Part~(iv) follows from the fact that any $e=e_{I,J}\in\p_{I,J,\,\Q}(2)$ is a distinguished nilpotent element of the derived subalgebra of $\Lie(L_{I,\Q})$. Let $\t_{e,\Q}:=\{h\in \Lie(T_\Q)\,|\,\,\alpha_i(h)=0\ \, \text {for all}\ i\in I\}$. Since the Levi subalgebra $$\Lie(L_{I,\Q})\,=\,\t_{e,\Q}\oplus [\Lie(L_{I,\Q}),\Lie(L_{I,\Q})]$$ coincides with the centraliser of $\t_{e,\Q}$ in $\g_\Q$ and the centraliser of $e$ in the derived subalgebra of $\Lie(L_{I,\Q})$ consists of nilpotent elements, we see that $\t_{e,\Q}$ is a $\Q$-split maximal toral subalgebra of $\g_\Q^e$. Since it is well-known that the Lie algebra $\g_\Q^e\cap \g_\Q(0)$ is reductive and the maximal toral subalgebra $\t_{e,\Q}$ of $\g_\Q^e$ is contained in $\g_\Q^e\cap \g_\Q(0)$, this completes the proof of the Proposition.
\end{proof}
\subsection{Properties of the centraliser of $e_{I,J}$ in $\g_R$} \label{ss:geR}
Recall the ring $R$ from Subsection~\ref{ss:standardhypotheses}. In this Subsection we fix $(I,J)\in\P(\Pi)$
and denote by $e_\k$ the image of $e:=e_{I,J}$ in $\g_\k=\g_R\otimes_R\k$.
Set $\t_A\,:\,=\Lie(T_\Z)\otimes_\Z A$. Our choice of $R$ implies that the inverse of the Cartan matrix associated with $\Pi$ has entries in $R$. As a consequence, there exist $t_1,\ldots, t_l\in \t_R$ such that $\alpha_i(t_j)=\delta_{i,j}$ for all $1\le i,j\le l$. 
We denote by $\t_{e,R}$ the $R$-span of all $t_i$ with $i\not\in I$
and identify the sublagebra $\t_{e,\Q}$  
introduced in proof of Proposition~\ref{e-Delta}(iv)
with $\t_{e,R}\otimes_R\Q$. It is clear (and will be important in what follows) that the Lie ring
$\g_R$ decomposes into a direct sum 
\begin{equation}
\label{Rweights}
\g_R\,=\,\textstyle{\bigoplus}_{\,\eta\,\in \,\t_{e,R}^*}\,\g_{R,\,\eta},\qquad\ \g_{R,\,\eta}\,=\,\{x\in\g_R\,|\,\,[t,x]=\eta(t)x\, \text{ for all }\  t\in\t_{e,R}\}.\end{equation}
Moreover, each weight space $\g_{R,\,\eta}$ is a free $R$-module of finite rank and
$$\g_{R,0}\,=\,\l_{R}\,=\,\t_{e,R}\oplus [\l_R,\l_R].$$
The Lie ring $[\l_R,\l_R]$ is freely generated as an $R$-module by all $e_\beta$ with $\beta\in\Phi_I$ and all $h_{\alpha_i}$ with $i\in I$. 
We denote by $\Phi_e$ the set of all nonzero $\eta\in\t_{e,R}^*$ such that
$\g_{R,\,\eta}\ne \{0\}$. It is straightforward to see that each $\eta\in\Phi_e$ is obtained by restricting
a root $\beta\in\Phi$ to $\t_{e,R}$. 
Since all bad primes of $\Phi$ are invertible in $R$ it follows from basic properties of root systems that that 
$\eta(t_i)\in R^\times\cup\{0\}$ for all $\eta\in\Phi_e$ and all $i\in  \{1,\ldots,l\}\setminus I$. 
The adjoint action of $\lambda_e:=\lambda_{I,J}\in X_*(T_\Z)$ on $\g_R$ induces $\Z$-gradings on all direct summands $\g_{R,\,\eta}$. 

Since  $\g_R^e:=\g_R\cap\g_\Q^e$
and all $\g_{R,\,\eta}$ are $(\ad\,e)$-stable and $\lambda_e$ is optimal for $e\in\g_\C$, we have that
$$\g_R^e\,=\, \textstyle{\bigoplus}_{i\in\Z_+,\,\eta\in\Phi_e}\,\g_{R,\,\eta}^e(i).$$
Since $\ad\,e$ is nilpotent, $\g_{R,\,\eta}^e\ne \{0\}$ for every $\eta\in\Phi_e$.

Let $(\,\cdot\,|\,\cdot\,)$ be the $W$-invariant scalar product on the Euclidean space $\mathbb R\Phi$ such that $(\alpha|\alpha)=2$ for every short root $\alpha\in\Phi$. Put $d:=(\tilde{\alpha}|\tilde{\alpha})/(\alpha_0|\alpha_0)$ where $\tilde{\alpha}$ and $\alpha_0$ are the highest root and  the maximal short root in $\Phi_+(\Pi)$, respectively.
We take for $\kappa$ the {\it normalised Killing form} on $\g_\C$ which has the property that
$$\kappa(h_\alpha,h_\beta)\,=\,\frac{4d(\alpha|\beta)}
{(\alpha|\alpha)(\beta|\beta)}\  \  \ \text{ and } \ \ \kappa(e_\alpha,e_{-\beta})\,=\,\delta_{\alpha,\beta}\cdot \frac{2d}{(\alpha|\alpha)}\qquad\ (\forall\,\alpha,\beta\in\Phi).$$
This form is $\Z$-valued on $\g_\Z$ and
induces an $(\Ad\, G_\k)$-invariant 
symmetric bilinear form  on $\g_\k$ for any
algebraically closed field $\k$ such that the radical of $\g_\k$ coincides with the centre; see \cite[Lemma~2.2]{Pr97}. Of course, it is well-known that if $\z(\g_\k)\ne\{0\}$ then either $p$ is a bad prime for $\Phi$ or $G$ is of type ${\rm A}_{kp-1}$ for some $k\ge 1$.

In our situation, all bad primes of $\Phi$ are invertible in $R$ and $\Phi$ is not of type $\rm A$. Therefore, it is straightforward to see that the determinant of
the Gram matrix of $\kappa$ with respect to a Chevalley basis of $\g_\Z$ is invertible in $R$. We can use $\kappa$ to identify $\g_R$ with the dual $R$-module $\g_R^*$. This means that for every $\chi\in\g_R^*$ there exists a unique $x\in \g_R$ such that $\chi=\kappa(x,-)$.
Given an $R$-submodule $E$ of $\g_R$ we set $E^\perp:=\{x\in\g_R\,|\,\,\kappa(x,E)=0\}$. 

Out next lemma is well known, but since it is used several times in what follows a short proof is included for the reader's convenience.
\begin{Lemma}\label{R-mod} Let $V$ be a free $R$-module of rank $n$
and	let $W$ be a nonzero $R$-submodule of $V$ generated by elements $w_1,\ldots, w_m\in V$.
Suppose further that for every prime $p$ of $R$ the subspace of the $\mathbb{F}_p$-vector space $V\otimes_R\mathbb{F}_p\cong V/pV$ spanned by the cosets $w_1+pV,\ldots, w_m+pV$ has dimension equal to $\dim_\Q W_\Q$ where $W_\Q$ is the $\Q$-span of $W$ in $V\otimes_R \Q$. Then $W$ is a direct summand of $V$.
\end{Lemma}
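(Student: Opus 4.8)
The statement is a standard fact about finitely generated modules over a Dedekind domain (or PID); the plan is to show that the quotient $V/W$ is torsion-free, hence free, whence the surjection $V\twoheadrightarrow V/W$ splits and $W$ is a direct summand. First I would recall that $R$ is a localisation of $\Z$, so $R$ is a principal ideal domain and every finitely generated torsion-free $R$-module is free; moreover, for a submodule $W\subseteq V$ of a free module, $W$ itself is free of rank equal to $\dim_\Q W_\Q$. The quotient $M:=V/W$ is a finitely generated $R$-module, so $M\cong R^k\oplus T$ where $T$ is the torsion submodule; it suffices to prove $T=0$.

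The key step is the dimension-counting hypothesis. Fix a prime $p$ of $R$ (i.e. a prime of $\Z$ not among the bad primes, so that $R/pR\cong\mathbb{F}_p$). Tensoring the exact sequence $0\to W\to V\to M\to 0$ with $\mathbb{F}_p$ gives a right-exact sequence
\begin{equation*}
W\otimes_R\mathbb{F}_p\,\longrightarrow\, V\otimes_R\mathbb{F}_p\,\longrightarrow\, M\otimes_R\mathbb{F}_p\,\longrightarrow\, 0,
\end{equation*}
and the image of the first map is precisely the span of the cosets $w_i+pV$, since the $w_i$ generate $W$. By hypothesis this image has dimension $\dim_\Q W_\Q=\rank_R W$, so
\begin{equation*}
\dim_{\mathbb{F}_p}(M\otimes_R\mathbb{F}_p)\,=\,\rank_R V-\rank_R W\,=\,\rank_R M\,=\,k.
\end{equation*}
On the other hand, writing $M\cong R^k\oplus T$ with $T\cong\bigoplus_j R/\p_j^{a_j}$ a finite direct sum of cyclic torsion modules, we get $\dim_{\mathbb{F}_p}(M\otimes_R\mathbb{F}_p)=k+\#\{j: \p_j=pR\}$. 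Comparing, we conclude that no $\p_j$ equals $pR$; since $p$ was an arbitrary prime of $R$ this forces $T=0$, so $M$ is free, the sequence $0\to W\to V\to M\to 0$ splits, and $W$ is a direct summand of $V$.

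I do not anticipate a genuine obstacle here: the only points requiring care are that $R$ is indeed a PID (true, being $\Z[\frac1n]$ for $n$ a product of the bad primes), that $R/pR\cong\mathbb{F}_p$ for each such $p$ so that the vector-space identification in the hypothesis matches the reduction mod $pR$ used above, and that the image of $W\otimes_R\mathbb{F}_p\to V\otimes_R\mathbb{F}_p$ is exactly the span of the reduced generators (immediate from right-exactness of $-\otimes_R\mathbb{F}_p$). One could alternatively argue locally — it is enough to check that $W_{\p}$ is a direct summand of $V_{\p}$ for every maximal ideal $\p=pR$, where $R_{\p}$ is a discrete valuation ring and the claim reduces to the statement that $V_\p/W_\p$ has no $p$-torsion, again extracted from the dimension count — but the global torsion-free argument above is cleaner and is what I would write up.
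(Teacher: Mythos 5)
Your argument is correct. It rests on the same two pillars as the paper's proof — the structure theory of finitely generated modules over the PID $R=\Z[\tfrac1n]$, and the observation that the mod-$p$ dimension hypothesis at \emph{every} prime of $R$ forces the "defect" to vanish — but the execution differs. The paper diagonalises the inclusion $W\hookrightarrow V$ directly: it writes the generators $w_i$ in a free basis of $V$, puts the resulting matrix in Smith normal form to obtain a basis $b_1,\ldots,b_n$ of $V$ with $W$ generated by $r_1b_1,\ldots,r_db_d$, and then uses the hypothesis to conclude each $r_i\in R^\times$. You instead pass to the quotient $M=V/W$, compute $\dim_{\mathbb F_p}(M\otimes_R\mathbb F_p)$ via right-exactness of $-\otimes_R\mathbb F_p$ (correctly identifying the image of $W\otimes_R\mathbb F_p$ with the span of the cosets $w_i+pV$), and compare with the decomposition $M\cong R^k\oplus T$ to kill the torsion prime by prime. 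The two are essentially equivalent — the invariant factors $r_i$ of the paper's proof are exactly the elementary divisors of your $T$ — but your version avoids choosing bases and isolates the torsion-freeness of $V/W$ as the real content, which is arguably the cleaner write-up; the paper's version is more explicit and matches how the lemma is actually applied later (to concrete spanning sets inside $\g_R$). Either is acceptable.
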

\begin{proof}	
In order to prove the lemma we are going to use the Smith normal form of rectangular matrices over $R$.
Expressing the generators $w_i$ of $W$ via a free basis of $V$ one obtains a rectangular matrix $M\in\Mat_{m\times n}(R)$. As $R$ is a principal ideal domain there exist invertible $X\in \Mat_m(R)$ and $Y\in \Mat_n(R)$ such that the rectangular matrix $XMY$ is diagonal. It follows that
there exist a free basis $b_1,\ldots, b_n$ of $V$ and elements $r_1,\ldots, r_n\in R$ such that the $R$-module
$W$ is generated by $r_1b_1,\ldots, r_nb_n$. We may assume further that there exists $d\in\{1,\ldots, n\}$ such that $r_i\ne 0$ for
$i\le d$ and $r_i=0$ for all $i>d$. Then $d=\dim W_\Q$.
Since for every prime $p$ of $R$ 
the image of $W$ in $V/pV$ has dimension $d$ over $\mathbb{F}_p$, it must be that $b_i\in R^\times$ for all $1\le i\le d$. As a consequence, $W$ is a direct summand of $V$.
\end{proof}
Our next lemma is a refined version of Spaltenstein's result proved in \cite{Sp}.
\begin{Lemma}\label{c_g-R-e} The following are true:
	\begin{enumerate}
		\item [(i)\,] $\g_{R,\,\eta}^e$ is a direct summand of $\g_{R,\,\eta}$ for every $\eta\in\Phi_e$;
		
		\smallskip
		
		\item[(ii)\,] $[e,\g_R(i)]\,=\, \g_R(i+2)$ for all $i\ge 0$;
		
		\smallskip
		
		\item[(iii)\,] $[e,\g_R]\,=\,(\g_R^e)^\perp$ is a direct summand of $\g_R$.
	\end{enumerate}
	\end{Lemma}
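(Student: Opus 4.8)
The plan is to prove the three parts in the order (i), (ii), (iii), freely passing between the objects over $R$ and their specialisations over $\Q$ and over the residue fields $\mathbb{F}_p$ (equivalently $\k=\overline{\mathbb{F}}_p$) of $R$; the bridge is Lemma~\ref{R-mod} together with the fact that $R$ is a principal ideal domain. For (i), observe that $e=e_{I,J}\in\g_{R,0}=\l_R$ centralises $\t_{e,R}$, so $\ad e$ preserves each weight space $\g_{R,\,\eta}$ and $\g_{R,\,\eta}^e=\Ker(\ad e\colon\g_{R,\,\eta}\to\g_{R,\,\eta})$. Since $\g_{R,\,\eta}$ is a free $R$-module (it is spanned by a part of a Chevalley basis of $\g_R$, together with a basis of $\t_R$ when $\eta=0$) and $R$ is a PID, the quotient $\g_{R,\,\eta}/\g_{R,\,\eta}^e\cong[e,\g_{R,\,\eta}]$ is finitely generated and torsion-free, hence free, so the short exact sequence $0\to\g_{R,\,\eta}^e\to\g_{R,\,\eta}\to[e,\g_{R,\,\eta}]\to 0$ splits and $\g_{R,\,\eta}^e$ is a direct summand of $\g_{R,\,\eta}$. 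Summing over $\eta$ and using the decomposition $\g_R^e=\bigoplus_{\eta}\g_{R,\,\eta}^e$ recalled before the lemma, we get at the same time that $\g_R^e$ is a direct summand of $\g_R$, a fact needed in part (iii).

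For (ii), the key input is that $\lambda_e=\lambda_{I,J}$ is optimal for $e$ (Proposition~\ref{e-Delta}(ii)), hence by \cite[Theorem~A]{Pr03} induces a good grading for $e$ over $\Q$ and over every algebraically closed field of good characteristic; thus $\g_K^e(j)=\{0\}$ for all $j<0$ over any such field $K$. Because $\kappa$ restricts to a non-degenerate $\ad$-invariant form on $\g_K$ (its Gram determinant on a Chevalley basis of $\g_\Z$ is a unit of $R$) and $\lambda_e(K^\times)$ preserves $\kappa$, the pairing $\g_K(i)\times\g_K(-i)\to K$ is perfect, and the relation $\kappa([e,x],y)=-\kappa(x,[e,y])$ identifies the annihilator of $[e,\g_K(i)]$ inside $\g_K(-i-2)$ with $\g_K^e(-i-2)$; for $i\ge 0$ this vanishes, so $[e,\g_K(i)]=\g_K(i+2)$. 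Applying this over $\Q$ and over each $\k=\overline{\mathbb{F}}_p$ shows that the submodule $[e,\g_R(i)]$ of $\g_R(i+2)$ has the same rank as $\g_R(i+2)$ and reduces onto $\g_{\mathbb{F}_p}(i+2)$ modulo every prime of $R$; Lemma~\ref{R-mod}, applied to the generators $[e,x_\beta]$ with $x_\beta$ running over a basis of $\g_R(i)$, then makes $[e,\g_R(i)]$ a full-rank direct summand of $\g_R(i+2)$, i.e. $[e,\g_R(i)]=\g_R(i+2)$. As a by-product, telescoping $\dim_K\g_K^e=\sum_{i\ge 0}(\dim\g_K(i)-\dim\g_K(i+2))=\dim\g_K(0)+\dim\g_K(1)$ — a count of $\lambda_e$-weight multiplicities independent of $K$ — yields $\dim\g_{\mathbb{F}_p}^e=\dim\g_\Q^e=\rank_R\g_R^e$ for every prime $p$ of $R$, and in particular $\g_R^e\otimes_R\mathbb{F}_p=\g_{\mathbb{F}_p}^e$.

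For (iii), apply Lemma~\ref{R-mod} once more, this time to $W=[e,\g_R]$ inside $V=\g_R$ with the generating set $\{[e,x_\beta]\}$, $x_\beta$ a Chevalley basis of $\g_R$: modulo a prime $p$ the reductions span $[e_{\mathbb{F}_p},\g_{\mathbb{F}_p}]$, of dimension $\dim\g-\dim\g_{\mathbb{F}_p}^e=\dim\g-\dim\g_\Q^e=\dim_\Q[e,\g_\Q]=\dim_\Q W_\Q$ by the by-product of (ii). Hence $[e,\g_R]$ is a direct summand of $\g_R$. Invariance of $\kappa$ gives $\kappa([e,x],y)=-\kappa(x,[e,y])=0$ for $x\in\g_R$ and $y\in\g_R^e$, so $[e,\g_R]\subseteq(\g_R^e)^\perp$; since $\kappa$ is unimodular on $\g_R$ and $\g_R^e$ is a direct summand (part (i)), $(\g_R^e)^\perp$ is itself a direct summand of $\g_R$ of rank $\rank_R\g_R-\rank_R\g_R^e=\rank_R[e,\g_R]$. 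An inclusion of direct summands of a free module having equal rank is an equality, so $[e,\g_R]=(\g_R^e)^\perp$, which completes the proof.

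I expect no genuine difficulty in the individual steps; the main obstacle is organisational, namely guaranteeing that the reductions behave correctly weight-space by weight-space and, above all, that the centraliser $\g_R^e$ specialises as $\g_R^e\otimes_R\mathbb{F}_p=\g_{\mathbb{F}_p}^e$ for \emph{every} prime $p$ of $R$. This is precisely why part (ii) must precede part (iii): it is the telescoping identity for $\dim\g^e$ that forces $\dim\g_{\mathbb{F}_p}^e=\dim\g_\Q^e$, without which the rank bookkeeping needed to identify $[e,\g_R]$ with $(\g_R^e)^\perp$ would not close up. Everything else is formal PID theory together with the already-established optimality of $\lambda_{I,J}$ and unimodularity of $\kappa$ over $R$.
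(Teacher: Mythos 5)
Your proof is correct and follows essentially the same route as the paper: part (i) by noting that $\g_{R,\,\eta}/\g_{R,\,\eta}^e$ is finitely generated and torsion-free over the PID $R$, and parts (ii) and (iii) by checking the corresponding equalities fibrewise over $\Q$ and over each residue field of good characteristic and then invoking Lemma~\ref{R-mod} together with the inclusion $[e,\g_R]\subseteq(\g_R^e)^\perp$ coming from the invariance of $\kappa$. The only (harmless) divergence is that you re-derive the field-level facts $[e_\k,\g_\k(i)]=\g_\k(i+2)$ and $\dim\g_\k^{e_\k}=\dim\g_\C^{e}$ from the goodness of the grading and $\kappa$-duality, where the paper simply cites \cite[Theorem~A(ii)]{Pr03}.
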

\begin{proof}
If $r\in R\setminus\{0\}$ and $x\in\g_{R,\,\eta}$ are such that
$rx\in \g_{R,\,\eta}^e$ then $r[e,x]=[e,rx]=0$ forcing $x\in\g_{R,\,\eta}^e$.
This shows that the finitely generated $R$-module $\g_{R,\,\eta}/\g_{R,\,\eta}^e$
is torsion free. As $R$ is a principal ideal domain this implies that $\g_{R,\,\eta}/\g_{R,\,\eta}^e$ is a free $R$-module of finite rank.
So there exist $v_1,\ldots, v_s\in \g_{R,\,\eta}$ whose cosets modulo $\g_{R,\,\eta}^e$ form a free $R$-basis of $\g_{R,\,\eta}/\g_{R,\,\eta}^e$.
Let $V_\eta$ be the $R$-submodule of $\g_{R,\,\eta}$ generated by 
the $v_i$'s. Then $\g_{R,\,\eta}\,=\,V_\eta\oplus \g_{R,\,\eta}^e$ which proves part~(i).

In order to prove (ii) we apply Lemma~\ref{R-mod} to the free $R$-module
$V=\g_R(i+2)$ and its submodule $W=[e,\g_R(i)]$.
Let $p$ be a prime of $R$ and let $\k$ be an algebraically closed field of characteristic $p$. Then $p$ is a good prime for $\Phi$ and it follows from \cite[Theorem~A(ii)]{Pr03} that
$$\dim_\k[e_\k,\g_\k]\,=\,\dim_\C[e,\g_\C]\,=\,\dim_\C\g(i+2)\,=\,\dim_\k\g_\k(i+2).$$
Since this holds for any prime of $R$, Lemma~\ref{R-mod} entails that $[e,\g_R(i)]\,=\,\g_R(i+2)$.

It follows from part~(i) that $\g_R^e$ is a direct summand of $\g_R$. In view of our earlier remarks this implies that so is $(\g_R^e)^\perp$. The invariance of $\kappa$ yields 
$[e,\g_R]\subseteq (\g_R^e)^\perp$. As $[e,\g_\C]\,=\, (\g_\C^e)^\perp$  and $[e_\k,\g_\k]\,=\,(\g_\k^{e_\k})^\perp$ have the same dimension (for any algebraically closed field $\k$ whose characteristic is good for $\Phi$) we can again invoke the Smith normal form to conclude that $[e,\g_R]\,=\, (\g_R^e)^\perp$. This completes the proof.
\end{proof}
\begin{Remark}
It follows from Lemma~\ref{c_g-R-e} 
that each $\g_{R}^e(i)=\g_R^e\cap\g_R(i)$ is a direct summand of $\g_R(i)$. In particular, the Lie ring $\g_{R}^e(0)$ is a direct summand of $\g_R(0)$ and an $R$-form of the reductive Lie algebra
$\g_\Q^e(0)\,=\,\g_\Q^e\cap \g_\Q(0)$.
By Proposition~\ref{e-Delta}(iv), the latter Lie algebra is $\Q$-split.
Furthermore, $\t_{e,R}=\g_{R,\,0}^e$ is an $R$-form of the maximal $\Q$-split toral subalgebra $\t_{e,\Q}$ of $\g_Q^e(0)$ and for every $\eta\in \Phi_e$ such that $\g_{R,\,\eta}^e(0)\ne \{0\}$
the direct summand $\g_{R,\,\eta}^e(0)$ of  $\g_R^e(0)$ is a free $R$-module of rank $1$. Combining the above with Lemma~\ref{c_g-R-e}(iii) one observes that
the $R$-form $\g_R^e(0)$ of the reductive Lie algebra $\g_\Q^e(0)$ is split over $R$ and $\g_R(0)\,=\,\g_R^e(0)\oplus [e,\g_R(-2)]$.

\end{Remark}	
\section{Generalised Gelfand--Graev modules over rings and their endomorphisms}
\label{ss:W-R}
\subsection{Forms of generalised Gelfand--Graev modules}
\label{ss:RformsQ}  Recall the ring $R$ from Subsection~\ref{ss:standardhypotheses}. Our deliberation in the previous Subsection will enable
us to construct $R$-forms, $Q_R$, of the generalised Gelfand--Graev modules $Q$ defined in Subsection~\ref{ss:finiteWalgebras}. 
We fix $(I,J)\in\P(\Pi)$, choose $e:=e_{I,J}$ as in Subsection~\ref{ss:char0nilp}
and denote by $e_\k$ the image of $e$ in $\g_\k=\g_R\otimes_R\k$.
To ease notation we write $L_A$ for $L_{J,A}$ and set $\l_A:=\Lie(L_{J,A})$. Also let $\kappa$, $e$, $\lambda_e$, $t_i$ and $\Phi_e$  be as in Subsection~\ref{ss:geR}. 
We write $\Phi_e^+$ (resp. $\Phi_e^-$) for 
the set of all $\eta\in \Phi_e$ such that $\eta(t_i)>0$ (resp. $\eta(t_i)<0$)
for some $i\not\in I$. Our remarks in Subsection~\ref{ss:geR} imply that
$\g_R$  decomposes as $$\g_R\,=\,\n_{-,R}\oplus \l_R\oplus \n_{+,R}$$ where $\n_{\pm,R}$ is the $R$ span of all
$\g_{R,\,\eta}$ with $\eta\in\Phi_e^\pm$. This is a triangular decomposition of the Lie ring $\g_R$ associated with its Levi subring $\l_R$ and the normalised Killing form $\kappa$ vanishes on $\n_{\pm,R}$. The optimal cocharacter $\lambda_e=\lambda_{I,J}$ lies in $X_*(L_R)$ and hence induces $\Z$-gradings on all components of this decomposition. Since
$e$ is distinguished in $\l_R$ we have that $\l_R(i)=\{0\}$ for all odd $i$, forcing $$\g_R(-1)\,=\,\n_{-,R}(-1)\oplus \n_{+,R}(-1).$$ 	

As before, we set $\chi:=\kappa(e,-)$ and regard $\chi$ as an element of the dual $R$-module $\g_R^*$. 
Keeping the notation of Subsection~\ref{ss:PBWtheorem} we define
$\Psi\colon\,\g_R(-1)\times \g_R(-1)\to R$ by setting $\Psi(x,y)=\chi([x,y])$
for all $x,y\in \g_R(-1)$. This is a skew-symmetric $R$-bilinear form on $\g_R(-1)$ with values in $R$. Since $e\in\l_R$ and $\n_{\pm,R}$ are orthogonal to $\l_R$ with respect to $\kappa$ the direct summands $\n_{\pm,R}(-1)$ of $\g_R(-1)$ are totally isotropic with respect to $\Psi$.
Let $B_-=\{z_1',\ldots,z_s'\}$ and $B_+=\{z_1,\ldots,z_s\}$ be $R$-bases of $\n_{-,R}(-1)$ and  $\n_{+,R}(-1)$, respectively. The Gram matrix of $\Psi$ with respect to the ordered $R$-basis $B_-\cup B_+$ of $\g_R(-1)$ has form
$$
\begin{bmatrix} 
O_s& M \\
-M^\top & O_s\\
\end{bmatrix}
$$ for some square matrix $M=(M_{ij})_{1\le i,j\le s}$ with entries in $R$, where $O_s$ denotes the $s\times s$ zero matrix.

Let $p$ be an arbitrary good prime for $\Phi$ and $\k$ an algebraically closed field of characteristic $p>0$. Let $M_\k\in\Mat_s(\k)$ be the matrix such that for all $1\le i,j \le s$ the $(i,j)$-th entry equals the image of $M_{ij}$ in $R/pR\subset\k$. 
Since the cocharacter $\lambda_e$ is  optimal for $e_\k$ for any field $\k$ as above we have that $\g^e_{\k}\cap\g_\k(-2)\,=\,\{0\}$. This implies
that the skew-symmetric bilinear form $\Psi_\k$ on $\g_\k(-1)$ obtained from $\Psi$ by reduction modulo $p$ is non-degenerate. As a consequence, the matrix $M_\k$ is invertible in arbitrary good characteristic. This forces 
$\det(M)\in R^\times$ showing that 
$M^{-1}\in\Mat_s(R)$. Since
$$\begin{bmatrix} 
M^{-1}& O_s \\
O_s & I_s\\
\end{bmatrix}
\begin{bmatrix} 
O_s& M \\
-M^\top& O_s\end{bmatrix}	
	\begin{bmatrix} 
	(M^{-1})^\top& O_s \\
	O_s& I_s\end{bmatrix}
	\,=\,\begin{bmatrix} 
	O_s& I_s \\
	-I_s & O_s\\
	\end{bmatrix},$$
we may adjust $B_-\subset \n_{-,R}$ in such a way that
that $\Psi(z_i',z_j)=\delta_{i,j}$. As this adjustment does not affect
our choice of $B_+$ we may also assume that $z_i=e_{\gamma_i}$ for some positive roots $\gamma_i\in \Phi$, where 
$1\le i \le s$.

We now set $\g_R(-1)_0\,:=\,\n_{-,R}(-1)$ and let $\m_R$ denote the Lie subring $\g_R(-1)_0\oplus\bigoplus_{i<1}\g_R(i)$ of $\n_{-,R}$. We write 
$\m_{R,\chi}$ for the free $R$-submodule of $U(\g_R)$ generated
by all $x-\chi(x)$ with $x\in \m_R$ and define 
$$Q_R\,:=\,U(\g_R)/U(\g_R)\m_{R,\chi}.$$
Since the PBW theorem holds for $U(\g_R)$ it is straightforward to see that
$U(\g_R)\m_{R,\chi}$ is a direct summand of $U(\g_R)$ and
$Q$ is a free $R$-module. In view of Lemma~\ref{c_g-R-e} there exists a basis $\{x_1,\ldots, x_m\}$ of the free $R$-module $\bigoplus_{i\ge 0}\g_R(i)$ such that $x_1,\ldots, x_r$ is a free basis of $\g_R^e$ and $x_i\in \g_{R,\,\eta_i}(n_i)$ for some $n_i\in\Z_+$ and $\eta_i\in\Phi_e\cup\{0\}$.
Therefore, we may (and will) assume that $Q_R$ is freely generated by that PBW monomials
$x^\i z^\j = x_1^{i_1}\cdots x_m^{i_m} z_1^{j_1}\cdots z_s^{j_s}$ 
with $\i\in\Z_+^m$ and $\j\in\Z_+^s$.

 Given an algebraically closed field $\k$ whose characteristic is good for $\Phi$ we put 
$Q_\k\,:=\,Q_R\otimes_R \k$. Note that $Q_\k$ is nothing but the module $Q$ introduced in Subsection~\ref{ss:finiteWalgebras}. 
All of the above is consistent with the conventions of \cite[\textsection 3]{Sasha1} which we are going to use in what follows. 

We define
the Kazhdan filtration of $Q_R$ as in Subsection~\ref{ss:PBWtheorem}.
Since the Lie ring $\m_R$ and the left ideal $U(\g_R)\m_{R,\chi}$ are stable under the adjoint action of $\t_R$, we have natural actions of $\t_R$ on the $R$-module $Q_R$ and the $R$-algebra $Q_R^{\ad\, \m_R}$. Our choice of $x_i$ with $1\le i\le m$ and $z_j$ with $1\le j\le s$ ensures that all PBW monomials $x^\i z^\j\in Q_R$ are weight vectors for $\t_R$. In particular, all components $\F_d(Q_R)$ of the Kazhdan filtration of $Q_R$ are $\t_R$-stable.

\subsection{Arithmetic properties of generators of finite $W$-algebras}
Let $k\in\{1,\ldots, r\}$ and recall that $x_k\in\g_{R,\,\eta_k}^e$ gives rise to a PBW generator $\Theta(x_k)$ of $U(\g_\C,e)$ given by the formula (\ref{e:PBWKazhdanterm}).
By \cite[4.6]{Sasha1}, all coefficients $\lambda_{\i,\j}^k$ of $\Theta(x_k)$ are rational numbers. Since $R$ is a principal ideal domain there is a positive integer $D_k$ such that
all $D_k\lambda_{\i,\j}^k$ are in $R$ and the ideal of $R$ generated by these elements (with $k$ fixed) equals $R$. 
Put
$\widehat{\Theta}(x_i):=D_k\Theta(x_k)$. By our choice of $x_i$'s and $z_j$'s in Subsection~\ref{ss:RformsQ}, we have that $\widehat{\Theta}(x_k)\in Q_R^{\ad\,\m_R}$.

Recall that $\Phi$ is an irreducible root system of type other than $\rm A$.
\begin{Proposition}\label{ThetaR}
Let $\ell\in \N$ be the smallest good prime for $\Phi$ and suppose that
$x_k\in \g_{R}^e(n_k)$ where $n_k\le \ell-2$. Then $D_k=1$ and $\Theta(x_k)\in Q_R^{\ad\,\m_R}$.
\end{Proposition}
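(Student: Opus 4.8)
The plan is to revisit the inductive construction of the generators $\Theta(x_k)$ given in \cite[\textsection 4]{Sasha1} (and its positive-characteristic counterpart in \cite[\textsection 7]{GT18}), to run it over the ring $R$ in place of $\C$, and to keep careful track of the denominators that appear. Starting from $x_k\in\g_R^e(n_k)$, which has Kazhdan degree $n_k+2$ in $Q_R$, one builds $\Theta(x_k)$ by a finite downward induction on Kazhdan degree: given an element $u$ with the correct symbol whose $\ad\,\m_R$-defect $\{(y-\chi(y))\cdot u\mid y\in\m_R\}$ lies in $\F_dQ_R$ for some $d<n_k+2$, one adds to $u$ a correction in $\F_dQ_R$ that pushes the defect into $\F_{d-1}Q_R$. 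Since each $z_i,z_i'$ has Kazhdan degree $1$ and each $x_i$ has Kazhdan degree $n_i+2\ge 2$, the monomials $x^\i z^\j$ that can occur in these corrections have Kazhdan degree between $2$ and $n_k+1$, so the induction terminates and produces $\Theta(x_k)$ with $\lambda^k_{\i,\j}$ supported on $2\le|(\i,\j)|_e\le n_k+2$, exactly as in \eqref{e:PBWKazhdanterm}. The whole construction makes sense over $R$, because $\kappa$ is $R$-valued on $\g_R$ (so $\chi=\kappa(e,-)$ maps $\g_R$ into $R$) and the structure constants of a Chevalley basis are integers.

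The heart of the argument is that all the corrections can be chosen inside $Q_R$, i.e.\ with no denominator divisible by a good prime. After passing to the Kazhdan-associated-graded, the inductive step that kills the degree-$d$ part of the defect (with $2\le d\le n_k+1$) reduces to a linear equation essentially governed by $\ad e$; and by Lemma~\ref{c_g-R-e}(ii), together with the $R$-module isomorphism $\ad e\colon\g_R(-1)\to\g_R(1)$ coming from the non-degeneracy of $\Psi$ over $R$ established in Subsection~\ref{ss:RformsQ} and the fact (see the Remark after Lemma~\ref{c_g-R-e}) that each $\g_R^e(i)$ is a direct summand of $\g_R(i)$, the map $\ad e$ restricts, for every $i\ge-1$, to an $R$-module isomorphism from a fixed complement of $\g_R^e(i)$ in $\g_R(i)$ onto $\g_R(i+2)$; inverting it introduces no denominators. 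The remaining denominators are integers one reads off the recursion of \cite[\textsection 4]{Sasha1}, and they are products of integers each at most $n_k+1$ (roughly, a factor $\tfrac1d$ is picked up when re-solving the defect equation at Kazhdan degree $d\le n_k+1$); crucially the integer $n_k+2$ never occurs, because the symbol of $\Theta(x_k)$ is prescribed to be $x_k$, so no linear system has to be solved in Kazhdan degree $n_k+2$. Since $n_k+1\le\ell-1$ and $\ell$ is the smallest good prime for $\Phi$, every prime $\le n_k+1$ is bad, hence invertible in $R=\Z[\tfrac1p\mid p\text{ bad for }\Phi]$; so all these integers are units in $R$, and consequently every $\lambda^k_{\i,\j}$ lies in $R$.

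It then follows that the smallest positive integer $D_k$ with $D_k\lambda^k_{\i,\j}\in R$ for all $\i,\j$ and with $(D_k\lambda^k_{\i,\j})_{\i,\j}$ generating the unit ideal of $R$ is $D_k=1$, so that $\widehat\Theta(x_k)=D_k\Theta(x_k)=\Theta(x_k)$; and since $\widehat\Theta(x_k)\in Q_R^{\ad\,\m_R}$ has already been observed, this gives $\Theta(x_k)\in Q_R^{\ad\,\m_R}$. I expect the real work to lie in the middle paragraph: one must extract from the recursion of \cite[\textsection 4]{Sasha1} the precise list of integers that can occur as denominators, establish the clean bound $n_k+1$ for them, and in particular rule out a factor equal to a good prime (such as $n_k+2=\ell$ when $n_k=\ell-2$); the rest is bookkeeping.
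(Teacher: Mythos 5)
Your overall strategy---rerunning the recursive construction of the $\Theta(x_k)$ over $R$ and bounding the denominators---is not the route the paper takes, and it contains a gap that is fatal exactly at the boundary case the proposition is designed to cover. Your key claim is that the only denominators arising in the recursion are products of integers $\le n_k+1$, and ``crucially the integer $n_k+2$ never occurs.'' This is contradicted by the explicit formula \eqref{e:onegenerators}: for $x\in\g^e(1)$ (so $n_k=1$) the generator $\Theta(x)$ carries the coefficient $\tfrac13=\tfrac{1}{n_k+2}$, and for $\Phi$ of type $\rm B$, $\rm C$ or $\rm D$ one has $\ell=3$ and $\tfrac13\notin R=\Z[\tfrac12]$. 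The Remark following Lemma~\ref{L:JEMS} flags precisely this point: integrality of $\Theta(x)$ for $x\in\g_R^e(\ell-2)$ is \emph{not} visible from denominator bookkeeping in the recursion; it holds only because of cancellations tied to the $\t_{e,R}$-weight of $x$ and the structure of $\g_R^e(1)$. Your heuristic for where the factors $\tfrac1d$ come from (``re-solving the defect equation at Kazhdan degree $d$'') is also unsubstantiated; the inverse of $\ad e$ on a complement of $\g_R^e$ is indeed unimodular over $R$ by Lemma~\ref{c_g-R-e}, but that is not the only source of denominators in Premet's construction, as \eqref{e:onegenerators} already shows.

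The paper argues quite differently, by contradiction and reduction modulo $p$. If some good prime $p$ divides $D_k$, one reduces $\widehat\Theta(x_k)=D_k\Theta(x_k)$ along the natural surjection $\mu\colon Q_R\otimes_R\k\to Q_{\k,\bar\chi}$ onto the reduced Gelfand--Graev module. Since $D_k\equiv 0\pmod p$, the image $\widehat\theta_k$ has no PBW terms supported purely on $x_1,\ldots,x_r$ with $\j={\bf 0}$, so \cite[Lemma~3.2]{Sasha1} forces $\widehat\theta_k=0$; on the other hand $\widehat\Theta(x_k)\otimes_R1\ne0$ by the choice of $D_k$. The contradiction is immediate when $n_k+2<p$ because $\ker\mu$ meets $\F_{n_k+2}$ trivially; in the remaining case $n_k=\ell-2$, $p=\ell$, the element $\widehat\Theta(x_k)\otimes_R1$ would have to be a combination of the $z_i^p$, and a $\t_{e,\k}$-weight argument combined with the fact that $\l_R(i)=0$ for odd $i$ (as $e$ is distinguished in $\l$ and $\ell-2$ is odd outside type $\rm A$) rules this out. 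If you want to salvage your approach you would need to prove genuine integrality of the coefficients $\lambda^k_{\i,\j}$ at the prime $\ell$ itself, which is exactly the nontrivial content; the paper's mod-$p$ argument is how that content is actually supplied.
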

\begin{proof} 
As $D_k$ is a positive integer it suffices to show that $D_k\in R^\times$. So suppose the contrary and let $p\in \N$ be a prime of $R$ such that $p\mid D_k$. Then $p$ is a good prime for $\Phi$. 
Let $\k$ be an algebraically closed field of characteristic $p$ and write $\bar{\chi}$ for the linear function on $\g_\k$ induced by $\chi\in\g_R^*$. Let $I_{\bar{\chi}}$ be the two-sided ideal of $U(\g_\k)$ generated by all 
$x^p-x^{[p]}-\bar{\chi}^p(x)$ with $x\in\g_\k$ and define
$$Q_{\k,\,\bar{\chi}}\,:=\,U(\g_{\k})/\big(U(\g_\k)\m_{\k,\bar{\chi}}+
I_{\bar{\chi}})$$ where $\m_{\k,\bar{\chi}}$ is the subspace of $U(\g_\k)$ spanned by all $x-\bar{\chi}(x)$ with $x\in \g_\k$. 
As explained in \cite[\textsection 2]{Sasha1}, the $\g_\k$-module 
$Q_{\k,\,\bar{\chi}}$ is a projective generator for the reduced enveloping algebra $U_{\bar{\chi}}(\g_\k)$. Furthermore, the modular finite $W$-algebra $U_{\bar{\chi}}(\g_\k,e_\k)$ from Subsection~\ref{ss:pcentreandSkryabin} identifies with the endomorphism algebra 
$\End_{\g_\k}(Q_{\k,\,\bar{\chi}})^{\rm op}$ (see also \cite[Proposition~8.7]{GT18}).

Let $\{x_i\,|\,\,1\le i\le m\}\subset \bigoplus_{i\ge 0}\g_R(i)$ and $\{z_j\,|\,\,1\le j \le  s\}\subset \g_R(-1)$ be as in Subsection~\ref{ss:RformsQ}. Given $x\in \g_R$ we write $\bar{x}$ for the image of $x$ in 
$\g_\k=\g_R\otimes_R\k$. 
For $\i\in\Z_+^m$ and $\j\in\Z_+^s$ put $\bar{x}^\i\bar{z}^\j\,:=\, \bar{x}_1^{i_1}\cdots\bar{x}_m^{i_m}\bar{z}_1^{j_1}\cdots\bar{z}_s^{j_s}$ and regard each $\bar x^\i \bar z^\j$ as an element of $Q_{\k,\,\bar{\chi}}$.
It follows from \cite[\textsection 3]{Sasha1} that the set 
\begin{equation}
\label{bars}
\{\bar{x}^\i\bar{z}^\j
\,|\,\, 0\le i_k,j_k\le p-1\} 
\end{equation}
is a $\k$-basis of $Q_{\k,\,\bar{\chi}}$. It is immediate from our construction of $Q_R$ that there exists a natural $\g_\k$-module homomorphism $\mu\colon\, Q_R \otimes_R \k\to  Q_{\k,\,\bar{\chi}}$
such that $\mu(x^\i z^\j\otimes_R 1)=\bar{x}^\i\bar{z}^\j$ for all $\i\in \Z_+^m$ and $\j\in \Z_+^s$ with $0\le i_k,j_k\le p-1$ . Evidently $\mu$ is surjective and its kernel satisfies:
\begin{enumerate}
\item[(i)\,] $\F_d(Q_R\otimes_R \k)\cap \ker \mu\, = \,\{0\}$ for $d < p$;

\smallskip

\item[(ii)\,] $\F_{p}(Q_R\otimes_R \k)\cap \ker \mu\,  = \,\sum_{i=1}^s \k z_i^p$.
\end{enumerate}

Note that $\widehat{\theta}_k\,:=\,\mu(\widehat{\Theta}(x_k))$ is a well-defined element of $U_{\bar{\chi}}(\g_\k,e_\k)$.
Since $D_k\in pR$ it follows from (\ref{e:PBWKazhdanterm}) that
the expression of $\widehat{\theta}_k$ in terms of the PBW basis
(\ref{bars}) has no terms of the form $a_{\i,\j}\bar{x}^\i\bar{z}^\j$ with $i_{r+1}=\cdots=i_m=0$, $\j=\bf{0}$, and $a_{\i, \j}\in\k$. Applying \cite[Lemma~3.2]{Sasha1} we now deduce that $\widehat{\theta}_k=0$. 

If $n_k<\ell-2$ or $n_k=\ell-2$ and $p>\ell$ then (i) implies that $\widehat\Theta(x_k)\otimes_R 1 = 0$ in $Q_R \otimes_R \k$. However, our choice of $D_k$ ensures that $\widehat{\Theta}(x_k)\otimes_R 1$ is a nonzero element of $Q_R\otimes_R\k$, and the contradiction proves the Proposition in this case.

It remains to consider the case where $n_k=\ell-2$ and $p=\ell$. Then $|\i|+|\j|\le |(\i,\j)|_e=p$ holds for every PBW monomial $x^\i z^\j$ involved in $\widehat{\Theta}(x_k)$. Since $\widehat{\theta}_k=0$, property (ii) of $\ker \mu$ entails that  $\widehat{\Theta}(x_k)\otimes_R 1$ is a linear combination of elements $z_i^p\in Q_\k$ where $1\le i\le s$. Recall that $x_k\in\g_{R,\,\eta_k}^e$ for some $\eta_k\in\Phi_e\cup\{0\}$ and that $z_1,...,z_s$ are root vectors for $\t_\k$. As a consequence,
$\widehat{\Theta}(x_k)\otimes_R 1$ has weight $0$ with respect to the adjoint action of $\t_{e,\k}$ on $Q_\k$. If $\eta_k=\beta_k\vert_{\t_{R,e}}$ for some $\beta_k\in \Phi$ then  $\beta_k(t_j)\in R^\times$ for some $j\not\in I$ by our discussion in Subsection~\ref{ss:geR}. But in this case $t_j\otimes_R 1$ cannot annihilate  $\widehat{\Theta}(x_k)\otimes_R 1$. So it must be that $\eta_k=0$.
But then $x_k\in \l_R(n_k)$ where $\l_R$ the centraliser in $\g_R$ of $\t_{e,R}$. Since $e$ is distinguished in $\l_\C$ we have that
$\l_R(i)=\{0\}$ for all odd $i$. Since $n_k=\ell-2$ is odd (as $\Phi$ is not of type $\rm A$) we reach a contradiction thereby proving the current Proposition.
\end{proof}
\begin{Remark}
If $\g=\gl_N$ then every
nilpotent element $e\in\g$ is Richardson and admits an {\it even} good grading
(it has the property that $\g(i)=\{0\}$ for all odd $i$).
In \cite{BK}, Brundan and Kleshchev used such gradings to construct integral forms of all finite $W$-algebras $U(\gl_N,e)$ (see also \cite[\textsection 3.4]{GT19b}).
It is worth remarking that outside type $\rm A$ the nilpotent elements contained in a single sheet of $\g$ and induced from nonzero rigid nilpotent elements of Levi subalgebras of $\g$ do not admit even good gradings. This follows from the classification of sheets in the Lie algebras
of standard reductive groups; see \cite[Theorem~2.8]{PS18}.
\end{Remark}
Let $x\in \g_{\C}^e(n)$. When $n\ge 2$, explicit expressions for $\Theta(x)$ are too bulky to write down, but there exist reasonably short
formulae in the case where $x\in \g_\C^e(0)\cup \g_\C^e(1)$.  They  appeared in \cite{Pr07a}, in a slightly different setting. For $n = 0$ we let
\begin{eqnarray}
\label{e:zerogenerators}
\Theta(x)\, :=\, x - \frac{1}{2}\sum_{i=1}^{s}\, z_i[x, z_i'] \in Q_\C
\end{eqnarray}
and for $n = 1$ we let
\begin{eqnarray}
\label{e:onegenerators}
\Theta(x) &:=& x - \sum_{i=1}^s [x, z_i'] z_j + \frac{1}{3} \sum_{i,j = 1}^s [[x, z_i'], z_j'] z_j z_i\\ 
\nonumber& & - \frac{1}{3} \sum_{i, j=1}^s \big(\kappa(e,[z_j', [x, [z_j, z_i']]]) - \kappa(e,[z_j, [x, [z_j', z_i']]]\big)z_i \in Q_\C
\end{eqnarray}
\begin{Lemma} \label{L:JEMS} The elements in (\ref{e:zerogenerators}) and (\ref{e:onegenerators}) lie in $U(\g_\C,e)$. If $u\in \g^e(0)$ and  $v\in \g_\C^e(0)\cup \g_\C^e(1)$ then 
$[\Theta(u),\Theta(v)]\,:=\,\Theta(u)\Theta(v)-\Theta(v)\Theta(u)\,=\,\Theta([u,v])$.
\end{Lemma}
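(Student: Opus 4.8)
The plan is to work over $\C$ throughout (the denominators $2$ and $3$ in (\ref{e:zerogenerators}) and (\ref{e:onegenerators}) require this) and to use the description $U(\g_\C,e)=Q^{\ad\,\m}$ from Subsection~\ref{ss:finiteWalgebras}. Since $U(\g_\C)\m_\chi$ is a left ideal containing $\m_\chi$, for $u\in U(\g_\C)$ and $y\in\m$ we have $u(y-\chi(y))\in U(\g_\C)\m_\chi$, so that $(y-\chi(y))u\equiv[y,u]\pmod{U(\g_\C)\m_\chi}$; consequently $\bar u\in Q$ lies in $U(\g_\C,e)$ if and only if $[y,u]\in U(\g_\C)\m_\chi$ for every $y\in\m$. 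As $\m=\g(-1)_0\oplus\bigoplus_{i\le-2}\g(i)$ is spanned by $\g(-1)_0$ together with the $\g(i)$ with $i\le-2$ and this condition is linear in $y$, it is enough to verify it with $y$ ranging over those subspaces.

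To prove that (\ref{e:zerogenerators}) lies in $U(\g_\C,e)$ I would substitute its right-hand side into $[y,-]$ and expand, using that $x$ centralises $e$ and satisfies $[x,\g(i)]\subseteq\g(i)$, the identity $\chi([z_i',z_j])=\Psi(z_i',z_j)=\delta_{i,j}$, the isotropy of $\g(-1)_0$ and $\g(-1)_1$ with respect to $\Psi$, and the vanishing of $\chi$ on $\g(i)$ for $i\ne-2$. For $y\in\g(i)$ with $i\le-2$ every resulting term lies in $U(\g_\C)\m_\chi$ by inspection, while for $y\in\g(-1)_0$ the coefficient $-\tfrac12$ is precisely what makes the two copies of $[y,x]$ produced through $\Psi$ cancel. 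This is the computation carried out in \cite{Pr07a} for the Dynkin grading of a minimal nilpotent element; it invokes only the good-grading axioms and the properties of $\Psi$ and $\chi$ just listed, hence applies verbatim to an arbitrary nilpotent element and an arbitrary good grading. The element (\ref{e:onegenerators}) is handled in the same way but with a longer computation, the last, scalar-valued, correction term being exactly what is needed for annihilation by $\ad\,\g(-2)$. Finally, each of these elements has Kazhdan-leading term $x$ and every correction term in (\ref{e:zerogenerators}) and (\ref{e:onegenerators}) contains a factor $z_i$, so by the uniqueness part of Lemma~\ref{L:PBWtheorem} they coincide with the generators $\Theta(x)$ constructed there.

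For the bracket relation put $D:=[\Theta(u),\Theta(v)]-\Theta([u,v])\in U(\g_\C,e)$ and let $n\in\{0,1\}$ be defined by $v\in\g^e(n)$. Since $\Theta(u)\in\F_2U(\g_\C,e)$, $\Theta(v)\in\F_{n+2}U(\g_\C,e)$ and $[u,v]\in\g^e(n)$, both summands of $D$ lie in $\F_{n+2}U(\g_\C,e)$; under $\gr U(\g_\C,e)\cong S(\g^e)$ with its Kazhdan grading and transverse Poisson bracket $\{\,,\,\}$ (see \cite{GG02}, \cite{Sasha1} and Lemma~\ref{L:PBWtheorem}), the degree-$(n+2)$ symbol of $D$ is $\{u,v\}-[u,v]$. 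Because the transverse Poisson bracket restricts to the Lie bracket on the linear part of $S(\g^e)$ in Kazhdan degrees $\le3$ (there is no room for quadratic symbols below Kazhdan degree $4$), one gets $\{u,v\}=[u,v]$, hence $D\in\F_{n+1}U(\g_\C,e)$, and the PBW basis (\ref{e:PBWbasis}) forces $D\in\C\cdot1$ when $n=0$ and $D\in\C\cdot1\oplus\Theta(\g^e(0))$ when $n=1$. To finish I would note that the Kazhdan weight modulo $2$ is a genuine $\Z/2$-algebra-grading of $U(\g_\C)$, $Q$ and $U(\g_\C,e)$, coming from the $\Z/2$-grading $\g(i)\mapsto i\bmod2$ of $\g$: the element $\Theta(u)$ with $u\in\g^e(0)$ is homogeneous of Kazhdan weight $2$, hence even, whereas every monomial occurring in (\ref{e:onegenerators}) has odd Kazhdan weight, so $\Theta(v)$ and $\Theta([u,v])$ are odd and therefore $D$ is odd when $n=1$; as $\C\cdot1\oplus\Theta(\g^e(0))$ is even, this gives $D=0$. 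When $n=0$ the parity argument is vacuous and $D=c(u,v)\cdot1$ for an antisymmetric bilinear form $c$ on the reductive Lie algebra $\g^e(0)$ which the Jacobi identity shows to be a $2$-cocycle; one then concludes by the direct evaluation of the antisymmetrised constant term of $\Theta(u)\Theta(v)$ carried out in \cite{Pr07a}.

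The main obstacle I anticipate is the bookkeeping in the degree-one computations: the $\ad\,\m$-invariance of (\ref{e:onegenerators}) and the elimination of the potential remainder $\Theta(w)$ in $D$ both require careful manipulation of the double sums over $z_i,z_i'$, the antisymmetrisations, and repeated use of the Jacobi identity together with $\Psi(z_i',z_j)=\delta_{i,j}$ and the isotropy of $\g(-1)_0$ and $\g(-1)_1$. Conceptually nothing new is needed beyond \cite{Pr07a}; the only point demanding attention is to confirm that none of the manipulations used there is special to the minimal orbit or to the Dynkin grading.
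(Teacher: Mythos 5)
Your argument is sound in outline but takes a genuinely different route from the paper, and it has one soft spot worth naming. The paper does not verify $\ad\,\m$-invariance of \eqref{e:zerogenerators} and \eqref{e:onegenerators} inside $Q$ at all: it introduces $\n_1=\bigoplus_{i<-1}\g_\C(i)$, $\n_0=\bigoplus_{i<0}\g_\C(i)$ and $\widetilde{Q}_\C=U(\g_\C)/U(\g_\C)\n_{1,\chi}$, invokes the Gan--Ginzburg isomorphism $\widetilde{Q}_\C^{\ad\,\n_0}\stackrel{\sim}{\to}U(\g_\C,e)$ from \cite[\textsection 5.5]{GG02}, and observes that the explicit $\ad\,\n_0$-invariants written down in \cite[2.5]{Pr07a} are sent to \eqref{e:zerogenerators} and \eqref{e:onegenerators} by the projection $\pi\colon\widetilde{Q}_\C\twoheadrightarrow Q_\C$; the bracket identity is then exactly \cite[Lemmas~2.4, 2.5]{Pr07a} transported through $\pi$. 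Your plan to check $[y,\Theta(x)]\in U(\g_\C)\m_\chi$ directly for $y\in\m$ is legitimate, but the computation in \cite{Pr07a} is \emph{not} the same computation and does not apply verbatim: there the elements live in $\widetilde{Q}_\C$, the sums run over a symplectic basis of all of $\g(-1)$ rather than over the Lagrangian dual bases $z_i,z_i'$, invariance is tested against all of $\n_0\supseteq\g(-1)$, and reductions are performed modulo the smaller ideal $U(\g_\C)\n_{1,\chi}$ (in particular $\g(-1)_0$ does not reduce to zero there, whereas it does in $Q$). Since you never actually carry out the reductions modulo $U(\g_\C)\m_\chi$, the assertion that ``the coefficient $-\tfrac12$ is precisely what makes the two copies of $[y,x]$ cancel'' is unverified; if you insist on working in $Q$ you must redo the cancellation from scratch (and keep careful track of signs coming from $\Psi(z_i',z_j)=\delta_{i,j}$ versus $\Psi(z_j,z_i')$), whereas the paper's detour through $\widetilde{Q}_\C^{\ad\,\n_0}$ avoids this entirely. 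On the other hand, your treatment of the bracket identity is a nice alternative where it is self-contained: the estimate $[\Theta(u),\Theta(v)]-\Theta([u,v])\in\F_{n+1}U(\g_\C,e)$ is indeed part of the PBW theorem (\cite[Theorem~4.6(iv)]{Sasha1}), the $\Z/2$-grading by Kazhdan weight modulo $2$ is a genuine algebra grading of $Q$ and of $U(\g_\C,e)$ and does kill the discrepancy when $v\in\g_\C^e(1)$, and you correctly recognise that for $u,v\in\g_\C^e(0)$ the residual scalar $2$-cocycle cannot be dispatched by Whitehead's lemma on the centre of the reductive algebra $\g_\C^e(0)$, so that the explicit constant-term evaluation of \cite{Pr07a} is still needed there. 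In short: the parity argument is a genuine (and arguably cleaner) replacement for \cite[Lemma~2.5]{Pr07a}, but the membership claim needs either the honest computation in $Q$ or the comparison isomorphism of \cite{GG02}; as written, your proposal supplies neither.
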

\begin{proof}
Set $\n_1\, := \,\bigoplus_{i<-1} \g_\C(i)$ and  $\n_0\, :=\, \bigoplus_{i<0} \g_\C(i)$ and denote by $\n_{1,\chi}$ the subspace of $U(\g_\C)$ spanned by all $x - \chi(x)$ with $x\in \n_1$. 
Let $\widetilde{Q}_\C\, := \,U(\g_\C) / U(\g_\C)\n_{1,\chi}$.
In \cite[\textsection 5.5]{GG02} it was demonstrated that the canonical homomorphism $\pi\colon\,\widetilde{Q}_\C \twoheadrightarrow Q_\C$ induces an algebra isomorphism $\widetilde Q_\C^{\ad\,\n_0} \stackrel{\sim}{\rightarrow} U(\g_\C, e)$. In \cite[2.5]{Pr07a} certain formulae for elements of $\widetilde Q_\C^{\ad\,\n_0}$ were presented, associated to the elements 
$u,v\in\g_\C(0)\cup \g_\C^e(1)$. It is straightforward to check that those elements are sent to \eqref{e:zerogenerators} and \eqref{e:onegenerators} respectively by the map $\pi$. The final claim follows from \cite[Lemmas~2.4 \& 2.5]{Pr07a}.
\end{proof}
\begin{Remark} If $x\in \g_R^e(0)$ then $\Theta(x)\in Q_R$ because $\frac{1}{2}\in R$ (recall that $\Phi$ is not of type $\rm A$). The same holds for $x\in \g_R^e(1)$ when $\frac{1}{3}\in R$. Note that $\frac{1}{3}\not\in R$ if and only if $\Phi$ is of type $\rm B$, $\rm C$ or $\rm D$
and in these cases it is not immediately clear from (\ref{e:onegenerators}) that $\Theta(x)\in Q_R$. However, a closer look at the formula reveals that 
this does hold for all weight vectors $x\in \g_{R,\,\eta}^e(1)$ such that $\eta=\beta\vert_{\t_R}$ and $\beta\in\Phi_-(\Pi)$. One can use an explicit combinatorial description of $\g_R^e$ to show that  $\g_R^e(1)$ is generated by such weight vectors as a module over $\g_R^e(0)$. Applying Lemma~\ref{L:JEMS} to the commutator action
of $\Theta(\g_R^e(0))\subset Q_R^{\ad\, \m_R}$ on 
$\Theta(\g^e_R(1))$ then yields $\Theta(\g^e_R(1))\subset Q_R$.
We omit the details because this inclusion follows from 
Proposition~\ref{ThetaR}. 
\end{Remark}

\subsection{Structural features of rigid centralisers $\g_R^e$} \label{ss:rigidR} 
Unless otherwise stated, we assume from now on that  $e=e_{I,J}$ is a nonzero rigid nilpotent element of $\g_\C$ and we retain the conventions introduced earlier. Our next goal is to show that the $R$-algebra 
$Q_R^{\ad\, \m_R}$ admits a nice PBW basis. This will imply that the unital $R$-algebra $U(\g_R,e)\,:=\,Q_R^{\ad\, \m_R}$ satisfies all requirements of 
the first part of Theorem~\ref{main1}. 
Our arguments will rely on
Proposition~\ref{ThetaR}, Lemma~\ref{L:JEMS} and some structural properties of the Lie ring $\g_R^e$.

If $\t_{e,\C}$ is a maximal torus of $\g_\C^e$ then Bala--Carter theory tells us that the weights of the Dynkin cocharacter on $\c_{\g_\C}(\t_{e,\C})$ are all even. It follows that $\t_{e,\C}$ acts with nonzero weights on $\g_\C^e(1)$, so that $\g_\C^e(1)\,=\,[\g_\C^e(0),\g^e_\C(1)]$, a fortiori.
If $\Phi$ is of type $\rm B$, $\rm C$ or $\rm D$ then it is proved in
\cite{Ya10} that the Lie algebra $\g_\C^e(0)$ is semisimple and the nilradical
$\bigoplus_{i>0}\,\g_\C^e(i)$ of $\g_\C^e$ is generated by 
$\g_\C^e(1)$. Combining with $\g_\C^e(1)\,=\,[\g_\C^e(0),\g^e_\C(1)]$, this implies that $\g_\C^e$ is perfect.
Using computational methods de Graaf checked in \cite{dG} that this continues to holds for the majority of rigid nilpotent elements in exceptional Lie algebras $\g_\C$. More precisely, the Lie algebra
$\g_\C^e(0)$ is semisimple in all rigid cases and  
$\g_\C^e(1)\,=\,[\g_\C^e(0),\g^e_\C(1)]$ generates the Lie algebra $\bigoplus_{i\ge 0}\,\g_\C^e(i)$ unless the Bala--Carter label of $e$ is listed in Table~\ref{t:orbittable}, in which case $\g_\C^e\,=\,\C e\oplus  
[\g_\C^e,\g_\C^e]$ implying that $[\g_\C^e(1),\g_\C^e(1)]$ has codimension $1$ in $\g_\C^e(2)$. We remark that $e\in [\g_\C^e(1), \g_\C^e(1)]$ if $e \in [\g_\C^e, \g_\C^e]$ by a simple application of Weyl's complete reducibility theorem, viewing $\g_\C^e(2)$ as a $\g_\C^e(0)$-module.



\begin{table}[htb]
\bgroup
\def\arraystretch{1.5}
 \begin{tabular}{| c | | c | c | c | c | c | c |} 
 \hline
 \textnormal{Type of $\Phi$} &  ${\sf G_2}$ &  ${\sf F_4}$ & ${\sf E_7}$ & ${\sf E_8}$ & ${\sf E_8}$ & ${\sf E_8}$ \\
 \hline 
\textnormal{Bala--Carter label of $e$} & $\widetilde{\sf A_1}$ & $\widetilde{\sf A_2} + {\sf A_1}$ & $({\sf A_3} + {\sf A_1})' $ & ${\sf A_3} + {\sf A_1}$ &${\sf A_5} + {\sf A_1}$  & ${\sf D_5}({\sf a_1}) + {\sf A_2}$\\
 \hline 
\end{tabular}
\egroup\vspace{6pt}
 \caption{Rigid orbits with imperfect centralisers.}
 \label{t:orbittable}
 \end{table}
If $\k$ is an algebraically closed field and $p={\rm char}(\k)$ is a good prime for $\Phi$ then all results mentioned above continue to hold for the rigid nilpotent elements $e_\k=e\otimes_R 1\in \g_\k$. Indeed, when $\Phi$ is of type 
$\rm B$, $\rm C$ or $\rm D$ 
the equalities of $\g_\k^{e_\k}(0)\,=\,[\g_\k^{e_\k}(0),\g_\k^{e_\k}(0)]$  and 
$\g_\k^{e_\k}(1)\,=\,[\g_\k^{e_\k}(0),\g_\k^{e_\k}(1)]$ follow
from \cite[Theorem~3(i)]{PT14} and the combinatorial description of rigid nilpotent orbits in orthogonal and symplectic Lie algebras
whilst the equality 
$\bigoplus_{i\ge 1}\,\g_\k^{e_\k}(i)\,=\,\langle \g_\k^{e_\k}(1)\rangle$
is proved in Proposition~\ref{P:arprop}. We mention in passing that
$\g_\k^{e_\k}(0)=\Lie(C(e_\k))$ where $C(e_\k)\,:=\,G_\k^{e_\k}\cap C_{G_\k}(\lambda_e)$ . By \cite[Theorem~A(iv)]{Pr03}, the group $C(e_\k)$
is reductive for every nilpotent element $e_\k\in\g_\k$. Therefore, the perfectness of 
of $\g_\k^{e_\k}(0)$ implies that in all rigid cases the group $C(e_\k)$ is semisimple.

If $\Phi$ is of type ${\rm G}_2$, ${\rm F}_4$, ${\rm E}_6$, ${\rm E}_7$ or ${\rm E}_8$ then \cite[Theorem~3.8]{PS18} shows that every rigid nilpotent element of $\g_\k$ is $(\Ad\,G_\k)$-conjugate to one of the elements $e_\k$ as above. A glance at the tables in {\it loc.\,cit.} reveals that 
the codimension of $[\g_\k^{e_\k}, \g_\k^{e_\k}]$ in $\g_\k^{e_\k}$ is 
independent of $p$ as long as $p$ is good for $\Phi$ and coincides with
the codimension of $[\g_\C^{e}, \g_\C^{e}]$ in $\g_\C^{e}$. This holds for all nilpotent elements $e_\k\in\g_\k$. 
If $e_\k$ is rigid then either $\g_\k^{e_\k}\,=\,[\g_\k^{e_\k}, \g_\k^{e_\k}]$ or the Bala--Carter label of $e_\k$ is listed in Table~\ref{t:orbittable} and $[\g_\k^{e_\k}, \g_\k^{e_\k}]$ had codimension $1$ in $\g_\k^{e_\k}$. According to \cite[4.4]{PS18} in the latter case $\g_\k^{e_\k}\,=\,\k e_\k\oplus [\g_\k^{e_\k}, \g_\k^{e_\k}]$ implying that $ [\g_\k^{e_\k}(1),\g_\k^{e_\k}(1)]$ has codimension $1$ in $\g_\k^{e_\k}(2)$. To justify the very last assertion we note that $\g_\k^e(0)$ acts completely reducibly on $\g_\k^e(2)$ for the six orbits listed in Table~1 (the latter is immediate from the description of $\g_\k^e(2)$ given in \cite[pp.~73, 76, 99, 129, 149, 150]{LT2}).

Most importantly for us, it was checked in \cite[4.4]{PS18} by computer-aided calculations that for all elements $e_\k$ listed in Table~\ref{t:orbittable} the Lie algebra $\bigoplus_{i\ge 1}\,\g_\k^{e_\k}(i)$ 
is generated by its graded components $\g_\k^{e_\k}(i)$ with $i\le \ell-2$ where $\ell$ is the smallest good prime of $\Phi$. Since this holds for almost all primes  $p\in \Z$ the same is true for $\g_\C^e$. Summarising we obtain the following.
\begin{Proposition}\label{P:R-derived} Suppose $\Phi$ is not of type $\rm A$ and let $\ell$ be the smallest good prime of $\Phi$. 
\begin{enumerate}
\item[(i)\,]		
If $e=e_{I,J}$ is an arbitrary nilpotent element and $i\ge 0$  then $[\g_R^e,\g_R^e]\cap \g_R(i)$ is a direct summand of the $R$-module $\g_R^e(i)$.

\smallskip

\item[(ii)\,] Suppose $e=e_{I,J}$ is rigid. If $e$ is not listed in Table~1 then $\g_R^e=[\g_R^e,\g_R^e]$ and
the Lie ring $\textstyle\bigoplus_{i>0}\,\g_R^e(i)$ is generated by $\g_R^e(1)$.

\smallskip
	
\item[(iii)\,]	If $e=e_{I,J}$ is listed in Table~1 then $\g_R^e=Re\,\oplus\, [\g_R^e,\g_R^e]$ and the Lie ring $\textstyle\bigoplus_{i>0}\,\g_R^e(i)$ is generated by the graded components $\g_R^e(i)$ with $i\le \ell-2$.
	\end{enumerate}
\end{Proposition}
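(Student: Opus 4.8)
The plan is to deduce all three parts by reduction modulo the primes of $R$ (equivalently, the good primes of $\Phi$), using Lemma~\ref{R-mod} to convert equalities of dimensions over the residue fields into direct‑summand statements over $R$. Fix a Dynkin‑homogeneous $R$‑basis of $\g_R^e$; by Lemma~\ref{c_g-R-e} and the Remark following it, each $\g_R^e(i)$ is a free $R$‑module and a direct summand of $\g_R(i)$, so that $\dim_\k\g_\k^{e_\k}(i)=\dim_\Q\g_\Q^e(i)=\rank_R\g_R^e(i)$ for every algebraically closed $\k$ of good characteristic. For part~(i), note first that since $[\g_R^e,\g_R^e]$ is spanned over $R$ by the brackets of basis vectors and each such bracket is homogeneous, the intersection $[\g_R^e,\g_R^e]\cap\g_R(i)$ coincides with the $R$‑span $W_i$ of those basis brackets of Dynkin degree $i$. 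As a finitely generated torsion‑free module over the PID $R$, $W_i$ is free of rank $\dim_\Q\big([\g_\Q^e,\g_\Q^e]\cap\g_\Q(i)\big)$, while the canonical map $W_i\otimes_R\k\to\g_\k^{e_\k}(i)$ has image $[\g_\k^{e_\k},\g_\k^{e_\k}]\cap\g_\k(i)$, of dimension at most $\rank_R W_i$. Summing these inequalities over $i$ and using the fact recalled before the Proposition that $\dim_\k[\g_\k^{e_\k},\g_\k^{e_\k}]$ is independent of the good prime and equals $\dim_\Q[\g_\Q^e,\g_\Q^e]$, every inequality is forced to be an equality; Lemma~\ref{R-mod} applied to $V=\g_R^e(i)$ and $W=W_i$ then gives part~(i).

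For part~(ii) the element $e$ is rigid and not in Table~\ref{t:orbittable}, so $\g_\k^{e_\k}$ is perfect for every good $p$, as is $\g_\C^e$. The rank count just made shows $[\g_R^e,\g_R^e]\cap\g_R(i)$ is a direct summand of $\g_R^e(i)$ of full rank, hence equals $\g_R^e(i)$, and summing over $i$ yields $\g_R^e=[\g_R^e,\g_R^e]$. For the generation statement I would argue by induction on $i\ge 2$ that $[\g_R^e(1),\g_R^e(i-1)]=\g_R^e(i)$: letting $W_i'$ be the $R$‑span of the brackets of basis vectors of $\g_R^e(1)$ with basis vectors of $\g_R^e(i-1)$, the reduction $W_i'\otimes_R\k$ surjects onto $[\g_\k^{e_\k}(1),\g_\k^{e_\k}(i-1)]$, which equals $\g_\k^{e_\k}(i)$ since $\bigoplus_{j\ge 1}\g_\k^{e_\k}(j)$ is generated by its degree‑one part (Proposition~\ref{P:arprop} for classical types; the recalled results of \cite{dG} and \cite{PS18} for exceptional types not in Table~\ref{t:orbittable}), and the analogous statement holds over $\Q$. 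Comparing ranks and applying Lemma~\ref{R-mod} gives $W_i'=\g_R^e(i)$, and iterating proves that $\bigoplus_{i>0}\g_R^e(i)$ is generated by $\g_R^e(1)$.

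For part~(iii), part~(i) provides that $D_i:=[\g_R^e,\g_R^e]\cap\g_R(i)$ is a direct summand of $\g_R^e(i)$ of rank $\dim_\Q\big([\g_\Q^e,\g_\Q^e]\cap\g_\Q(i)\big)$; for $i\ne 2$ this is $\rank_R\g_R^e(i)$, so $D_i=\g_R^e(i)$, while for $i=2$ the summand $D_2$ has corank one in $\g_R^e(2)$. Since $e\notin[\g_\C^e,\g_\C^e]$ we get $Re\cap D_2=0$, and the image of $e$ generates the rank‑one free module $\g_R^e(2)/D_2$ because its reduction modulo any good $p$ is $e_\k$, which generates $\g_\k^{e_\k}(2)\big/\big([\g_\k^{e_\k},\g_\k^{e_\k}]\cap\g_\k(2)\big)$ in view of $\g_\k^{e_\k}=\k e_\k\oplus[\g_\k^{e_\k},\g_\k^{e_\k}]$ from \cite{PS18}; hence $\g_R^e(2)=Re\oplus D_2$ and $\g_R^e=Re\oplus[\g_R^e,\g_R^e]$. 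The generation claim follows exactly as in part~(ii): one inducts on $i$, showing $\sum_{1\le j\le\min(\ell-2,\,i-1)}[\g_R^e(j),\g_R^e(i-j)]=\g_R^e(i)$ by reducing modulo each good $p$ — where the right‑hand side becomes $\g_\k^{e_\k}(i)$ thanks to the computer‑aided verification in \cite[4.4]{PS18} that $\bigoplus_{j\ge 1}\g_\k^{e_\k}(j)$ is generated by its components of degree $\le\ell-2$ — and applying Lemma~\ref{R-mod} with the usual rank comparison.

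I expect the main obstacle to be uniformity in $p$. Lemma~\ref{R-mod} requires the relevant dimension identity over \emph{every} residue field, so one must be sure that the structural input recalled before the Proposition — perfectness of $\g_\k^{e_\k}$ or the precise codimension‑one defect, together with the degrees of a generating set for the nilradical $\bigoplus_{i\ge 1}\g_\k^{e_\k}(i)$ — holds for all good primes and not merely the small ones; for the six orbits of Table~\ref{t:orbittable} this leans on the explicit computations of \cite[4.4]{PS18}. A lesser subtlety is the graded refinement needed in part~(i): one must upgrade the statement that the total codimension of $[\g_\k^{e_\k},\g_\k^{e_\k}]$ in $\g_\k^{e_\k}$ is $p$‑independent to the same statement in each Dynkin degree, which is exactly what the termwise semicontinuity inequality combined with the equality of totals accomplishes.
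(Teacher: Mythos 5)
Your proposal is correct and follows essentially the same strategy as the paper: reduce modulo each good prime, compare ranks, and invoke Lemma~\ref{R-mod} to upgrade dimension equalities over every residue field to direct-summand (hence equality) statements over $R$, with the generation claims handled by induction on the Dynkin degree exactly as in the paper's proof. The only real divergence is in part~(i), where you derive the degreewise equality $\dim_\k\big([\g_\k^{e_\k},\g_\k^{e_\k}]\cap\g_\k(i)\big)=\dim_\C\big([\g_\C^e,\g_\C^e]\cap\g_\C(i)\big)$ by summing the termwise semicontinuity inequalities against the $p$-independent total codimension, whereas the paper quotes the graded equality directly from \cite[Theorem~3(i)]{PT14} and \cite[Tables~2 and 3]{PS18}; both routes are valid.
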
	
\begin{proof} By \cite[Theorem~3(i)]{PT14} and \cite[Tables~2 and 3]{PS18}, the equalities $$\dim_\k\big([\g_\k^{e_\k},\g_\k^{e_\k}]\cap \g_\k(i)\big)\,=\,\dim_\C\big([\g_\C^e,\g_\C^e]\cap\g_\C(i)\big),\quad\ i\geq 0,$$ hold for all algebraically closed fields $\k$ whose characteristic is a good prime for $\Phi$.	
Applying Lemma~\ref{R-mod} with $V=\g_R(i)$ and $W=[\g_R^e,\g_R^e]\cap\g_R(i)$ we obtain part~(i).

Suppose $e=e_{I,J}$ is rigid and its Bala--Carter label is not listed in Table~1.  
Since the Lie algebras $\g^e_\C(0)$ and $\g_\k^{e_\k}(0)$ are perfect by \cite{Ya10}, \cite{dG}, \cite{PT14} and \cite{PS18} we can apply part~(i) (with $i=0$ and $i=1$) to conclude that 
$\g_R^e(0)\,=\,[\g_R^e(0),\g_R^e(0)]$ and $\g_R^e(1)\,=\,[\g_R^e(0),\g_R^e(1)]$.
Easy induction on $n$ relying on  Proposition~\ref{P:arprop} (for classical types), 
\cite[Theorem~1.2]{PS18} (for the exceptional types) and part~(i)
implies that each $\g_R^e(n)$ with $n\ge 1$ is contained in the Lie subring of  $\textstyle\bigoplus_{i>0}\,\g_R^e(i)$ generated by $\g_R^e(1)$. 
This yields $\g_R^e\,=\,[\g_R^e,\g_R^e]$, proving (ii).

Finally, suppose that the Bala--Carter label of $e=e_{I,J}$ is listed in Table~1. Then it follows from \cite[4.5]{PS18} that the Lie algebra $\textstyle{\bigoplus}_{i>0}\,\g_\k^{e_\k}(i)$
 is generated by $\g_\k^{e_\k}(i)$ with $i\le \ell -2$. As this holds for infinitely many primes  the Lie algebra $\textstyle{\bigoplus}_{i>0}\,\g_\C^e(i)$ must enjoy the same property. Applying Lemma~\ref{R-mod} to suitably selected $R$-submodules of $\g_R(i)$ with $i>\ell-2$ we deduce that the Lie ring $\textstyle{\bigoplus}_{i>0}\,\g_R^e(i)$ is generated by the graded components $\g_R^e(i)$ with $i\le \ell-2$. 
 
 We know from \cite[p.~7]{dG} 
 and \cite[4.4]{PS18} that the Lie algebras $\g_\C^e(0)$ and $\g_\k^{e_\k}(0)$ are perfect, that 
 $\g_\C(2)\,=\,\C e\oplus [\g_\C^e(1),\g_\C^e(1)]$, and the equalities $\g_\k^{e_\k}(2)\,=\,\k e_\k\oplus [\g_\k^{e_\k}(1),\g_\k^{e_\k}(1)]$ hold for all algebraically closed fields $\k$ whose characteristic is a good prime for $\Phi$. Applying Lemma~\ref{R-mod} to suitable $R$-submodules of $\g_R^e(0)$ and $\g_R^e(2)$ we now deduce 
 that $\g_R^e(2)\,=\, R e\oplus [\g_R(1),\g_R(1)]$ and 
 $\g_R^e\,=\,R e\oplus [\g_R^e,\g_R^e]$.  This completes the proof. 
\end{proof}


\section{Rigid finite $W$-algebras over rings and Humphreys' conjecture}
In this section we are going to finish the proof of Theorem~\ref{main1} and Theorem~\ref{main}.
\subsection{Finite $W$-algebras $U(\g_R,e)$ associated with rigid nilpotent elements} 
We retain the notation introduced in Subsection~\ref{ss:PBWtheorem}. Our discussion in Subsection~\ref{ss:RformsQ} shows that all of the notation made there makes sense in the $\g_R$-module $Q_R\subset Q_\C$. In particular, we know that the $R$-module $Q_R$ has a free $R$-basis consisting of all monomials $x^{\i} z^{\j}$ with $\i\in \Z_+^m$ and $\j\in\Z_+^s$, where $x^\i=x_1^{i_1}\cdots x_m^{i_m}$ and $z^\j=z_1^{j_1}\cdots z_j^{j_s}$. Recall that the set $\{x_1,\ldots, x_m\}$ is  a homogeneous basis of the free $R$-module $\bigoplus_{i\ge 0}\,\g_R(i)$ and the first $r$ elements of this set form a free $R$-basis of $\g_R^e$, a direct summand of $\bigoplus_{i\ge 0}\,\g_R(i)$. If $i\in\{1,\ldots, r\}$ then $x_i\in \g_{R,\,\eta_i}^e(n_i)$ and the element $\Theta(x_i)$ from (\ref{e:PBWKazhdanterm}) lies in
$U(\g_\C,e)=Q_\C^{\ad\,\m_\C}$ and has Kazhdan degree $n_i+2$.

Any element $0\ne h\in U(\g_\C,e)$ of Kazhdan degree $n$ can be written as
$h=\textstyle{\sum}_{|(\i,\j)|_e\le n}\,\lambda_{\i,\j}(h)x^\i z^\j$ where $\lambda_{\i,\j}(h)\in \k\setminus\{0\}$ for some $(\i,\j)$ with $|(\i,\j)|_e=n$.
For $k\in\Z_+$ we put $$\Lambda_k(h):=\{(\i,\j)\in\Z_+^m\times \Z_+^s\,|\,\,\lambda_{\i,\j}(h)\ne 0
\,\text{ and }\, |(\i,\j)|_e=k\}$$
and let $\Lambda_k^0(h)$ be the set of all $(\i,\j)\in\Lambda_k(h)$ such that $\i\in\Z_+^r\times\{\mathbf{0}\}$ and $\j=\mathbf{0}$.
Denote by $\Lambda^{\max}(h)$ the set of all $(\mathbf{p},\mathbf{q})\in\Lambda_n(h)$ for which the total degree $|\mathbf{p}|+|\mathbf{q}|$ assumes its minimum value.
By \cite[Lemma~4.5]{Sasha1},  $\Lambda^{\max}(h)\subseteq\Lambda_n^0(h)$.
Given $\i=(i_1,\ldots,i_r)\in\Z_+^r$ we put $\Theta^\i:=\prod_{j=1}^r\Theta(x_j)^{i_j}$, an element of $U(\g_\C,e)$, and denote by $\varepsilon_k$ 
the multi-index 
$(\delta_{k,1},\ldots,\delta_{k,r})$.

For a rigid nilpotent element $e=e_{I,J}\in\g_R$ we define 
$$U(\g_R,e)\,:=\,Q_R^{\ad\, \m_R}\cong\,{\rm End}_{\g_R}(Q_R)^{\rm op}.$$

\begin{Proposition}\label{P:Theta-k} If $e=e_{I,J}$ is rigid then $\Theta(x_k)\in U(\g_R,e)$ for all $1\le k\le r$.
\end{Proposition}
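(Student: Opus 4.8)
The natural approach is induction on the Dynkin degree. After extending $\Theta$ to an $R$-linear map $\g_\C^e\to U(\g_\C,e)$ (linearity preserves the shape \eqref{e:PBWKazhdanterm}), the assertion of the proposition becomes equivalent to $\Theta(\g_R^e(n))\subseteq U(\g_R,e)$ for every $n\ge 0$. The base of the induction is immediate from Proposition~\ref{ThetaR}: if $\ell$ denotes the smallest good prime of $\Phi$, then $\Theta(x_k)\in Q_R^{\ad\,\m_R}=U(\g_R,e)$ for every $k$ with $n_k\le\ell-2$, hence $\Theta(\g_R^e(n))\subseteq U(\g_R,e)$ for all $n\le\ell-2$.

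For the inductive step I would fix $n\ge\ell-1$ (so $n\ge 2$) and assume $\Theta(\g_R^e(m))\subseteq U(\g_R,e)$ for every $m<n$. The essential structural input is Proposition~\ref{P:R-derived}: since $e=e_{I,J}$ is rigid, the Lie ring $\bigoplus_{i>0}\g_R^e(i)$ is generated by its graded components of degree at most $\max(1,\ell-2)$, and a routine induction on degree then yields $\g_R^e(n)=\sum_{a+b=n,\ a,b\ge 1}[\g_R^e(a),\g_R^e(b)]$ (for the six orbits of Table~\ref{t:orbittable} one also uses $e\in[\g_R^e(1),\g_R^e(1)]$ in the case $n=2$, but there $n\ge\ell-1\ge 4$ anyway, so that case does not even arise). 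Consequently every $x\in\g_R^e(n)$ can be written as a finite $R$-linear combination $\sum_t[y_t,z_t]$ with $y_t,z_t$ homogeneous elements of $\g_R^e$ of Dynkin degrees $a_t,b_t<n$; the induction hypothesis gives $\Theta(y_t),\Theta(z_t)\in U(\g_R,e)$, and therefore $v:=\sum_t[\Theta(y_t),\Theta(z_t)]$ lies in the $R$-algebra $U(\g_R,e)$.

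It then remains to compare $v$ with $\Theta(x)$. On the Kazhdan-graded level $\gr U(\g_\C,e)\cong S(\g_\C^e)$ the Poisson bracket lowers degree by $2$, the symbol of $\Theta^{\i}$ is $\prod_l x_l^{i_l}$, and for homogeneous $y,z\in\g_\C^e$ one has $[\Theta(y),\Theta(z)]=\Theta([y,z])+r_{y,z}$ where $r_{y,z}$ is a $\Q$-linear combination of PBW monomials $\Theta^{\i}$ ($\i\in\Z_+^r$) every factor $\Theta(x_l)$ of which has $n_l<n$ (this is part of the PBW machinery of \cite[\S4]{Sasha1} underlying Lemma~\ref{L:PBWtheorem}: the monomial $\Theta^{\varepsilon_k}$ enters $[\Theta(y_t),\Theta(z_t)]$ only through the top Kazhdan term $\Theta([y_t,z_t])$, every other contribution having either strictly smaller Kazhdan degree or total degree $\ge 2$). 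Summing over $t$ gives $v=\Theta(x)+w$ where $w$ enjoys the same property, so that by the induction hypothesis every $\Theta^{\i}$ occurring in $w$ already lies in $U(\g_R,e)\subseteq Q_R$; thus $w$ lies in the $\Q$-span of finitely many elements of $U(\g_R,e)$.

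The one genuinely delicate point — and, I expect, the main obstacle — is to upgrade this to $w\in U(\g_R,e)$; once that is known, $\Theta(x)=v-w\in U(\g_R,e)$ and the induction closes. I would prove $w\in U(\g_R,e)$ by the reduction-modulo-$p$ technique already used for Proposition~\ref{ThetaR}: were a good prime $p$ to divide the denominator of some coefficient of $w$, then, after clearing denominators minimally, one would obtain for $\k$ of characteristic $p$ a nonzero element $\bar w'\in Q_R\otimes_R\k$ of bounded Kazhdan degree with $\mu(\bar w')=0$, where $\mu\colon Q_R\otimes_R\k\to Q_{\k,\bar{\chi}}$ is the natural surjection; the description of $\ker\mu$ recalled in the proof of Proposition~\ref{ThetaR} (it meets $\F_{p-1}$ trivially and $\F_{p}$ only along $\sum_i\k z_i^p$) together with the $\t_{e,\k}$-weight argument given there forces a contradiction. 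An alternative, perhaps cleaner, packaging is to strengthen the induction so as to prove simultaneously that $\F_d U(\g_R,e):=U(\g_R,e)\cap\F_d U(\g_\C,e)$ is a free $R$-module with basis $\{\Theta^{\i}:\sum_l i_l(n_l+2)\le d\}$ for every $d$; then the integrality of $w$ becomes automatic from the fact that $w$ is supported on PBW monomials built from the generators handled at earlier stages of the induction. Either way, everything apart from this integrality step is bookkeeping with the Kazhdan filtration and the generation statements of Proposition~\ref{P:R-derived}.
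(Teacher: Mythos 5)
Your skeleton coincides with the paper's: induction on the Dynkin degree, base case from Proposition~\ref{ThetaR}, the decomposition $x_k=\sum_i[u_i,v_i]$ supplied by Proposition~\ref{P:R-derived}, and a comparison of $h_1:=\sum_i[\Theta(u_i),\Theta(v_i)]$ with $\Theta(x_k)$ modulo monomials in lower-degree generators. Where you diverge is precisely the step you flag as delicate, and the paper's resolution is simpler than either of your two suggestions: it never expands the correction term $w$ in the $\Theta^{\i}$-basis with a priori only rational coefficients. Instead it strips $h_1$ down to $\Theta(x_k)$ by an iterated subtraction in which the scalar removed at each stage is a coefficient of the \emph{current} element with respect to the free $R$-basis $\{x^{\i}z^{\j}\}$ of $Q_R$; since every intermediate element lies in $U(\g_R,e)\subseteq Q_R$, these scalars are automatically in $R$, and since the monomials $\Theta^{\i}$ being subtracted have $|\i|\ge 2$ or strictly smaller Kazhdan degree, their factors are covered by the induction hypothesis. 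Uniqueness in Lemma~\ref{L:PBWtheorem} then identifies the terminal element with $\Theta(x_k)$, so no separate integrality lemma is needed. Of your two proposed fixes, the second (strengthening the induction so that the part of $U(\g_R,e)$ of Kazhdan degree $\le d$ is free on the relevant $\Theta^{\i}$) is essentially this same argument and is what the paper establishes as Theorem~\ref{T:part(i)}(i); it does close the gap. The first fix, reduction modulo $p$, does not go through as sketched: the description of $\ker\mu$ in the proof of Proposition~\ref{ThetaR} only controls the filtration pieces $\F_d$ with $d\le p$, whereas here the Kazhdan degree $n_k+2$ may well exceed the smallest good prime $\ell$, so for $p=\ell$ one would need to understand $\ker\mu$ in filtration degrees beyond $p$, where it is no longer just $\sum_i\k z_i^p$ and the weight argument no longer applies. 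In short: same approach, with the integrality step best handled by reading coefficients off the free $R$-basis of $Q_R$ (equivalently, your second packaging), not by reduction modulo $p$.
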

\begin{proof} We use induction on $n_k\in \Z_+$.
If $n_k\le \ell-2$ the statement follows from Proposition~\ref{ThetaR}. Suppose now that $n_k>\ell-2$. There is a unique $\C$-linear injection  
$\Theta\colon\, \g_\C^e\to U(\g_\C,e)$ extending the assignment $x_i \mapsto \Theta(x_i)$. By Proposition~\ref{P:R-derived},
$x_k=\sum_{i=1}^t\,[u_i,v_i]$ for some $u_i\in\g_R^e(a_i)$ and
$v_i\in\g_R^e(b_i)$ where $a_i$ and $b_i$ are positive integers with $a_i+b_i=n_k$. 

Set $h_1:=\sum_{i=1}^t[\Theta(u_i),\Theta(v_i)]$. By our induction assumption, $h_1\in U(\g_R,e)$. Moreover, it follows from \cite[Theorem~4.6(iv)]{Sasha1} that 
$(\varepsilon_k,\mathbf{0})$ is the only element of total degree $1$ in $\Lambda^{\max}(h_1)$. If $({\i}(1),{\bf 0}),\ldots,({\i}(d),{\bf 0})$ are all elements of 
$\Lambda^{\max}(h_1)\setminus\{(\varepsilon_k,\mathbf{0})\}$
of the smallest total degree we set 
$h_2:=h_1-\sum_{j=1}^d\lambda_{\i(j),\bf{0}}(h_1)\Theta^{\i(j)}$. Since
$|\i(j)|\ge 2$ for all $j$ our induction assumption entails that
$h_2\in U(\g_R,e)$. The formula displayed in \cite[p.~27]{Sasha1}
shows that $(\varepsilon_k,\mathbf{0})\in\Lambda^{\max}(h_2)$ and
the smallest total degree of the elements in $\Lambda^{\max}(h_2)\setminus\{(\varepsilon_k,\mathbf{0})\}$ is bigger then that in $\Lambda^{\max}(h_1)\setminus\{(\varepsilon_k,\mathbf{0})\}$. 

Continuing the process started above we eventually arrive at an element $h_N\in U(\g_R,e)$ such that
$\Lambda^{\max}(h_N)=\{(\varepsilon_k,\mathbf{0})\}$. Since the set $\{(\i,\j)\in\Z_+^r\times \Z_+^s\,|\,\,|(\i,\j)|_e=n_k+2\}$ is finite this will require finitely many iterations.
If there is $q<n_k+2$ such that $\Lambda_q^0(h_N)\ne \emptyset$ and
$\Lambda_j^0(h_N)=\emptyset$ for all $q<j\le n-1$ then all terms of $h_N$
associated with $\Lambda_q^0(h_N)$ have Kazhdan degree $<n_k+2$. So we can 
use our induction assumption and safely clear all of them (including the terms of total degree $1$) by applying the same procedure. After finitely many steps we shall arrive at an element $h\in U(\g_R,e)$ with the property that
$\Lambda^{\max}(h)=\{(\varepsilon_k,\mathbf{0})\}$ and $\Lambda_j^0(h)=\emptyset$ for all $0\le j<n_k+2$. By the uniqueness in Lemma~\ref{L:PBWtheorem} we have $h=\Theta(x_k)$ and the proof is complete.
\end{proof}

\begin{Theorem}\label{T:part(i)}
Suppose $e=e_{I,J}$ is rigid and let $\k$ be an algebraically closed field whose characteristic is a good prime for $\Phi$.
\begin{enumerate}	
\item[(i)\,]
The set $\{\Theta^{\i}\,|\,\,\i\in\Z^r_+\}$ is a free basis of the $R$-module  $U(\g_R,e)$.

\smallskip

\item[(ii)\,] $U(\g_R,e)\otimes_R\C\,\cong\,U(\g_\C,e)$ as $\C$-algebras.

\smallskip

\item[(iii)\,] 
$U(\g_R,e)\otimes_R\k\,\cong\,U(\g_\k,e_\k)$ as $\k$-algebras.
\end{enumerate}
\end{Theorem}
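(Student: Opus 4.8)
The plan is to reduce Theorem~\ref{T:part(i)} to assertion (i) and then derive (ii) and (iii) by flat base change, the only genuinely new ingredient being to pin down the ring of definition of the $M$-action. By Proposition~\ref{P:Theta-k} each $\Theta(x_k)$ lies in $U(\g_R,e)=Q_R^{\ad\,\m_R}$, hence so does every ordered monomial $\Theta^{\i}=\Theta(x_1)^{i_1}\cdots\Theta(x_r)^{i_r}$ with $\i\in\Z_+^r$. The characteristic-$0$ PBW theorem (\cite[Theorem~4.5]{Sasha1}, recorded in Lemma~\ref{L:PBWtheorem}) says that $\{\Theta^{\i}\}$ is a $\C$-basis of $U(\g_\C,e)$; since $Q_R$ is a free $R$-module on the PBW monomials, the inclusion $Q_R\hookrightarrow Q_\C=Q_R\otimes_R\C$ already makes $\{\Theta^{\i}\}$ an $R$-linearly independent subset of $Q_R$. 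So it remains to prove that these monomials span $U(\g_R,e)$ over $R$.

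For the spanning I would argue by induction on Kazhdan degree. Write $d(\i)=\sum_k i_k(n_k+2)=|(\i,\mathbf{0})|_e$ and $x^{\i}=x_1^{i_1}\cdots x_r^{i_r}$. The characteristic-$0$ PBW theorem gives, in $Q_\C$, an expansion $\Theta^{\i}=x^{\i}+(\text{terms of Kazhdan degree}<d(\i))$; as every $\Theta(x_k)$, hence $\Theta^{\i}$, lies in $Q_R$ and $\F_{d-1}Q_\C\cap Q_R=\F_{d-1}Q_R$, this identity already holds in $Q_R$. Now take $0\neq h\in U(\g_R,e)$ and write $h=\sum_{\i}c_{\i}\Theta^{\i}$ with $c_{\i}\in\C$ (possible since $\{\Theta^{\i}\}$ is a $\C$-basis of $U(\g_\C,e)\supseteq U(\g_R,e)$), and set $n=\max\{d(\i)\mid c_{\i}\neq 0\}$, so that $h\in\F_nQ_\C\cap Q_R=\F_nQ_R$. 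Passing to $\F_nQ_R/\F_{n-1}Q_R$ — a free $R$-module with basis the images of the PBW monomials of Kazhdan degree $n$ — the class of $h$ equals $\sum_{d(\i)=n}c_{\i}\,\overline{x^{\i}}$, a combination of distinct basis vectors, whence $c_{\i}\in R$ whenever $d(\i)=n$. Then $h-\sum_{d(\i)=n}c_{\i}\Theta^{\i}\in U(\g_R,e)$ is an $R$-combination of $\Theta^{\i}$'s of Kazhdan degree $<n$, and the induction (base case $h\in\F_0Q_R=R\cdot 1$) gives $h\in\sum_{\i}R\,\Theta^{\i}$. This proves (i).

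Assertion (ii) is then formal: $Q_R\otimes_R\C=Q_\C$, the multiplications on $U(\g_R,e)$ and $U(\g_\C,e)$ are both induced from those on $U(\g_R)$ and $U(\g_\C)$, and the base-change map carries the $R$-basis $\{\Theta^{\i}\}$ of $U(\g_R,e)$ onto the $\C$-basis $\{\Theta^{\i}\}$ of $U(\g_\C,e)$, hence is an algebra isomorphism. For (iii) the main obstacle is that in characteristic $p$ one has $U(\g_\k,e_\k)=Q_\k^{\Ad M}$ rather than $Q_\k^{\ad\,\m_\k}$ (see \cite[Lemma~4.4]{GT18}), so I must check that the image $\bar\Theta(x_k)$ of $\Theta(x_k)$ in $Q_\k=Q_R\otimes_R\k$ is $\Ad M$-invariant. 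The key observation is that the $M$-action on $Q$ constructed in Subsection~\ref{ss:finiteWalgebras} is built from the adjoint action of the unipotent root subgroups generating $M$, which is integral by Chevalley's formulae; hence $M$ acts on $Q_R$ over $R$. Over $\Q$ the group $M_\Q$ is connected unipotent and acts rationally, so $Q_\Q^{\ad\,\m_\Q}=Q_\Q^{M_\Q}$; since $\Theta(x_k)\in Q_R^{\ad\,\m_R}$, the identity $\Ad(x_\gamma(t))\,\Theta(x_k)=\Theta(x_k)$ holds in $Q_\Q\otimes_\Q\Q[t]$, hence in $Q_R\otimes_R R[t]$ because $Q_R$ is $R$-free. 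Reducing modulo $p$ yields $\Ad(x_\gamma(t))\,\bar\Theta(x_k)=\bar\Theta(x_k)$ for all generators of $M$, i.e. $\bar\Theta(x_k)\in Q_\k^{\Ad M}=U(\g_\k,e_\k)$.

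Finally, since $\Theta(x_k)\in Q_R$ its coordinates $\lambda^k_{\i,\j}$ in (\ref{e:PBWKazhdanterm}) lie in $R$, so $\bar\Theta(x_1),\ldots,\bar\Theta(x_r)$ are elements of $U(\g_\k,e_\k)$ satisfying (\ref{e:PBWKazhdanterm}) with the reductions $\bar\lambda^k_{\i,\j}$ (the vanishing conditions on the coefficients being preserved mod $p$). The characteristic-$p$ PBW theorem — the second half of Lemma~\ref{L:PBWtheorem} — then shows that $\{\bar\Theta^{\i}\mid\i\in\Z_+^r\}$ is a $\k$-basis of $U(\g_\k,e_\k)$. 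Consequently the natural $\k$-algebra homomorphism $U(\g_R,e)\otimes_R\k\to Q_\k$ carries the $\k$-basis $\{\Theta^{\i}\otimes 1\}$ of $U(\g_R,e)\otimes_R\k$ bijectively onto the $\k$-basis $\{\bar\Theta^{\i}\}$ of $U(\g_\k,e_\k)$, so it restricts to an isomorphism $U(\g_R,e)\otimes_R\k\xrightarrow{\ \sim\ }U(\g_\k,e_\k)$ of $\k$-algebras, proving (iii). I expect that the descent of the $M$-action over $R$, together with the transfer of $M$-invariance through $\Q$, will be the only delicate step; everything else is bookkeeping with the Kazhdan filtration and flatness of $\C$ and $\k$ over $R$.
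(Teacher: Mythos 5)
Your reduction of (ii) and (iii) to (i), and your treatment of the $M$-invariance in (iii) via the integrality of the action of the root subgroups (equivalently, of the divided powers $\tfrac{1}{k!}(\ad e_\beta)^k$) on $Q_R$, follow the same route as the paper and are fine. The gap is in the spanning argument for (i). Your key identity ``$\Theta^{\i}=x^{\i}+(\text{terms of Kazhdan degree}<d(\i))$'' is false: by Lemma~\ref{L:PBWtheorem} each generator $\Theta(x_k)$ contains, besides $x_k$, monomials $x^{\i}z^{\j}$ with $|(\i,\j)|_e$ \emph{equal} to $n_k+2$ (only the total degree is forced to be $\ge 2$). This is already visible in \eqref{e:zerogenerators}: for $x\in\g^e(0)$ the correction $-\tfrac12\sum_i z_i[x,z_i']$ has Kazhdan degree $2=n_k+2$. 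Hence the class of $\Theta^{\i}$ in $\F_nQ_R/\F_{n-1}Q_R$ is $\overline{x^{\i}}$ \emph{plus} further monomials of the same Kazhdan degree and strictly larger total degree, so the class of $h$ is not $\sum_{d(\i)=n}c_{\i}\overline{x^{\i}}$, and you cannot read off $c_{\i}\in R$ for all $\i$ with $d(\i)=n$ in one step.

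The statement you want is still true, but it requires a secondary induction on total degree inside each Kazhdan degree, which is exactly how the paper proceeds: by \cite[Lemma~4.5]{Sasha1} the terms of minimal total degree among the top Kazhdan-degree terms of any $h\in U(\g_\C,e)$ are supported on monomials $x^{\bf a}$ with ${\bf a}\in\Z_+^r$; since $h\in Q_R$ and the PBW monomials form a free $R$-basis of $Q_R$, those coefficients lie in $R$; subtracting the corresponding $R$-combination of $\Theta^{\bf a}$ produces an element of $U(\g_R,e)$ whose top part has strictly larger minimal total degree (the formula on \cite[p.~27]{Sasha1}), and the process terminates because there are finitely many $(\i,\j)$ of a given Kazhdan degree. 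With your one-shot extraction replaced by this iteration, the rest of your argument goes through; note also that the linear independence you take for granted does hold, by base change to $\C$.
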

\begin{proof} Let $U_R$
be the $R$-submodule of $U(\g_\C,e)$ spanned by all $\Theta^{\i}$ with $\i\in\Z_+^r$. It follows from Proposition~\ref{P:Theta-k} that $U_R\subseteq U(\g_R,e)$.
In order to prove that $U(\g_R,e)\subseteq U_R$ we are going to use induction on the Kazhdan degree of elements $h\in U(\g_R,e)$. Assume that  all elements $h\in U(\g_R,e)$ of Kazhdan degree $\le d$ lie in $U_R$ (this assumption is obviously true when $d=0$).

Now let $h\in U(\g_R,e)$ be an element of Kazhdan degree $d+1$ and let $n(h)$ be the total degree of the elements $(\i,\j)\in\Lambda^{\max}(h)$. Our discussion in Subsection~\ref{ss:RformsQ} shows that  $\F_d Q_R$, the $d$-th component of the Kazhdan filtration of $Q_R\subset Q_\C$, is a direct summand of $\F_{d+1}Q_R$. As $h\in Q_R$, combining \cite[Theorem~4.6(ii)]{Sasha1} with the formula displayed in \cite[p.~27]{Sasha1} implies that
$$h=\sum_{{\bf a}\in \Z_+^r,\,\,|{\bf a}|_e=d+1,\,\, |{\bf a}|=n(h)
}\,
\lambda_{\bf a}x^{\bf a}+
\sum_{|(\i,\j)|_e=d+1,\,|\i|+|\j|>n(h)}\,\lambda_{\i,\j}x^{\i}z^{\j}+h',$$
where all $\lambda_{\bf a}$ and $\lambda_{\i,\j}$ are in $R$ and $h'\in \F_{d}(Q_R)$. We now set $$h_1:=\,h-\sum_{{\bf a}\in \Z_+^r,\,|{\bf a}|_e=d+1,\, |{\bf a}|=n(h)
}\,
\lambda_{\bf a}\Theta^{\bf a}.$$
Clearly, $h_1\in \F_{d+1}U(\g_R,e)$. If $h_1\not\in \F_{d}U(\g_R,e)$ then the formula displayed in \cite[p.~27]{Sasha1} entails that the total degree of the elements $(\i,\j)\in\Lambda^{\max}(h_1)$ is bigger than $n(h)$.	
In this case we replace $h$ by $h_1$ and argue as before. After finitely many iterations we shall arrive at an element $h_N\in\F_d U(\g_R,e)$ such that
$h\equiv h_N\pmod {U_R}$. As $h_N\in U_R$ by our induction assumption we conclude that $U_R=U(\g_R,e)$. This proves part~(i).

Part~(ii) now follows immediately from (i) and \cite[Theorem~4.6(ii)]{Sasha1}. 
Thanks to Lemma~\ref{L:PBWtheorem}, in order to prove (iii) we just need to show that all elements $\Theta(x_i)\otimes_R 1\in Q_R\otimes_R\k$ with $1\le i\le r$ are invariant under the action of our unipotent group
$M_\k\subset G_\k=G_\Z(\k)$ on $Q_R\otimes_R \k$. Recall that $M_\k$ is generated by some root subgroups
$U_{\k,\beta}$ of $G_\k$. 
As explained in \cite[I.7.11]{Jan03}, the adjoint action of the $\Z$-group scheme $G_\Z$ gives rise to the natural action of the distribution algebra ${\rm Dist}(G_\Z)$ on $\g_\Z=\Lie(G_\Z)$ and $U(\g_\Z)$.
This implies that the unipotent group subscheme $M_\Z$ of $G_\Z$ generated by all $U_{\Z,\,\beta}$'s involved in the definition of $M$ acts on
$Q_R=U(\g_R)/U(\g_R)\m_{R,\chi}$.
Since each $\Theta(x_i)$ lies in $Q_R^{{\rm ad}\,\m_R}$ by Proposition~\ref{P:Theta-k}
all endomorphisms $\frac{1}{k!}(\ad\,e_\beta)^k$ with $k>0$ annihilate  
$\Theta(x_i)$. This implies that each $\Theta(x_i)\otimes_R 1$ lies in $U(\g_\k,e_\k)\,=\,(Q_R\otimes_R\k)^{\Ad\,M_\k}$ and we finish the proof by applying Lemma~\ref{L:PBWtheorem}. 
 \end{proof}
\subsection{Augmentation ideals of rigid finite $W$-algebras $U(\g_R,e)$}
In this Subsection we are going to prove Theorem~\ref{main1}(ii).
We continue to assume that $e=e_{I,J}$ is a rigid nilpotent element.
\begin{Proposition}\label{P:idealI1}
Suppose $e$ is not listed in Table~1 and let $I_R$ be the two-sided ideal of $U(\g_R,e)$ generated by all commutators $[u,v]=uv-vu$ with $u,v\in U(\g,e)$. Then $I_R$ is a free $R$-module and $U(\g_R,e)\,=\,R\,1\oplus I_R$.
\end{Proposition}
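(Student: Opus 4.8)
The plan is to reduce everything to the single assertion that $\Theta(x_k)\in I_R$ for each $k=1,\ldots,r$. Granting this, the proposition falls out quickly. Since $U(\g_R,e)$ is generated as an $R$-algebra by $1$ together with $\Theta(x_1),\ldots,\Theta(x_r)$ (Theorem~\ref{T:part(i)}(i)) and $I_R$ is a two-sided ideal, every product involving at least one $\Theta(x_k)$ lies in $I_R$; as $\Theta^{\mathbf 0}=1$ this forces $U(\g_R,e)=R\,1+I_R$ and moreover $\sum_{\mathbf i\neq\mathbf 0}R\,\Theta^{\mathbf i}\subseteq I_R$. To see that $R\,1\cap I_R=0$, suppose $c\,1\in I_R$ for some $0\neq c\in R$; applying the base change of Theorem~\ref{T:part(i)}(ii) we would obtain $c\,1\in I_\C$, the two-sided ideal of $U(\g_\C,e)$ generated by commutators, hence $1\in I_\C$ and $I_\C=U(\g_\C,e)$. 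This is impossible, because $U(\g_\C,e)$ admits a one-dimensional representation by the solution of the conjecture of \cite{Pr07a} in the rigid case (\cite{Pr10,PT14,GRU,Pr14}), so $I_\C$ is proper. Hence $U(\g_R,e)=R\,1\oplus I_R$, and $I_R$ is free, being a submodule of the free module $U(\g_R,e)$ over the principal ideal domain $R$.

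It remains to prove $\Theta(x_k)\in I_R$, which I will do by induction on the Dynkin degree $n_k$ of $x_k$. For $n_k\in\{0,1\}$ I invoke Proposition~\ref{P:R-derived}(ii) — this is the only place where the exclusion of the orbits in Table~1 enters — to write $\g_R^e(0)=[\g_R^e(0),\g_R^e(0)]$ and $\g_R^e(1)=[\g_R^e(0),\g_R^e(1)]$; thus $x_k=\sum_i[u_i,v_i]$ with $u_i\in\g_R^e(0)$ and $v_i\in\g_R^e(0)\cup\g_R^e(1)$. Lemma~\ref{L:JEMS} gives $[\Theta(u_i),\Theta(v_i)]=\Theta([u_i,v_i])$; this identity is valid over $R$ since, by Proposition~\ref{P:Theta-k}, all of $\Theta(u_i),\Theta(v_i),\Theta([u_i,v_i])$ lie in $U(\g_R,e)$, inside which it holds because it holds in the overring $U(\g_\C,e)$. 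Summing and using $R$-linearity of $\Theta$ we get $\Theta(x_k)=\sum_i[\Theta(u_i),\Theta(v_i)]\in I_R$.

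For the inductive step assume $n_k\geq 2$ and that $\Theta(x_l)\in I_R$ whenever $n_l<n_k$. By Proposition~\ref{P:R-derived}(ii) the graded Lie ring $\bigoplus_{i>0}\g_R^e(i)$ is generated in degree $1$, so its degree-$n_k$ part equals $[\g_R^e(1),\g_R^e(n_k-1)]$, and this identity respects the $\t_{e,R}$-weight grading of $\g_R^e$. Hence I may write $x_k=\sum_i[u_i,v_i]$ with $u_i\in\g_R^e(1)$ and $v_i\in\g_R^e(n_k-1)$ homogeneous $\t_{e,R}$-weight vectors whose weights add up to $\eta_k$, the weight of $x_k$. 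Put $h:=\sum_i[\Theta(u_i),\Theta(v_i)]\in I_R$ (note $\Theta(u_i),\Theta(v_i)\in U(\g_R,e)$ by Proposition~\ref{P:Theta-k}). Expanding $h$ in the PBW basis $\{\Theta^{\mathbf i}\}$ and arguing exactly as in the proof of Proposition~\ref{P:Theta-k} (via \cite[Theorem~4.6(iv)]{Sasha1}): the unique PBW monomial of Kazhdan degree $n_k+2$ and total degree $1$ occurring in $h$ is $\Theta(x_k)$, with coefficient $1$; every other PBW monomial $\Theta^{\mathbf i}$ with $|\mathbf i|\geq 1$ occurring in $h$ is a product of generators $\Theta(x_l)$ with $n_l<n_k$, hence lies in $I_R$ by the inductive hypothesis; and the constant term of $h$ vanishes, because $h$ is a $\t_{e,R}$-weight vector of weight $\eta_k$ (the $\Theta(x_i)$ being weight vectors, by the uniqueness in Lemma~\ref{L:PBWtheorem}) while $\eta_k\neq 0$ — indeed the zero weight space of $\g_R^e$ is $\l_R^e=\t_{e,R}$, which is concentrated in Dynkin degree $0$, since $e$ is distinguished in $\l_R$. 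Therefore $h=\Theta(x_k)+(\text{an element of }I_R)$, so $\Theta(x_k)\in I_R$, completing the induction.

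The main obstacle is the control of the constant term in the inductive step: a priori $\sum_i[\Theta(u_i),\Theta(v_i)]$ could carry a nonzero scalar multiple of $1$, which would only yield $\Theta(x_k)\in R\,1+I_R$. Forcing that scalar to be zero is precisely where the $\t_{e,R}$-weight grading, combined with the fact that $e$ is distinguished in its minimal Levi subalgebra $\l_R$ (so that the zero weight space of $\g_R^e$ lives entirely in Dynkin degree $0$), is essential. (If one settles for $\Theta(x_k)\in R\,1+I_R$ one still gets $U(\g_R,e)=R\,1+I_R$, and the direct sum decomposition then follows as above purely from the characteristic-zero input.) A secondary point requiring care is that Lemma~\ref{L:JEMS}, \cite[Theorem~4.6(iv)]{Sasha1} and the clearing procedure of Proposition~\ref{P:Theta-k} are being run with $R$-coefficients; this is legitimate because $U(\g_R,e)\hookrightarrow U(\g_\C,e)$ by Theorem~\ref{T:part(i)}(ii) and, by Proposition~\ref{P:Theta-k}, all elements produced along the way remain inside $U(\g_R,e)$.
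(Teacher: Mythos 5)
Your overall strategy --- induction on the Dynkin degree $n_k$, with the base case $n_k\le 1$ handled by perfectness of $\g_R^e(0)$ together with Lemma~\ref{L:JEMS}, and the inductive step run through the reduction procedure of Proposition~\ref{P:Theta-k} --- is the paper's strategy, and your treatment of the direct sum decomposition and of freeness via the characteristic-zero input is fine. But the main line of your inductive step contains a genuine error. You claim the constant term of $h=\sum_i[\Theta(u_i),\Theta(v_i)]$ must vanish because $h$ has $\t_{e,R}$-weight $\eta_k\ne 0$, and you justify $\eta_k\ne 0$ by asserting that the zero weight space of $\g_R^e$ is $\l_R^e=\t_{e,R}$, concentrated in Dynkin degree $0$. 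That assertion is false: the centraliser of $\t_{e,R}$ in $\g_R^e$ is $\l_R^e=\t_{e,R}\oplus[\l_R,\l_R]^e$, and since $e$ is a nonzero distinguished nilpotent element of $[\l_R,\l_R]$, the second summand is a nonzero nilpotent Lie ring concentrated in even Dynkin degrees $\ge 2$ and containing $e$ itself. (What is true, and what the proof of Proposition~\ref{e-Delta}(iv) establishes, is that $\t_{e,R}$ is a maximal toral subalgebra of $\g_R^e$ --- not that it equals its own centraliser there.) So there are generators $x_k$ with $n_k\ge 2$ and $\eta_k=0$, and for these your weight argument says nothing about the constant term.

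Moreover the stronger conclusion you are after, $\Theta(x_k)\in I_R$ for all $k$, is false in general: it would force the unique one-dimensional representation of $U(\g_\C,e)$ to kill every $\Theta(x_k)$, in particular $\Theta(e)$; but the Casimir identity \eqref{e:Casimirequation} together with $C_0\in I_\C$ (which follows from perfectness of $\g^e(0)$ and Lemma~\ref{L:JEMS}) gives $\Theta(e)\equiv\frac{q-c}{2}\pmod{I_\C}$, and $q\ne c$ in general --- the central character of the unique small module is the nontrivial one attached to the Arthur--Barbasch--Vogan weight, cf.\ Remark~\ref{rem}. Fortunately your parenthetical fallback --- running the induction with the weaker hypothesis that $\Theta(x_l)-c_l\,1\in I_R$ for some $c_l\in R$, which still yields $U(\g_R,e)=R\,1+I_R$, the sum being direct because $I_\C$ is proper --- is exactly the paper's proof. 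The argument is therefore repaired by deleting the weight argument and carrying the constants $c_k$ through the reduction process of Proposition~\ref{P:Theta-k}, using $\Theta(x_i)-c_i\,1$ in place of $\Theta(x_i)$ at each step.
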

\begin{proof}
By \cite[Theorem~2]{PT14} and \cite[Theorem~A]{Pr14}, the algebra $U(\g_\C,e)$ admits a unique $1$-dimensional representation. This implies that the $\C$-saturation of $I_R$ is a two-sided ideal of codimension $1$ in $U(\g_\C,e)$. It follows that $I_R$ is a proper ideal of $U(\g_R,e)$. 
Thanks to Theorem~\ref{T:part(i)}(i), in order to prove the proposition it suffices to show that
for every $k\in\{1,\ldots, r\}$ there exists $c_k\in R$ such that $\Theta(x_k)-c_k\,1\in I$. This would ensure that the set
$$\big\{(\Theta(x_1)-c_1\,1)^{i_1}\cdots(\Theta(x_r)-c_r\,1)^{i_r}\,|\,\,(i_1,\ldots,i_r)\in\Z_+^r
\setminus\{(0,\ldots,0)\}\big\}$$
 is a free basis of the $R$-module $I_R$.
 
 We argue by induction on $n_k$.
 If $n_k\in\{0,1\}$ then  Proposition~\ref{P:R-derived}(ii) in conjunction with Lemma~\ref{L:JEMS} shows that $\Theta(x_k)\in I_R$. So  $c_k=0$ in this case. Suppose that $n_k>1$ and for every $i\le r$ with $n_i<n_k$ there is $c_i\in R$ such that $\Theta(x_i)-c_i\,1\in I_R$.
 By Proposition~\ref{P:R-derived}(ii) we have 
 $x_k=\sum_{i=1}^t\,[u_i,v_i]$ for some $u_i\in\g_R^e(a_i)$ and $v_i\in\g_R^e(b_i)$ where $a_i$ and $b_i$ are positive integers with $a_i+b_i=n_k$. At this point we repeat almost verbatim the argument used in the proof of Proposition~\ref{P:Theta-k} the only difference being that
 at each step of the reduction process outlined there we use $\Theta(x_i)-c_i\,1$ instead of 
 $\Theta(x_i)$. We stress that in the present case the reduction process will stop
 at an element of $\F_0U(\g_R,e)$, implying that
 $\Theta(x_k)-c_k\,1\in I_R$ for some $c_k\in R$.
 \end{proof}

It remains to consider the case where the Bala--Carter label of $e=e_{I,J}$ is listed in Table~1. By our choice of $\kappa$ the Casimir element
$$C\,=\, 2\sum_{\alpha\in\Phi_+(\Pi)}\,\frac{1}{\kappa(e_\alpha,e_{-\alpha})}e_\alpha e_{-\alpha}+\sum_{i=1}^l\,t_ih_{\alpha_i}+\sum_{\alpha\in\Phi_+(\Pi)}\,
\frac{h_\alpha}{\kappa(e_\alpha,e_{-\alpha})}$$ lies in the centre of $U(\g_R)$ and hence gives rise to a central element of $U(\g_R,e)\,=\,Q_R^{\ad\,\m_R}$. We identify $C$ with its image in $U(\g_R,e)$.
Regarded as an element of $Q_R$ the Casimir element has form
$C=2e+\sum_{i=1}^s\,y_iz_i+C'$ where $C'$ lies in $U(\g_R(0))$ and $y_1,\ldots, y_s\in\g(1)$; see \cite[p.~578]{Pr14}.

 Let $U_0(\g_R,e)$ denote the unital $R$-subalgebra of $U(\g_R,e)$ generated by
$\Theta(\g_R^e(0))$. 
It is immediate from Lemma~\ref{L:JEMS} and Theorem~\ref{T:part(i)} that $U_0(\g_R,e)\cong U(\g_R^e(0))$ as $R$-algebras. We denote by $U_0^+(\g_R,e)$ the augmentation ideal of $U_0(\g_R,e)$.
The preceding remarks together with Theorem~\ref{T:part(i)}(i) imply that  $C-2\Theta(e)$ lies in $U_0(\g_R,e)$ and has Kazhdan degree $\le 4$ (one should keep in mind here that $C-2\Theta(e)$ is fixed by the involution $\sigma\in\Aut U(\g_\C,e)$ defined in \cite[2.2]{Pr07a} and hence cannot have nonzero terms proportional to $\Theta(x_i)$ with  $x_i\in \g^e(1)$).
It follows that there exist $C_0\in U_0^+(\g_R,e)$ and $c\in R$ such that
\begin{eqnarray}
\label{e:Casimirequation}
C=2\Theta(e)+C_0+c\,1.
\end{eqnarray}

It is proved in \cite{Pr14} by direct computations that $U(\g_\C)$ contains a primitive ideal $\tilde{I}$ whose associated variety coincides with the Zariski closure of the adjoint orbit $\O_\C(I,J)$ and appears with multiplicity $1$ in the associated cycle of $\tilde{I}$. Furthermore, it is shown in {\it loc.\,cit.} that there is an irreducible highest weight module $L(\lambda)$ for $\g_\C$ such that
$\lambda(h_{\alpha_i})\in R$ for all $1\le i\le l$ and $\tilde{I}={\rm Ann}_{U(\g_\C)}\,L(\lambda)$. It is well-known that the Casimir element $C$ acts on $L(\lambda)$ 
as the scalar operator with eigenvalue $q=\langle \lambda,\lambda+2\rho\rangle$ where $\rho$ is the half-sum of all roots in $\Phi_+(\Pi)$ and $\langle\,\cdot\,,\cdot\,\rangle$ is 
the symmetric bilinear form on $\t_\C^*$ induced by the restriction of $\kappa$ to $\t_\C$; see \cite[Ch.~VIII, \textsection 6, No.~4]{B}, for example. Our choice of $\kappa$ in Subsection~\ref{ss:geR} implies that $q\in R$.
\begin{Proposition}\label{P:idealI2}
Suppose $e$ is listed in Table~1 and let $I_R$ be the two-sided ideal of $U(\g_R,e)$ generated by $\Theta(e)-\frac{q-c}{2}\,1$ and all commutators $[u,v]$ with $u,v\in U(\g_R,e)$. Then $I_R$ is a free $R$-module and $U(\g_R,e)=R\,1\oplus I_R$.	
\end{Proposition}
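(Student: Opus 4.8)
The plan is to mimic the proof of Proposition~\ref{P:idealI1}, carrying the generator $\Theta(e)$ through the argument separately because now $e\notin[\g_R^e,\g_R^e]$. By Proposition~\ref{P:R-derived}(iii) we have $\g_R^e=Re\oplus[\g_R^e,\g_R^e]$, and by Proposition~\ref{P:R-derived}(i) each $[\g_R^e,\g_R^e]\cap\g_R(i)$ is a direct summand of $\g_R^e(i)$; I would therefore fix the PBW basis $x_1,\dots,x_r$ of $\g_R^e$ so that $x_1=e$ and $x_2,\dots,x_r$ form a homogeneous $R$-basis of $[\g_R^e,\g_R^e]$. By Theorem~\ref{T:part(i)}(i) the set $\{\Theta^\i\mid\i\in\Z_+^r\}$ is then a free $R$-basis of $U(\g_R,e)$, and since $\Theta(x_j)$ has Kazhdan degree $n_j+2$ this basis is adapted to the Kazhdan filtration: $\F_dU(\g_R,e)=\bigoplus_{|\i|_e\le d}R\,\Theta^\i$, with $\F_0U(\g_R,e)=R\,1$.

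First I would show that $I_R$ is a proper ideal. Saturating over $\C$, the two-sided ideal $I_\C:=I_R\otimes_R\C\subseteq U(\g_\C,e)$ contains all commutators and the element $\Theta(e)-\tfrac{1}{2}(q-c)\,1$, so it is enough to exhibit a character $\phi\colon U(\g_\C,e)\to\C$ annihilating $I_\C$. Such a one-dimensional representation $\phi$ exists and is unique by \cite[Theorem~2]{PT14} and \cite[Theorem~A]{Pr14}. It kills every commutator; moreover, since $\g_\C^e(0)$ is perfect, Lemma~\ref{L:JEMS} shows that $\Theta(\g_R^e(0))$ consists of commutators, so $\phi$ vanishes on $U_0^+(\g_R,e)$ and in particular $\phi(C_0)=0$. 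Via Skryabin's equivalence $\phi$ gives rise to a completely prime primitive ideal of $U(\g_\C)$ whose associated variety equals $\overline{\O_\C(I,J)}$ and appears in its associated cycle with multiplicity one; by \cite{Pr14} this ideal is $\tilde I=\Ann_{U(\g_\C)}L(\lambda)$, and the Casimir $C$ acts on $L(\lambda)$, hence on the $\g_\C$-module attached to $\phi$, by $q=\langle\lambda,\lambda+2\rho\rangle$, so $\phi(C)=q$. Substituting into \eqref{e:Casimirequation} yields $\phi(\Theta(e))=\tfrac{1}{2}(q-c)$, so $\phi$ annihilates $I_\C$ and $I_R$ is proper; in particular $R\,1\cap I_R=0$, as a nonzero scalar multiple of $1$ lying in $I_R$ would force $I_\C=U(\g_\C,e)$.

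Next I would prove $U(\g_R,e)=R\,1+I_R$ by induction on the Kazhdan degree, showing $\F_dU(\g_R,e)\subseteq R\,1+I_R$ for all $d\ge0$; the case $d=0$ is clear. Since $U(\g_R,e)/I_R$ is commutative, the inductive step reduces to showing that every basis monomial $\Theta^\i$ with $|\i|_e=d$ is congruent modulo $I_R$ to an element of $R\,1$. If $\Theta^\i$ is a product of two or more of the $\Theta(x_j)$, then each factor has Kazhdan degree $<d$ and so lies in $R\,1+I_R$ by the inductive hypothesis, whence, the quotient being commutative, so does $\Theta^\i$. The only remaining case is $\Theta^\i=\Theta(x_k)$ with $n_k=d-2$. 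If $x_k=e$ then $\Theta(e)\equiv\tfrac{1}{2}(q-c)\,1\pmod{I_R}$ by the definition of $I_R$, using $\tfrac{1}{2}\in R$ since $\Phi$ is not of type $\rm A$. If $k\ge2$ then $x_k\in[\g_R^e,\g_R^e]\cap\g_R(n_k)$, and as $[\g_R^e,\g_R^e]$ is a graded $R$-submodule we may write $x_k=\sum_i[u_i,v_i]$ with $u_i,v_i$ homogeneous elements of $\g_R^e$ of degrees summing to $n_k$. By Proposition~\ref{P:Theta-k} the element $h_1:=\sum_i[\Theta(u_i),\Theta(v_i)]$ belongs to $U(\g_R,e)$, and being a sum of commutators it belongs to $I_R$. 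Running the reduction argument from the proof of Proposition~\ref{P:Theta-k} (which rests on \cite[Theorem~4.6(iv)]{Sasha1} and the formula on \cite[p.~27]{Sasha1}), the difference $h_1-\Theta(x_k)$ is an $R$-linear combination of monomials $\Theta^{\bf a}$ with either $|{\bf a}|_e<d$, or $|{\bf a}|_e=d$ and $|{\bf a}|\ge2$; each such $\Theta^{\bf a}$ is congruent modulo $I_R$ to an element of $R\,1$ by the inductive hypothesis and the previous case, so $\Theta(x_k)\in R\,1+I_R$. This completes the induction, and combined with $R\,1\cap I_R=0$ it gives $U(\g_R,e)=R\,1\oplus I_R$; then $I_R\cong U(\g_R,e)/R\,1\cong\bigoplus_{\i\ne{\bf 0}}R\,\Theta^\i$ is a free $R$-module.

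I expect the crux of the proof to be the properness of $I_R$, i.e. verifying that the constant $\tfrac{1}{2}(q-c)$ built into the definition of $I_R$ is precisely the value of $\Theta(e)$ on the unique one-dimensional representation of $U(\g_\C,e)$. This forces one to identify that representation, through Skryabin's equivalence, with the explicit multiplicity-free primitive ideal $\tilde I$ constructed in \cite{Pr14} and to read off the associated Casimir eigenvalue --- exactly the place where the results of \cite{PT14}, \cite{Pr14} and Losev on multiplicity-free primitive ideals are indispensable. The remainder is a routine adaptation of the proofs of Propositions~\ref{P:Theta-k} and \ref{P:idealI1}.
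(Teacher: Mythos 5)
Your reduction showing $U(\g_R,e)=R\,1+I_R$ is sound and is essentially the paper's argument: expanding $h_1-\Theta(x_k)$ in the PBW basis of Theorem~\ref{T:part(i)}(i) and disposing of each monomial by the inductive hypothesis is a mild streamlining of the iterative clearing procedure of Propositions~\ref{P:Theta-k} and \ref{P:idealI1}, and your choice of basis with $x_1=e$ is harmless. The genuine gap is in the properness argument. You assert that $U(\g_\C,e)$ has a \emph{unique} one-dimensional representation $\phi$, citing \cite[Theorem~2]{PT14} and \cite[Theorem~A]{Pr14}; but uniqueness is exactly what fails for the orbits in Table~1. For these orbits $\g_\C^e=\C e\oplus[\g_\C^e,\g_\C^e]$, so the abelianisation of $U(\g_\C,e)$ is generated by the image of $\Theta(e)$ and is strictly larger than $\C$: there are several characters, distinguished precisely by their value on $\Theta(e)$, equivalently by their Casimir eigenvalue. (This is the whole reason the normalising constant $\frac{q-c}{2}$ has to be built into the definition of $I_R$, whereas no such constant appears in Proposition~\ref{P:idealI1}.) Your fallback step ``by \cite{Pr14} this ideal is $\tilde{I}$'' silently invokes uniqueness of the multiplicity-free primitive ideal with associated variety $\overline{\O_\C(I,J)}$; that uniqueness is available for special orbits by \cite[Theorem~B]{Pr14}, but the Table~1 orbits are exactly the non-special rigid ones. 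So you have not actually produced a character of $U(\g_\C,e)$ taking the value $\frac{q-c}{2}$ on $\Theta(e)$, and without that $I_R$ could a priori be the whole algebra, collapsing the direct sum decomposition.

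The paper closes this gap by arguing in the opposite direction: it starts from the explicit multiplicity-free primitive ideal $\tilde{I}=\Ann_{U(\g_\C)}L(\lambda)$ constructed by direct computation in \cite{Pr14}, and applies Losev's multiplicity formula from \cite{Lo5} (together with \cite{Lo2}, \cite{gi} and \cite{Pr10}) to conclude that $\tilde{I}=\Ann_{U(\g_\C)}\big(Q_\C\otimes_{U(\g_\C,e)}V\big)$ for \emph{some} one-dimensional module $V$. The character of that $V$ then automatically sends $C$ to $q$, hence $\Theta(e)$ to $\frac{q-c}{2}$ by \eqref{e:Casimirequation}, and its kernel $I_\C$ contains $I_R$, giving properness. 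Once this is in place, the rest of your write-up goes through.
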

\begin{proof}
Combining \cite{Lo2, Pr10, gi} with Losev's multiplicity formula proved in \cite{Lo5}
we obtain that $$\tilde{I}\,=\,{\rm Ann}_{U(\g_\C)}\,\big(Q_\C\otimes_{U(\g_\C,e)}\,V\big)$$
for some one-dimensional $U(\g_\C, e)$-module $V$. Let $I_\C$ denote the annihilator of $V$ in $U(\g_\C,e)$. This is a two-sided ideal of codimension $1$ in $U(\g_\C,e)$ which contains $C-q\,1$ and all commutators
$[u,v]$ with $u,v\in U(\g_R,e)$. By Proposition~\ref{P:R-derived}(iii), $\g_R^e(0)$ is perfect. In conjunction with Lemma~\ref{L:JEMS} this yields $C_0\in I_\C$. By \eqref{e:Casimirequation} we have $\Theta(e) -\frac{q-c}{2} - C_0 \in I_\C$ and $I\subseteq I_\C\cap U(\g_R,e)$ follows.
 
We claim that $I_R=I_\C\cap U(\g_R,e)$ and $U(\g_R, e)\,=\,R\,1\oplus I_R$.
Due to Theorem~\ref{T:part(i)}(i), the claim will follow if we show that for every $x\in\g_R^e(n_k)$ there exists $c_x\in R$ such that
$\Theta(x)-c_x\,1\in I_R$. 
 We again argue by induction on $n_k$.
Since Proposition~\ref{P:R-derived}(iii) entails  
that $\g_R(i)=[\g_R(0),\g_R(i)]$ for $i\in\{0,1\}$  Lemma~\ref{L:JEMS} yields $\Theta(\g_R(0))\cup \Theta(\g_R(1))\subset I_R$. 
Therefore, our claim holds for $n_k\le 1$.

 Suppose that $n_k\ge 2$ and for every $y\in \g_R^e(n_i)$ with $n_i<n_k$ there exists $c_y\in R$ such that $\Theta(y)-c_y\,1\in I$. Let $x\in\g_R^e(n_k)$.
By Proposition~\ref{P:R-derived}(iii), 
$x=  \mu e+\sum_{i=1}^t\,[u_i,v_i]$ for some $\mu\in R$, $u_i\in\g_R^e(a_i)$, $v_i\in\g_R^e(b_i)$, where $a_i, b_i\in \Z_+$ and $a_i+b_i=n_k$. We again  apply the argument from the proof of Proposition~\ref{P:Theta-k}.
As our present setting is a bit different,
at each step of the reduction process we use $\Theta(y)-c_y\,1$ with $y\in [\g_R^e,\g_R^e](n_i)$ and $\Theta(e)-\frac{q-c}{2}\,1$ instead of 
$\Theta(y)$ and $\Theta(e)$. As in the proof of Proposition~\ref{P:idealI1} the iterations will stop at an element of $\F_0 U(\g_R,e) = \k$, implying that
$\Theta(x)-c_x\,1\in I_R$ for some $c_x\in R$.
This completes the proof.
 \end{proof}
\subsection{Final remarks and open problems} Theorem~\ref{main1}(3) is an immediate consequence of Theorem~\ref{T:part(i)}(iii) and Propositions~\ref{P:idealI1} and \ref{P:idealI2}. Theorem~\ref{main} follows from  Proposition~\ref{P:reductionrigid}, Proposition~\ref{P:reductionprop} and Theorem~\ref{main1}. 
\begin{Remark}
The argument used in the proof of Proposition~\ref{P:idealI1} shows that
if $e=e_{I,J}$ is rigid and not listed in Table~1 then  
$U_\chi(\g_\k,e_\k)$ affords a unique one-dimensional representation.
Hence the reduced enveloping algebra $U_{\chi}(\g_\k)$
has a unique module of dimension $p^{d(\chi)}$.
\end{Remark}
\begin{Remark}\label{rem}
If $e=e_{I,J}$ is rigid and special in the sense of Lusztig then $e$ is not listed in Table~1. For such an element one 
can determine the central character of the unique $U_\chi(\g_\k)$-module of dimension $p^{d(\chi)}$ by using the Arthur--Barbasch--Vogan recipe (with a subsequent reduction modulo $p$). Indeed, let $(e^\vee,h^\vee, f^\vee)$ be an $\sl_2$-triple in the Langlands dual Lie algebra $\g_\C^\vee$ such that $e^\vee\in\g_\C^\vee$ corresponds to $e$ under the Lusztig--Spaltenstein duality map and $h^\vee$ lies in the Weyl chamber of $\t^*_\mathbb{R}$ associated with $\Pi$. By \cite[Theorem~B]{Pr14}, the unique multiplicity-free primitive ideal of $U(\g_\C)$ whose associated variety coincides with the Zariski closure of $\O_\C(I,J)$ has form $\tilde{I}={\rm Ann}_{U(\g_\C)}\,L(\lambda)$ where $\lambda=\frac{1}{2}h^\vee-\rho$. It is well-known that $\lambda\in \t_\Z^*$.
By Losev's multiplicity formula, mentioned earlier, there exists a one-dimensional $U(\g_\C,e)$-module $V_0$ such that  $\tilde{I}={\rm Ann}_{U(\g_\C)}\big(Q_\C\otimes_{U(\g_\C,e)} V_0\big)$. 
Recall that the centre of $U(\g_\C,e)$ identifies canonically with $Z(U(\g_\C))$; see \cite[p.~524]{Pr07a}, for example. 
Let $I_\C$ be 
the unique two-sided ideal of codimension $1$ in $U(\g_\C,e)$.
Then
$V_0\,\cong\,U(\g_\C,e)/I_\C$ as $U(\g_\C,e)$-modules.
The proof of Proposition~\ref{P:idealI1} shows that the augmentation ideal $I_R$ of $U(\g_R,e)$ coincides with $I_\C\cap U(\g_R,e)$ and $I_\k=I_R\otimes_R \k$ is the only ideal of codimension $1$ in $U(\g_\k,e_\k)$. It follows that $I_\k$ contains the ideal $J_\chi$ of $Z_p(\g_\k,e_\k)$ implying that $$V_{0\,,\k}:=\,(U(\g_R,e)/I_R)\otimes_R\k\,\cong\,U(\g_\k,e_\k)/I_\k$$ is a $1$-dimensional $U_\chi(\g_\k,e_\k)$-module.
The above discussion shows that  $I_\C\cap Z(U(\g_\C))$ 
 coincides with the kernel of the central character corresponding to the $W(\Phi)$-orbit of $\frac{1}{2}h^\vee\in\t_\Z^*$. 
As a consequence, the Harish-Chandra centre $U(\g_\k)^{\Ad\,G_\k}\cong\, U(\g_\Z)^{\Ad\,G_\Z}\otimes_\Z \k$ acts on the unique small $U_\chi(\g_\k)$ module $V=\,Q_{\k,\,\chi}\otimes_{U_\chi(\g_\k,e_\k)}\,V_{0,\,\k} $ through the character associated with the $W(\Phi)$-orbit of the image of 
$\frac{1}{2}h^\vee$ in $\t^*_{\mathbb{F}_p}=\,\t^*_\Z\otimes_\Z\mathbb{F}_p$.
Since the centre of $U(\g_\k)$ is generated by $U(\g_\k)^{\Ad\,G_\k}$ and $Z_p(\g_\k)$ this determines
the action of the centre of $U(\g_\k)$ on $V$. We refer to \cite[9.6]{Ja98} and references therein for more detail on the structure of the centre of $U(\g_\k)$.
\end{Remark}
\begin{Remark}
	Combining Proposition~\ref{P:arprop} and Lemma~\ref{L:JEMS} with the proof of Theorem~\ref{T:part(i)} it is straightforward to see that for $p>2$ the first part of Theorem~\ref{main1} is true for all almost rigid nilpotent elements $e$ in the orthogonal and symplectic Lie algebras. In order to prove the second part of Theorem~\ref{main1} in this case it would be important to determine the central character of the unique one-dimensional $U(\g_\C,e)$-module stable under the action of the component group of $G_\C^e$. At the time of writing this seems to be an open problem for {\it non-special} almost rigid nilpotent elements in $\so_N(\C)$ and $\sp_{N}(\C)$.
\end{Remark}

\begin{Remark}
Suppose $\g=\g_\k$ and $\chi=\kappa(e_\k,-)$.	
Following \cite{PS99} we denote by $\z$ the coadjoint stabiliser $\g^\chi$ and write $\F(\g,\z)$ for the (restricted) coadjoint $\g$-module
${\rm Hom}_{U_0(\z)}(U_0(\g),\k)$. The comultiplication
of $U(\g)$ endows $\F(\g,\z)$ with a commutative associative $\k$-algebra structure and the Lie algebra $\g_\k$ acts on $\F(\g,\z)$ as derivations. Let $S_\chi(\g)$ denote the quotient of the symmetric algebra $S(\g)$ by its ideal generated by all $(x-\chi(x))^p$ with $x\in \g$. Clearly, $S_\chi(\g)$ is a local $\k$-algebra of dimension $p^{\dim \g}$ and $\g$ acts on $S_\chi(\g)$ as derivations. 
By \cite[Proposition~3.4 and Theorem~3.2]{PS99}, the algebra $S_\chi(\g)$ has a unique maximal $\g$-invariant ideal $\I$ and 
$S_\chi(\g)/\I\,\cong\,
\F(\g,\z)$ as $\k$-algebras and $\g$-modules.  
The Lie--Poisson structure of $S(\g)$ induces a natural Poisson structure on $S_\chi(\g)/\I$ and the Lie algebra $\g$ acts on $S_\chi(\g)/\I$ by Poisson derivations.	

Combining the above with Theorem~\ref{main} and \cite[\textsection 2]{PS99} we can prove that there is an associative $\k[t]$-algebra $\mathcal{A}$ which enjoys the following properties:
\begin{itemize}	
\item[(1)\,]	 $\mathcal{A}$ is a free $\k[t]$-module of rank $p^{2d(\chi)}$;

\smallskip
	
\item[(2)\,] $\mathcal{A}_c:=\mathcal{A}/(t-c)\mathcal{A}$ is isomorphic to
the matrix algebra ${\rm Mat}_{p^{d(\chi)}}(\k)$ for every $c\in \k^\times$;

\smallskip

\item[(3)\,] $\mathcal{A}_0:= \mathcal{A}/t\mathcal{A}$ is isomorphic $\F(\g,\z)$ as $\k$-algebras and $\g$-modules;

\smallskip

\item[(4)\,] $\g$ acts on each matrix $\k$-algebra $\mathcal{A}_c$ with $c\ne 0$ by inner derivations.

\smallskip

\item[(5)\,] $\g$ acts on $\mathcal{A}_0$ by Poisson derivations.

\end{itemize}
Our proof, outlined below,  makes use of some properties of the $\k[t]$-algebra $U_{\chi,\,t^2}(\g)$ introduced in \cite[\textsection 2]{PS99}.	
More precisely, it relies on 
the fact that the above-mentioned Poisson structure on $S_\chi(\g)$  
can be obtained by taking commutators in $U_{\chi,\,t^2}(\g)$, dividing the result by $t^2$ and picking the remainder in $U_{\chi,\,t^2}(\g)/tU_{\chi,\,t^2}(\g)\cong S_\chi(\g)$.

In order to obtain $\mathcal{A}$ one invokes the diagonalisable automorphism $\tau$ of the $\k[t,t^{-1}]$-algebra $U_{\chi,\,t^2}(\g)[t,t^{-1}]$ such that $\tau(x)=t^{i+2}x$ for all $x\in \g(i)\subset U_{\chi,\,t^2}(\g)$ (and all $i\in \Z$) and applies it to the annihilator of a small
$U_\chi(\g)$-module regarded as a $\k$-subspace of  $U_{\chi,\,t^2}(\g)$, call it $\mathcal{J}_\chi$. The discussion in \cite[2.1]{PS99} shows that this way one obtains a two-sided ideal
$\mathcal{J}_{\chi,\,t}$ of  $U_{\chi,\,t^2}(\g)[t,t^{-1}]$.
Using a $\k$-basis of $\mathcal{J}_\chi$ compatible with the (finite)  Kazhdan $\Z$-filtration of $U_\chi(\g)$
it is straightforward to see that the $\k[t]$-module 
$\mathcal{J}_{\chi,\,t}\cap U_{\chi,\,t^2}(\g)$ is a direct summand of $U_{\chi,\,t^2}(\g)$.
As a result, the $\k[t]$-algebra $$\mathcal{A} = U_{\chi,\,t^2}(\g)/\big(\mathcal{J}_{\chi,\,t}\cap U_{\chi,\,t^2}(\g)\big)$$ arises as a quantisation of the Poisson algebra
$\F(\g,\z)$. We stress that the construction of $\mathcal {A}$ depends on the choice of a small
$U_\chi(\g)$-module.

Let $M$ be the maximal ideal of
the coset of identity in $G/G^\chi$ and write $M^{(p)}$ for the ideal of the coordinate ring $\k[G/G^\chi]$ generated by all $\varphi^p$ with $\varphi\in M$. When $\g^\chi=\Lie(G^\chi)$ (which holds in our setting) we have that
$\F(\g,\z)\cong\k[G/G^\chi]/M^{(p)}$ as $\k$-algebras and $\g$-modules.
This indicates that small representations with $p$-character $\chi$ might play a role in the problem of quantising the function algebras on 
homogeneous spaces $G/G^\chi$ in good characteristic and (possibly) over $R$.
\end{Remark}


\begin{Remark}
It would be interesting to obtain more explicit realisations of small $U_\chi(\g_\k)$-modules. In principle, this can be done by following the procedure described in \cite[\textsection 3]{Pr07b}. 

We start with $\lambda\in \t_R^*$ and observe that the irreducible highest weight module $L_\C(\lambda)$ for the Lie algebra $\g_\C$ has a nice $R$-form, 
$L_R(\lambda)$, generated over $U(\g_R)$ by a highest weight vector $v_0$ of $L_\C(\lambda)$; see \cite[2.2]{Pr07b}. Let  
$L_p(\lambda):=L_R(\lambda)\otimes_R\k$. If $\lambda$ is not an integral dominant weight then the $\g_\k$-module $L_p(\lambda)$ is infinite-dimensional, but
this can be remedied by picking
a linear function $\chi\in\g_\k^*$ and letting
$$L^\chi_p(\lambda)\,:=\,L_p(\lambda)
\otimes_{Z_p(\g_\k)}\k_\chi.$$
 Let $\bar{v}_0$ denote the image of $v_0$ in $L_p(\lambda)$ and write $J_p(\lambda)$ for the annihilator of $\bar{v}_0$ in $Z_p(\g_\k)$. Let ${\rm V}_p(\lambda)\subset \g^*_\k$
denote the zero locus of the ideal $J_p(\lambda)$. By \cite[Lemma~3.1]{Pr07b}, the $U_\chi(\g_\k)$-module $L^\chi_p(\lambda)$ is nonzero for every $\chi\in {\rm V}_p(\lambda)$.

Suppose that the open orbit of the associated variety of $I(\lambda)={\rm Ann}_{U(\g_\C)}\, L_{\C}(\lambda)$ contains a nilpotent element $e_{I,J}$ and write $\O_\k(I,J)$ for the corresponding nilpotent orbit in $\g_\k$. Let
${\rm V}_p^{\circ}(\lambda)$ denote the set of all $\chi\in {\rm V}_p(\lambda)$ such that $\chi=\kappa(e,-)$
for some $e\in \O_\k(I,J)$. 
By \cite[Corollary~3.1]{Pr07b}, there exists a positive integer $D=D(\lambda)$ such that for every prime $p> D$ the set ${\rm V}^\circ_p(\lambda)$ is nonempty. Furthermore, 
if $\chi\in {\rm V}_p^\circ(\lambda)$ then $\dim L^\chi_p(\lambda)=kp^{d(\chi)}$ for some $k\le D$.


Let $\chi=\kappa(e,-)$ and suppose $p$ is a good prime for $G_\Z$. 
Let $\Lambda_{I,J}$ denote the set of all $\lambda\in \t_R^*$ such that the primitive ideal $I(\lambda)$ coincides with the annihilator of the irreducible $\g_\C$-module $Q\otimes_{U(\g_\C,\, e_{I,J})}\C_\eta$ where $\eta$ is a one-dimensional representation of $U(\g_\C, e_{I,J})$.
We conjecture that for a suitable choice of  $e\in \O_\k(I,J)$ any small $U_\chi(\g_\k)$-module 
appears as a composition factor of one of the modules 
$L_p^\chi(\lambda)$ with $\lambda\in \Lambda_{I,J}$ and $\chi\in {\rm V}_p^\circ(\lambda)$. If this conjecture were true one would be able to 
study $U_\chi(\g)$-modules under mild assumptions on $p$ by applying to small modules various translation functors.
\end{Remark}

\end{document}